\pgfplotsset{compat=1.18}
\numberwithin{equation}{section} 
\newtheorem{theorem}{Theorem}[section]
\newtheorem{lemma}[theorem]{Lemma}
\newtheorem{remark}[theorem]{Remark}%
\newtheorem{assumption}[theorem]{Assumption}
\newtheorem{definition}[theorem]{Definition}
\newcommand{\half}{\frac{1}{2}}
\newcommand{\R}{\mathbb{R}}
\newcommand{\prob}{\mathbb{P}}
\newcommand{\cP}{\mathcal{P}}
\newcommand{\cT}{\mathcal{T}}
\newcommand{\polOrder}{\ell}
\newcommand{\localVEMSpace}{V_{h,\polOrder}^K}
\newcommand{\vemSpace}{V_{h,\polOrder}}
\newcommand{\enlargedVemSpace}{\widetilde{V}_{h,\polOrder}}
\newcommand{\localenlargedVemSpace}{\widetilde{V}_{h,\polOrder}^K}
\newcommand{\HTwoNCSpace}{H^{2,nc}_{\polOrder}(\cT_h)}
\newcommand{\valueProj}{\Pi^K_0}
\newcommand{\gradProj}{\Pi^K_1}
\newcommand{\hessProj}{\Pi^K_2}
\newcommand{\edgeProj}{\Pi^e_0}
\newcommand{\edgeNormalProj}{\Pi^{e}_1}
\newcommand{\edgeValueDofs}{d_0^e}
\newcommand{\innerDofs}{d_0^i}
\newcommand{\globalInterpolation}{{I}_h}
\newcommand{\Confinterpolation}{{I}^1_h}
\newcommand{\stabilisation}{S^K}
\newcommand{\vertiii}[1]{{\left\vert\kern-0.25ex\left\vert\kern-0.25ex\left\vert #1 
    \right\vert\kern-0.25ex\right\vert\kern-0.25ex\right\vert}}
\newcommand{\red}[1]{\textcolor{red}{#1}}
\newcommand{\ellipticProj}{P_h}
\newcommand{\ltwo}{\cP^K}
\providecommand{\keywords}[1]
{
  \small	
  \textbf{Keywords:} #1
}
\newcommand{\bh}{b_h}
\newcommand{\ahlower}{m_h}
\newcommand{\ahlowerLocal}{m_{h}^K}
\newcommand{\ahhessian}{a_h}
\newcommand{\ahhessianLocal}{a_{h}^K}
\newcommand{\rhLocal}{r_{h}^K}
\newcommand{\rh}{r_h}
\newcommand{\mesh}{\cT_h}
\newcommand{\eps}{\varepsilon}
\newcommand{\dx}{\mathrm{d}x}
\newcommand{\ds}{\mathrm{d}s}
\newcommand{\nonconformity}{\mathcal{N}}
\newcommand{\element}{K}
\providecommand{\keywords}[1]
{
  \small	
  \textbf{Keywords:} #1
}
\title{A higher order nonconforming virtual element method for the Cahn-Hilliard equation}
\author{Andreas Dedner\thanks{Department of Mathematics, University of Warwick, Coventry, CV4 7AL, UK. Email: \href{mailto:a.s.dedner@warwick.ac.uk}{a.s.dedner@warwick.ac.uk}} \
and Alice Hodson\thanks{Corresponding author. Charles University, Faculty of Mathematics and Physics, Sokolovsk\'{a} 83, 186 75, Praha, Czech Republic. Email: \href{mailto:hodson@karlin.mff.cuni.cz}{hodson@karlin.mff.cuni.cz}} 
}
\date{}
\begin{document}

\maketitle

\begin{abstract}
    In this paper we develop a fully nonconforming virtual element method (VEM) of arbitrary approximation order for the two dimensional Cahn-Hilliard equation. 
    We carry out the error analysis for the semidiscrete (continuous-in-time) scheme and verify the theoretical convergence result via numerical experiments. 
    We present a fully discrete scheme which uses a convex splitting Runge-Kutta method to discretize in the temporal variable alongside the virtual element spatial discretization.
\end{abstract}

\hspace{10pt}

\keywords{virtual element method; Cahn-Hilliard equation; nonlinear; fourth-order problems; nonconforming; DUNE}

\setboolean{thesis}{false}
\setboolean{thesiscorrections}{false}
\setboolean{JSCcorrections}{false}
\setboolean{arxiv}{true}

\ifthenelse{\boolean{thesiscorrections}}{
        \newcommand{\corrections}[1]{\textcolor{red}{#1}}}
    {
        \newcommand{\corrections}[1]{\textcolor{black}{#1}}
    }

\ifthenelse{\boolean{JSCcorrections}}{
        \newcommand{\jsccorrections}[1]{\textcolor{red}{#1}}}
    {
        \newcommand{\jsccorrections}[1]{\textcolor{black}{#1}}
    }

\section{Introduction}\label{sec: intro}
Let $\Omega \subset \R^2$ denote a polygonal domain, with boundary $\partial \Omega$ and outward pointing normal $n$. Originally introduced by Cahn and Hilliard in \cite{cahn1961spinodal,cahn1958free} to model the phase separation of a binary alloy, we consider the following two dimensional Cahn-Hilliard problem: find $u(x,t) : \Omega \times [0,T] \rightarrow \R$ such that 
\begin{subequations}\label{eqn: CH eqn}
    \begin{alignat}{3}
        &\partial_t u - \Delta ( \phi(u) - \eps^2 \Delta u ) = 0 \quad &&\text{ in } \Omega \times (0,T],&& 
        \\
        &u(\cdot,0) = u_0(\cdot) \quad &&\text{ in } \Omega,&&
        \\
        &\partial_n u  =  0, \, \partial_n (\phi(u)- \eps^2 \Delta u) = 0 \qquad &&\text{ on } \partial \Omega \times (0,T],&&
    \end{alignat}
\end{subequations} 
for time $T>0$. We use the notation $\partial_n$ for denoting the normal derivative and $\eps>0$ to represent the interface parameter. We define $\phi (x) = \psi^{\prime} (x)$, where the free energy $\psi : \R \rightarrow \R$ is defined as
\begin{align}
    \psi(x) := \frac{1}{4} (1-x^2)^2.
\end{align}

As described in \cite{cahn1961spinodal,cahn1958free,elliott1989cahn}, phase separation is a physical phenomenon occurring when a high temperature mixture is cooled down quickly and the two or more components in the mixture separate into regions of each one.
As well as being used as a model for this type of phenomena, Cahn-Hilliard type equations have been used in a wide range of problems such as image processing \cite{dolcetta2002area},  for example.
Due to the numerous applications of the Cahn-Hilliard equation, there has been a lot of attention and research dedicated to numerical methods for \corrections{this} problem.

Many \corrections{classical} methods used to solve the Cahn-Hilliard equation  have been finite element (FE) based methods.
These approaches can be split into two types.
In the first, the equations are reformulated in mixed form, resulting in a system of second order problems which can be solved using classical methods suitable for elliptic problems, e.g., \cite{elliott1989second,kay2009discontinuous}. 
The second approach involves solving the equations directly in their weak form, requiring the use of second order derivatives of the finite element functions. 
Solving the variational formulation of the fourth-order problem directly using FE methods is not straightforward due to the higher regularity requirements which have to be imposed on the finite element basis functions. 
Fully conforming methods \cite{elliott1987numerical,elliott_cahn-hilliard_1986} require a large number of degrees of freedom even for the lowest order approximation or are based on sub-triangulation. 
An easier approach is based on using suitable nonconforming spaces and to possibly include stabilization terms to achieve stability \cite{choo2005discontinuous,wells2006discontinuous}.
A few nonconforming spaces have been suggested which have sufficient regularity to be stable without extra penalty terms 
\cite{elliott1989nonconforming,zhang2010nonconforming}.
However, higher order versions of these spaces are not easily obtained. 
Consequently, there are few methods for fourth-order problems readily available in software packages, with most only providing the lowest order Morley element for these types of problems. 

More recently, we have seen a handful of virtual element methods to discretize the Cahn-Hilliard equation \cite{antonietti_$c^1$_2016,liu2019virtual,liu2020fully}. The virtual element method is an extension and generalization of both finite element and mimetic difference methods. 
First introduced for second order elliptic problems in \cite{beirao_da_veiga_basic_2013}, virtual elements are highly desirable due to the straightforward way in which they extend to general polygonal meshes. 
The virtual element method is incredibly versatile and as such has been applied to a wide range of problems; for example, the development of higher order continuity spaces \cite{antonietti2021review,da2014virtual}, even in arbitrary dimensions \cite{huang2021h}, suitable for e.g. the approximation of polyharmonic problems \cite{antonietti_conforming_2018} as well as the construction of pointwise divergence-free spaces for the Stokes problem \cite{da_veiga_divergence_2015}.
Other methods which have been considered for the discretization of problem \eqref{eqn: CH eqn} include the hybrid high-order (HHO) method (see \cite{di2015hybrid} where the method was first introduced for a linear elasticity problem) as well as isogeometric analysis \cite{gomez2008isogeometric,kastner2016isogeometric}. 
In \cite{chave2016hybrid} an HHO approximation of the Cahn-Hilliard equation in mixed form is considered and, like the VEM approach, extends easily to general polygonal meshes. 

A VEM discretization for problem \eqref{eqn: CH eqn} is considered in \cite{antonietti_$c^1$_2016} where a $C^1$-conforming method is presented with only 3 degrees of freedom (dof) per vertex. 
Another conforming approach is considered in \cite{liu2019virtual} however the problem is formulated in mixed form. 
Both works present a semidiscrete (continuous-in-time) convergence result for the lowest order VEM space. 
\corrections{The only other applications of the virtual element method to the Cahn-Hilliard equation are seen in the following two works. In \cite{liu2020fully} a fully discrete scheme is presented and is shown to satisfy both a discrete energy law and a mass conservation law.
A numerical exploration of Cahn-Hilliard type equations is presented in \cite{antonietti2021c}.}
We also note that a $C^1$-conforming virtual element method for the high-order phase-field model is considered in \cite{adak2024ac}, however, only a linear fourth-order PDE is studied. In contrast to the virtual element discretizations developed in \cite{antonietti_$c^1$_2016,liu2019virtual,liu2020fully}, we present the first analysis of a higher order VEM method and achieve optimal order error estimates for the semidiscrete scheme. 
A clear advantage of using higher order methods is the ability to use considerably lower grid resolutions which in turn can avoid the use of strongly adapted grids.   


The aim of this paper is to present a new nonconforming virtual element method for the discretization of the Cahn-Hilliard equation. 
Our approach for constructing the VEM space follows \cite{10.1093/imanum/drab003}.
We show that by defining the projection operators without using the underlying variational problem, we can directly apply our method to nonlinear fourth-order problems. 
Consequently, our approach does not require any special treatment of the nonlinearity as in \cite{antonietti_$c^1$_2016}. 
Our method is shown to converge with optimal order also in the higher order setting.   
This projection approach has been implemented within the DUNE software framework \cite{dunegridpaperII,dedner2010generic} requiring little change to the existing code base.
To the best of our knowledge, this is the first analysis of a nonconforming virtual element method for the Cahn-Hilliard equation as well as the only higher order method without using a mixed formulation of \eqref{eqn: CH eqn}. 

This paper is organized in the following way. 
In Section~\ref{sec: cts problem} we introduce the weak form of the continuous fourth-order Cahn-Hilliard problem, followed by the virtual element method discretization in Section~\ref{sec: discrete problem}. 
In Section~\ref{sec: error anal} we carry out the error analysis of the continuous-in-time scheme before presenting numerical experiments to verify the theoretical results in Section~\ref{sec: numerical testing}.
Finally, we give proofs of some technical lemmas in Appendix~\ref{sec: appendix}.
\section{Problem formulation}\label{sec: cts problem}
We begin by introducing the variational formulation of problem \eqref{eqn: CH eqn} before introducing notation and some technicalities needed for the rest of the paper. 

\subsection{The continuous problem}
First, we introduce the following space 
\begin{align*}
    V = \{ v \in H^2(\Omega) : \partial_n v = 0 \text{ on } \partial \Omega \}.
\end{align*}
Then, the variational form for \eqref{eqn: CH eqn} is described as follows: \corrections{for a.e. $t \in (0,T)$} find $u(\cdot,t) \in V$ such that 
\begin{align}\label{eqn: cts weak form}
    \begin{split}
        &(\partial_t u,v) + \eps^2 a(u,v) + r(u;u,v) = 0 \quad \forall \,v \in V \\
        &u(\cdot,0) = u_0(\cdot) \in V
    \end{split}
\end{align}
where the bilinear form $a(\cdot,\cdot)$ is the standard hessian form arising in the study of fourth-order problems
\begin{align*}
    a(v,w) = (D^2 v,D^2w) 
    &= \int_{\Omega} (D^2 v) : (D^2 w) \, \mathrm{d} x 
    = \int_{\Omega} \sum_{i,j=1}^2 \frac{\partial^2 v}{\partial x_i \partial x_j} \frac{\partial^2 w}{\partial x_i \partial x_j} \, \mathrm{d} x,
    \intertext{and the semilinear form $r(\cdot;\cdot,\cdot)$ is defined as}
    r(z;v,w) &= \int_{\Omega} \phi^{\prime} (z) \, D v \cdot D w \, \dx
\end{align*}
for all $z,v,w \in V$.
Existence and uniqueness of a solution $u$ to problem \eqref{eqn: cts weak form} where $u \in L^{\infty} (0,T;V) \cap L^2(0,T;H^4(\Omega))$, and $u_t \in L^2(0,T;L^2(\Omega))$ can be found in e.g. \cite{elliott1989nonconforming}. 
\corrections{Further, higher regularity of $u_t$ can be derived, provided that $-\Delta^2 u_0 + \Delta \phi(u_0) \in V$ (see e.g. \cite{elliott_cahn-hilliard_1986}).}

Note that we can view the Cahn-Hilliard equation as the $H^{-1}$ gradient flow of the following free energy functional
\begin{align}\label{eqn: energy functional}
    E(u) = \int_{\Omega} \Big( 
        \psi(u) + \frac{\eps^2}{2} |\nabla u|^2 
        \Big) \, \dx,
\end{align}
and notice that the total energy $E(u)$ decreases with time, $\frac{d}{dt} E(u(t)) \leq 0$.

\subsection{Basic spaces and notation}
Throughout this paper we use the notation $a \lesssim b$ to mean that $a \leq Cb$ for some constant $C$, which does not depend on $h$.

We denote the space of polynomials of degree less than or equal to $\polOrder$ on a set $\element \subseteq \R^2$ by $\prob_{\polOrder}(\element)$. 
We denote a decomposition of the space $\Omega$ by $\cT_h$ and let $h_K$ denote the diameter of a polygon $\element \in \cT_h$ where $\text{diam}(\element) = \max_{x,y \in \element} \| x-y\|$. 
We will denote the edges of a polygon $\element \in \cT_h$ by $e \subset \partial \element$ and denote the set of all edges in $\cT_h$ by $\mathcal{E}_h = \mathcal{E}_h^{\text{int}} \cup \mathcal{E}_h^{\text{bdry}}$, split into boundary and internal edges respectively.
Similarly, denote the set of vertices in $\cT_h$ by $\mathcal{V}_h = \mathcal{V}_h^{\text{int}} \cup \mathcal{V}_h^{\text{bdry}}$.

For an integer $s>0$, define the \emph{broken Sobolev space} $H^s(\mathcal{T}_h)$ by 
\begin{align*}
    H^s(\mathcal{T}_h) := \{  v \in L^2(\Omega) : v|_K \in H^s(\element), \ \forall \, \element \in \mathcal{T}_h  \},
\end{align*} 
and on this space define the 
inner product $(v_h,w_h)_{s,h} := \sum_{\element \in \mesh} (D^s v_h,D^s w_h)_K$, with the broken $H^s$ seminorm
\begin{align*}
    | v_h |^2_{s,h} := \sum_{\element \in \cT_h} |v_h|^2_{s,\element}.
\end{align*}     

For a function $v \in H^2(\cT_h)$ we define the jump operator $[ \cdot ]$ across an edge $e \in \mathcal{E}_h$ as follows. For an internal edge, $e \in \mathcal{E}_h^{\text{int}}$, define $[ v ] := v^+ - v^- $ where $v^{\pm}$ denotes the trace of $v|_{\element^{\pm}}$ where $e \subset \partial \element^{+} \cap \partial \element^{-}$. For boundary edges, $e \in \mathcal{E}_h^{\text{bdry}}$, let $[v] := v|_e$.

\begin{definition}\label{defn: H2 nonconforming space}
    We define the \emph{$H^2$-nonconforming space} as follows.
    \begin{align*}
        \HTwoNCSpace := \{ v \in H^2(\cT_h) : &\ v \text{ is continuous at the vertices, } 
        \\
        &\int_e [v]p \, \ds = 0\ \forall \, p \in \prob_{\polOrder-3}(e), \ \forall e \in \mathcal{E}_h^{\text{int}},
        \\ 
        &\int_e [\partial_n v]p \, \ds = 0 \ \forall \, p \in \prob_{\polOrder-2}(e),\ \forall e \in \mathcal{E}_h \}.
    \end{align*}
\end{definition}     

We also \jsccorrections{assume} the following regularity conditions on the mesh $\cT_h$. 
Note that the following assumption is standard in the VEM framework (see e.g. \cite{beirao_da_veiga_basic_2013}).
\begin{assumption}[Mesh assumptions]\label{assumption: mesh regularity} Assume there exists some $\rho >0$ such that the following holds. 
    \begin{itemize}
        \item For every element $\element \in \mesh$ and every $e \subset \partial \element$, $h_{e} \geq \rho h_{\element}$ where $h_{e}=|{e}|$.
        \label{assump: mesh 1}
        \item \corrections{Each element $\element \in \mesh$} is star shaped with respect to a ball of radius $\rho h_K$. \label{assump: mesh 2}
        %
     \end{itemize}
\end{assumption} 

\begin{remark}\label{assump: star shaped wrt a ball}
    We note that from Assumption~\ref{assumption: mesh regularity} we can show that there exists an interior point $x_K$ to $\element$ such that the sub-triangle formed by connecting $x_K$  to the vertices of $\element$ is made of shape regular triangles. 
    This property, considered also in \cite{antonietti_fully_2018}, is necessary for the proof of Lemma \ref{lemma: boundary term bound}.
\end{remark}

For any $\element \in \mesh$ we denote the orthogonal $L^2(\element)$ projection onto the space $\prob_{\polOrder}(\element)$ by $\ltwo_{\polOrder} : L^2(\element) \rightarrow \prob_{\polOrder}(\element)$.
We also recall the following approximation results for the $L^2$ projection. A proof of the following can be obtained using for example the theory in \cite{brenner_mathematical_2008}.
\begin{theorem}\label{thm: interpolation estimates}
    Under Assumption \ref{assumption: mesh regularity}, for $\polOrder\geq 0$ and for any $w \in H^m (\element)$ with $1 \leq m \leq \polOrder +1$, it follows that
\begin{align*}
    | w - \ltwo_{\polOrder} w |_{s,\element} \lesssim h_K^{m-s} |w|_{m,\element}
\end{align*}
for $s = 0,1,2$ with $s\leq m$.
Further, for any edge $e$ shared by $\element^{+}$,$\element^{-} \in \cT_h$ 
and for any $w \in H^m(\element^{+} \cup \element^{-})$, with 
$1 \leq m \leq \polOrder+1$, it follows that
\begin{align*}
    |w-\cP_{\polOrder}^e w|_{s,e} 
    \lesssim
    h_e^{m-s-\frac{1}{2}} | w |_{m,\element^{+}\cup \element^{-}}
\end{align*}
for $s=0,1,2$ with $s\leq m$.
\end{theorem}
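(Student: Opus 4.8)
\noindent\textit{Proof strategy.} The plan is to reduce both estimates to classical polynomial approximation theory. The only role of Assumption~\ref{assumption: mesh regularity} is to guarantee that the standard tools --- the Bramble--Hilbert lemma in its scaled Dupont--Scott form, together with inverse and trace inequalities for polynomials --- are available on each polygonal element $\element$ and each of its edges $e$ with constants depending only on $\polOrder$ and $\rho$ (and not, e.g., on the number of edges of $\element$); under the star-shapedness hypothesis these follow from the usual scaling and covering arguments (see e.g.\ \cite{brenner_mathematical_2008}).

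\noindent\textit{Bulk estimate.} Since $1\le m\le\polOrder+1$, the averaged Taylor polynomial $Q^m w\in\prob_{m-1}(\element)\subseteq\prob_{\polOrder}(\element)$ is well defined, and because $\element$ is star shaped with respect to a ball of radius $\rho h_\element$ the Dupont--Scott estimate yields $|w-Q^m w|_{s,\element}\lesssim h_\element^{\,m-s}|w|_{m,\element}$ for $0\le s\le m$. For $s=0$ this is already the claim, because $\ltwo_{\polOrder}w$ is the $L^2(\element)$-orthogonal (hence best) approximation of $w$ in $\prob_{\polOrder}(\element)\supseteq\prob_{m-1}(\element)$. For $s=1,2$ I would split $w-\ltwo_{\polOrder}w=(w-Q^m w)-(\ltwo_{\polOrder}w-Q^m w)$, bound the first summand by Dupont--Scott, and observe that $\ltwo_{\polOrder}w-Q^m w\in\prob_{\polOrder}(\element)$; an inverse inequality $|p|_{s,\element}\lesssim h_\element^{-s}\|p\|_{0,\element}$ for $p\in\prob_{\polOrder}(\element)$, followed by the triangle inequality and the best-approximation property, gives $|\ltwo_{\polOrder}w-Q^m w|_{s,\element}\lesssim h_\element^{-s}\|\ltwo_{\polOrder}w-Q^m w\|_{0,\element}\lesssim h_\element^{-s}\|w-Q^m w\|_{0,\element}\lesssim h_\element^{\,m-s}|w|_{m,\element}$, completing this case.

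\noindent\textit{Edge estimate.} Fix an edge $e$ shared by $\element^{+},\element^{-}$; by Assumption~\ref{assumption: mesh regularity}, $h_e\sim h_{\element^{\pm}}$. Working on one of the two elements, say $\element=\element^{+}$, note that $Q^m w\in\prob_{m-1}(e)\subseteq\prob_{\polOrder}(e)$ on $e$ is reproduced by $\cP_{\polOrder}^e$, so $w-\cP_{\polOrder}^e w=(w-Q^m w)-\cP_{\polOrder}^e(w-Q^m w)$ on $e$. Using the one-dimensional inverse inequality on the segment $e$ of length $h_e$ for the projection term, I would bound $|w-\cP_{\polOrder}^e w|_{s,e}\lesssim|w-Q^m w|_{s,e}+h_e^{-s}\|w-Q^m w\|_{0,e}$. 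Both right-hand terms are then controlled by applying the multiplicative scaled trace inequality $\|v\|_{0,e}^2\lesssim h_\element^{-1}\|v\|_{0,\element}^2+h_\element|v|_{1,\element}^2$ to the relevant $j$-th derivatives $v=D^{j}(w-Q^m w)$ and invoking the Dupont--Scott bounds from the previous step, which gives $|w-Q^m w|_{j,e}\lesssim h_\element^{\,m-j-1/2}|w|_{m,\element}$; combining the two contributions and using $h_e\sim h_\element$ produces the claimed $h_e^{\,m-s-1/2}$ rate, and summing the two element contributions replaces $|w|_{m,\element}$ by $|w|_{m,\element^{+}\cup\element^{-}}$. (This is transparent whenever $s\le m-1$, so that $w-Q^m w\in H^{s+1}(\element)$; the borderline value $s=m$ should be read with the natural trace interpretation.)

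\noindent\textit{Main obstacle.} There is no serious analytic difficulty here; the argument is bookkeeping once the two polygon-level ingredients are in place. The point to be careful about is precisely those ingredients: the inverse inequality for polynomials on a general (non-triangular) polygon and the scaled trace inequality on its edges must be stated with constants independent of the element's shape beyond the parameter $\rho$, and the powers of $h_\element$ and $h_e$ must be tracked consistently throughout, using $h_e\sim h_\element$ from Assumption~\ref{assumption: mesh regularity}.
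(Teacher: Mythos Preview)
The paper does not actually prove this theorem; it simply states that ``A proof of the following can be obtained using for example the theory in \cite{brenner_mathematical_2008}.'' Your proposal is correct and is precisely the argument one extracts from that reference: Dupont--Scott/Bramble--Hilbert for the averaged Taylor polynomial, best-approximation plus an inverse inequality to transfer to the $L^2$ projection, and a scaled trace inequality for the edge bound. There is nothing to compare against, and your sketch is the standard route.
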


Finally, we use the following notation for the local bilinear form $a^\element(\cdot,\cdot)$, that is, for $\element \in \mesh$, define $a^\element$ as 
$$a^\element(v,w) = \int_K D^2 v : D^2w \, \dx \quad \forall \, v,w \in V.$$
\section{Virtual element discretization}\label{sec: discrete problem}
In this section we construct the VEM discretization. This involves building the discrete virtual element space, $\vemSpace$, the projection operators, and the discrete forms. 
The first step in constructing the VEM space $\vemSpace$ is to introduce an enlarged space  $\localenlargedVemSpace$ and an extended set of dofs for this space. 
We then introduce a suitable set of dofs for $\localVEMSpace$ and use these dofs to construct the \emph{dof compatible} projection operators $\valueProj, \gradProj, \hessProj$ which will be used to define the local VEM space as well as the local discrete forms. 
The projections have the advantage of being fully computable using the dofs. 
We aim to keep this section brief as the construction can be found in \cite{10.1093/imanum/drab003}. 
Throughout this section and the remainder of the paper we assume that $\polOrder \geq 2$.

\subsection{Local VEM space}
Following the standard VEM enhancement technique \cite{ahmad_equivalent_2013,cangiani_conforming_2015} we first give the definition of the local enlarged VEM space $\localenlargedVemSpace$ for $K \in \mesh$.

\begin{definition}[Local enlarged space]\label{defn: local enlarged space}
    Given an element $K \in \mesh$ the \emph{local enlarged space} $\localenlargedVemSpace$ is defined as 
    \begin{align*}
        \localenlargedVemSpace = \left\{ v_h \in H^2(K) : \Delta^2 v_h \in \prob_{\polOrder}(K), \, 
        v_h|_{e} \in \prob_{\polOrder}(e),
        \,
        \Delta v_h |_{e} \in \prob_{\polOrder-2}(e), \, 
        \forall e \subset \partial K \right\}.
    \end{align*}   
\end{definition}
    We adopt the \emph{dof tuple} notation from \cite{10.1093/imanum/drab003}, and note that $\localenlargedVemSpace$ is characterized by the extended set of dofs $\widetilde{\Lambda}^{\element}$ described by the dof tuple $(0,-1,\polOrder-2,\polOrder-2,\polOrder)$, i.e.,
    we fix function values of $v_h$ at the vertices of $K$ and we fix edge moments up to $\polOrder-2$. 
    We then define the edge normal moments up to order $\polOrder-2$ and define inner moments up to order $\polOrder$. 
    This set of dofs is unisolvent over the space $\localenlargedVemSpace$ \cite{10.1093/imanum/drab003}.
The local VEM space $\localVEMSpace \subset \localenlargedVemSpace$ is characterized by the set of dofs $\Lambda^{\element}$ described by the dof tuple $(0,-1,\polOrder-3,\polOrder-2,\polOrder-4)$.
Note that this set of dofs is identical to the set used in \cite{antonietti_fully_2018} for the biharmonic problem and they are visualized on triangles in Figure \ref{figure: C^1 non-conforming dofs for l=2,3,4,5 on triangles}.

\ifthenelse{\boolean{arxiv}}
{
    \begin{figure}[!ht]
        \centering
        \begin{tikzpicture}
            \draw (0,0) -- (2,0);
            \draw (0,0) -- (1,1.73);
            \draw (1,1.73) -- (2,0);
            \fill[blue!40!white] (0,0) circle (0.075cm);
            \fill[blue!40!white] (1,1.73) circle (0.075cm);
            \fill[blue!40!white] (2,0) circle (0.075cm);
            \draw[arrows=->,line width=.4pt] (1.5,0.87) -- (1.93,1.12);
            \draw[arrows=->,line width=.4pt] (1,0) -- (1,-0.5);
            \draw[arrows=->,line width=.4pt] (0.5,0.87) -- (0.067,1.12);
            \node at (1, -1) {$\polOrder = 2$};
        
            \draw (4,0) -- (6,0);
            \draw (4,0) -- (5,1.73);
            \draw (5,1.73) -- (6,0);
            \fill[blue!40!white] (6,0) circle (0.075cm);
            \fill[blue!40!white] (5,1.73) circle (0.075cm);
            \fill[blue!40!white] (4,0) circle (0.075cm);
            \draw[arrows=->,line width=.4pt] (4.8,0) -- (4.8,-0.5);
            \draw[arrows=->,line width=.4pt] (5.2,0) -- (5.2,-0.5);
            \draw[arrows=->,line width=.4pt] (5.367,1.046) -- (5.81,1.302);
            \draw[arrows=->,line width=.4pt] (5.61,0.674) -- (6.06,0.93);
            \draw[arrows=->,line width=.4pt] (4.595,1.03) -- (4.152,1.28);
            \draw[arrows=->,line width=.4pt] (4.39,0.68) -- (3.941,0.94);
            \fill[red!40!white] (4.5,0.87) circle (0.075cm);
            \fill[red!40!white] (5.5,0.87) circle (0.075cm);
            \fill[red!40!white] (5,0) circle (0.075cm);
            \node at (5, -1) {$\polOrder = 3$};
    
            \draw (8,0) -- (10,0);
            \draw (8,0) -- (9,1.73);
            \draw (9,1.73) -- (10,0);
            \fill[blue!40!white] (10,0) circle (0.075cm);
            \fill[blue!40!white] (9,1.73) circle (0.075cm);
            \fill[blue!40!white] (8,0) circle (0.075cm);
            \fill[green] (8.925,0.505) rectangle (9.075,0.655);
    
            \draw [arrows=->,line width=.4pt] (8.5,0) -- (8.5,-0.5);
            \draw [arrows=->,line width=.4pt] (9,0) -- (9,-0.5);
            \draw [arrows=->,line width=.4pt] (9.5,0) -- (9.5,-0.5);
    
            \fill[red!40!white]  (8.75,0) circle (0.075cm);
            \fill[red!40!white] (9.25,0) circle (0.075cm);
            \fill[red!40!white] (8.625,1.082531754730546) circle (0.075cm);
            \fill[red!40!white] (8.375,0.6495190528383272) circle (0.075cm);
            \fill[red!40!white] (9.375,1.0825317547305477) circle (0.075cm);
            \fill[red!40!white] (9.625,0.649519052838329) circle (0.075cm);
    
            \draw [arrows=->,line width=.4pt] (8.25,0.43301270189221786) -- (7.81585938579443,0.683663902369954);
            \draw [arrows=->,line width=.4pt] (8.5,0.8660254037844366) -- (8.065859385794429,1.1166766042621727);
            \draw [arrows=->,line width=.4pt] (8.75,1.2990381056766562) -- (8.315859385794429,1.5496893061543924);
    
            \draw [arrows=->,line width=.4pt] (9.75,0.43301270189221874) -- (10.184140614205571,0.683663902369954);
            \draw [arrows=->,line width=.4pt] (9.5,0.8660254037844393) -- (9.934140614205571,1.1166766042621745);
            \draw [arrows=->,line width=.4pt] (9.25,1.299038105676658) -- (9.684140614205571,1.5496893061543933);
            \node at (9, -1) {$\polOrder = 4$};
    
            \draw (12,0) -- (14,0);
            \draw (12,0) -- (13,1.73);
            \draw (13,1.73) -- (14,0);
            \fill[blue!40!white] (14,0) circle (0.075cm);
            \fill[blue!40!white] (13,1.73) circle (0.075cm);
            \fill[blue!40!white] (12,0) circle (0.075cm);
    
            \fill[green] (12.925,0.605) rectangle (13.075,0.755);
            \fill[green] (12.825,0.405) rectangle (12.975,0.555);
            \fill[green] (13.025,0.405) rectangle (13.175,0.555);
    
            \fill[red!40!white] (13.497533850441435,0.8702969001189368) circle (0.075cm);
            \fill[red!40!white] (12.295067700882871,0.5281582349396556) circle (0.075cm);
            \fill[red!40!white] (12.495067700882872,0.874568396453431) circle (0.075cm);
            \fill[red!40!white] (12.695067700882873,1.2209785579672063) circle (0.075cm);
    
            \fill[red!40!white] (13.3,1.2124355652982128) circle (0.075cm);
            \fill[red!40!white] (13.5,0.8660254037844375) circle (0.075cm);
            \fill[red!40!white] (13.7,0.5196152422706621) circle (0.075cm);
    
            \fill[red!40!white] (12.6,0) circle (0.075cm);
            \fill[red!40!white] (13,0) circle (0.075cm);
            \fill[red!40!white] (13.4,0) circle (0.075cm);
    
            \draw [arrows=->,line width=.4pt] (13.2,0) -- (13.2,-0.5);
            \draw [arrows=->,line width=.4pt] (13.6,0) -- (13.6,-0.5);
            \draw [arrows=->,line width=.4pt] (12.4,0) -- (12.4,-0.5);
            \draw [arrows=->,line width=.4pt] (12.8,0) -- (12.8,-0.5);
    
            \draw [arrows=->,line width=.4pt] (13.2,1.3856406460551014) -- (13.579838221736962,1.5988388306005594);
            \draw [arrows=->,line width=.4pt] (13.4,1.0392304845413243) -- (13.778876472189724,1.2540944681667543);
            \draw [arrows=->,line width=.4pt] (13.6,0.6928203230275507) -- (13.982025138915313,0.9022306559081561);
            \draw [arrows=->,line width=.4pt] (13.8,0.34641016151377535) -- (14.186094576856972,0.5487720211211804);
    
            \draw [arrows=->,line width=.4pt] (12.795067700882871,1.394183638724095) -- (12.429550603832414,1.6054755208373084);
            \draw [arrows=->,line width=.4pt] (12.595067700882872,1.0477734772103178) -- (12.243583761909406,1.2833715021035295);
            \draw [arrows=->,line width=.4pt] (12.395067700882873,0.7013633156965442) -- (12.03859355275168,0.9283180447881755);
            \draw [arrows=->,line width=.4pt] (12.195067700882873,0.35495315418276885) -- (11.842317615947124,0.5883581499395047);
            \node at (13, -1) {$\polOrder = 5$};
        \end{tikzpicture}
        \caption{Degrees of freedom for polynomial orders $\polOrder=2,3,4,5$ on triangles for the local VEM space. Circles at vertices represent vertex dofs, arrows represent edge normal dofs, circles on edges represent edge value moments and interior squares represent inner dofs.}
        \label{figure: C^1 non-conforming dofs for l=2,3,4,5 on triangles}
    \end{figure}
}
{
    \begin{figure}[!ht]
        \centering
        \resizebox{\textwidth}{!}{%
        \begin{tikzpicture}
            \draw (0,0) -- (2,0);
            \draw (0,0) -- (1,1.73);
            \draw (1,1.73) -- (2,0);
            \fill[blue!40!white] (0,0) circle (0.075cm);
            \fill[blue!40!white] (1,1.73) circle (0.075cm);
            \fill[blue!40!white] (2,0) circle (0.075cm);
            \draw[arrows=->,line width=.4pt] (1.5,0.87) -- (1.93,1.12);
            \draw[arrows=->,line width=.4pt] (1,0) -- (1,-0.5);
            \draw[arrows=->,line width=.4pt] (0.5,0.87) -- (0.067,1.12);
            \node at (1, -1) {$\polOrder = 2$};
        
            \draw (4,0) -- (6,0);
            \draw (4,0) -- (5,1.73);
            \draw (5,1.73) -- (6,0);
            \fill[blue!40!white] (6,0) circle (0.075cm);
            \fill[blue!40!white] (5,1.73) circle (0.075cm);
            \fill[blue!40!white] (4,0) circle (0.075cm);
            \draw[arrows=->,line width=.4pt] (4.8,0) -- (4.8,-0.5);
            \draw[arrows=->,line width=.4pt] (5.2,0) -- (5.2,-0.5);
            \draw[arrows=->,line width=.4pt] (5.367,1.046) -- (5.81,1.302);
            \draw[arrows=->,line width=.4pt] (5.61,0.674) -- (6.06,0.93);
            \draw[arrows=->,line width=.4pt] (4.595,1.03) -- (4.152,1.28);
            \draw[arrows=->,line width=.4pt] (4.39,0.68) -- (3.941,0.94);
            \fill[red!40!white] (4.5,0.87) circle (0.075cm);
            \fill[red!40!white] (5.5,0.87) circle (0.075cm);
            \fill[red!40!white] (5,0) circle (0.075cm);
            \node at (5, -1) {$\polOrder = 3$};
    
            \draw (8,0) -- (10,0);
            \draw (8,0) -- (9,1.73);
            \draw (9,1.73) -- (10,0);
            \fill[blue!40!white] (10,0) circle (0.075cm);
            \fill[blue!40!white] (9,1.73) circle (0.075cm);
            \fill[blue!40!white] (8,0) circle (0.075cm);
            \fill[green] (8.925,0.505) rectangle (9.075,0.655);
    
            \draw [arrows=->,line width=.4pt] (8.5,0) -- (8.5,-0.5);
            \draw [arrows=->,line width=.4pt] (9,0) -- (9,-0.5);
            \draw [arrows=->,line width=.4pt] (9.5,0) -- (9.5,-0.5);
    
            \fill[red!40!white]  (8.75,0) circle (0.075cm);
            \fill[red!40!white] (9.25,0) circle (0.075cm);
            \fill[red!40!white] (8.625,1.082531754730546) circle (0.075cm);
            \fill[red!40!white] (8.375,0.6495190528383272) circle (0.075cm);
            \fill[red!40!white] (9.375,1.0825317547305477) circle (0.075cm);
            \fill[red!40!white] (9.625,0.649519052838329) circle (0.075cm);
    
            \draw [arrows=->,line width=.4pt] (8.25,0.43301270189221786) -- (7.81585938579443,0.683663902369954);
            \draw [arrows=->,line width=.4pt] (8.5,0.8660254037844366) -- (8.065859385794429,1.1166766042621727);
            \draw [arrows=->,line width=.4pt] (8.75,1.2990381056766562) -- (8.315859385794429,1.5496893061543924);
    
            \draw [arrows=->,line width=.4pt] (9.75,0.43301270189221874) -- (10.184140614205571,0.683663902369954);
            \draw [arrows=->,line width=.4pt] (9.5,0.8660254037844393) -- (9.934140614205571,1.1166766042621745);
            \draw [arrows=->,line width=.4pt] (9.25,1.299038105676658) -- (9.684140614205571,1.5496893061543933);
            \node at (9, -1) {$\polOrder = 4$};
    
            \draw (12,0) -- (14,0);
            \draw (12,0) -- (13,1.73);
            \draw (13,1.73) -- (14,0);
            \fill[blue!40!white] (14,0) circle (0.075cm);
            \fill[blue!40!white] (13,1.73) circle (0.075cm);
            \fill[blue!40!white] (12,0) circle (0.075cm);
    
            \fill[green] (12.925,0.605) rectangle (13.075,0.755);
            \fill[green] (12.825,0.405) rectangle (12.975,0.555);
            \fill[green] (13.025,0.405) rectangle (13.175,0.555);
    
            \fill[red!40!white] (13.497533850441435,0.8702969001189368) circle (0.075cm);
            \fill[red!40!white] (12.295067700882871,0.5281582349396556) circle (0.075cm);
            \fill[red!40!white] (12.495067700882872,0.874568396453431) circle (0.075cm);
            \fill[red!40!white] (12.695067700882873,1.2209785579672063) circle (0.075cm);
    
            \fill[red!40!white] (13.3,1.2124355652982128) circle (0.075cm);
            \fill[red!40!white] (13.5,0.8660254037844375) circle (0.075cm);
            \fill[red!40!white] (13.7,0.5196152422706621) circle (0.075cm);
    
            \fill[red!40!white] (12.6,0) circle (0.075cm);
            \fill[red!40!white] (13,0) circle (0.075cm);
            \fill[red!40!white] (13.4,0) circle (0.075cm);
    
            \draw [arrows=->,line width=.4pt] (13.2,0) -- (13.2,-0.5);
            \draw [arrows=->,line width=.4pt] (13.6,0) -- (13.6,-0.5);
            \draw [arrows=->,line width=.4pt] (12.4,0) -- (12.4,-0.5);
            \draw [arrows=->,line width=.4pt] (12.8,0) -- (12.8,-0.5);
    
            \draw [arrows=->,line width=.4pt] (13.2,1.3856406460551014) -- (13.579838221736962,1.5988388306005594);
            \draw [arrows=->,line width=.4pt] (13.4,1.0392304845413243) -- (13.778876472189724,1.2540944681667543);
            \draw [arrows=->,line width=.4pt] (13.6,0.6928203230275507) -- (13.982025138915313,0.9022306559081561);
            \draw [arrows=->,line width=.4pt] (13.8,0.34641016151377535) -- (14.186094576856972,0.5487720211211804);
    
            \draw [arrows=->,line width=.4pt] (12.795067700882871,1.394183638724095) -- (12.429550603832414,1.6054755208373084);
            \draw [arrows=->,line width=.4pt] (12.595067700882872,1.0477734772103178) -- (12.243583761909406,1.2833715021035295);
            \draw [arrows=->,line width=.4pt] (12.395067700882873,0.7013633156965442) -- (12.03859355275168,0.9283180447881755);
            \draw [arrows=->,line width=.4pt] (12.195067700882873,0.35495315418276885) -- (11.842317615947124,0.5883581499395047);
            \node at (13, -1) {$\polOrder = 5$};
        \end{tikzpicture}
        }
        \caption{Degrees of freedom for polynomial orders $\polOrder=2,3,4,5$ on triangles for the local VEM space. Circles at vertices represent vertex dofs, arrows represent edge normal dofs, circles on edges represent edge value moments and interior squares represent inner dofs.}
        \label{figure: C^1 non-conforming dofs for l=2,3,4,5 on triangles}
    \end{figure}
}

In order to define the local VEM space we now introduce the following projection operators: an element value projection $\valueProj : \localenlargedVemSpace \rightarrow \prob_{\polOrder}(K)$, an edge value projection $\edgeProj : \localenlargedVemSpace \rightarrow \prob_{\polOrder}(e)$, and an edge normal projection $\edgeNormalProj : \localenlargedVemSpace \rightarrow \prob_{\polOrder-1}(e)$.
These projections must be computable from the reduced set of dofs $\Lambda^{\element}$ and must also satisfy the following.

\begin{assumption}\label{ass: dof compatible projs}
    For $v_h \in \enlargedVemSpace$, assume the value projection $\valueProj$, edge projection $\edgeProj$, and edge normal projection $\edgeNormalProj$ are a linear combination of the dofs $\Lambda^{\element}(v_h)$ and satisfy the additional properties.  
    \begin{itemize}
        \item The value projection $\valueProj v_h \in \prob_{\polOrder}(\element)$ satisfies
        \begin{align*}
            \int_K \valueProj v_h p \, \dx = \int_K v_h p \, \dx \quad \forall p \in \prob_{\polOrder -4}(K)
        \end{align*}
        and $\valueProj q = q$ for all $q \in \prob_{\polOrder}(\element)$.
        \item For each edge, the edge projection $\edgeProj v_h \in \prob_{\polOrder}(e)$ satisfies $ \edgeProj v_h(e^{\pm}) = v_h(e^{\pm})$,
        \begin{align*}
           \int_e \edgeProj v_h p \, \ds = \int_e v_h p \, \ds \quad \forall p \in \prob_{\polOrder-3}(e), 
        \end{align*}
        and $\edgeProj q = q|_e$ for all $q \in \prob_{\polOrder}(\element)$.
        \item For each edge, the edge normal projection $\edgeNormalProj v_h \in \prob_{\polOrder-1}(e)$ satisfies
        \begin{align*}
            \int_e \edgeNormalProj v_h p \, \ds = \int_e \partial_n v_h p \, \ds \quad \forall p \in \prob_{\polOrder-2}(e),
        \end{align*}
        and $\edgeNormalProj q = \partial_n q|_e$ for all $q \in \prob_{\polOrder}(\element)$.
    \end{itemize}
\end{assumption}
Note that there are multiple ways of defining the value, edge, and edge normal projections so that they satisfy Assumption~\ref{ass: dof compatible projs}.
An example choice based on constrained least squares problems can be found in \cite{10.1093/imanum/drab003,dedner2022framework}, where we refer the reader for more details.
Finally, we are now able to define both the gradient $\gradProj$ and hessian $\hessProj$ projections.
\begin{definition}
    The \emph{gradient projection}, $\gradProj : \enlargedVemSpace^K \rightarrow [\prob_{\polOrder-1}(K)]^2$, is defined as
        \begin{align*}\label{eqn: grad proj}
            \int_K \gradProj v_h p \, \dx = - \int_K \valueProj v_h \nabla p \, dx + \sum_{e \subset \partial K } \int_e \edgeProj v_h p n \, \ds, 
            \quad \forall p \in [\prob_{\polOrder-1 }(K)]^2.
        \end{align*}
\end{definition}
        
\begin{definition}
    The \emph{hessian projection}, $\hessProj : \enlargedVemSpace^K \rightarrow [\prob_{\polOrder-2}(K)]^{2 \times 2}$, is defined as
    \begin{align*}
        \int_K \hessProj v_h p \, \dx = - \int_K \gradProj v_h \otimes \nabla p \, \dx + \sum_{e \subset \partial K } \int_{e} \big(  \edgeNormalProj v_h n \otimes n p + \partial_s ( \edgeProj v_h ) \tau \otimes n p \big) \, \ds,
    \end{align*}
    for all $ p \in [\prob_{\polOrder-2}(K)]^{2\times 2}$.
    Here $n,\tau$ denote the unit normal and tangent vectors of $e$, respectively.
\end{definition}


\begin{definition}
    We use the notation $\Pi^h_0, \Pi^h_1$ and $\Pi^h_2$ to denote the global value, gradient, and hessian projections, respectively. 
    Therefore for $s=0,1,2$,
    \begin{align*}
        (\Pi^h_s v_h) |_{\element} := \Pi^K_s v_h \quad \forall v_h \in \vemSpace.
    \end{align*}      
\end{definition}

\begin{definition}[Local virtual space]
    The \emph{local virtual element space} $\localVEMSpace$ is defined as 
    \begin{equation}\label{eqn: local space}
        \begin{split}
            \localVEMSpace := \Big\{ v_h \in \enlargedVemSpace^K : \ (v_h - \valueProj v_h, p)_K &= 0 \quad \forall p \in \prob_{\polOrder}(K) \backslash \prob_{\polOrder -4}(K), 
            \\
            (v_h - \edgeProj v_h , p)_e &= 0 \quad \forall p \in \prob_{\polOrder-2}(e) \backslash \prob_{\polOrder-3}(e) \Big\}.
        \end{split}
    \end{equation}
\end{definition}
The set of local degrees of freedom $\Lambda^K$ is unisolvent over $\localVEMSpace$, a proof of which can be found in \cite{10.1093/imanum/drab003}. 
Also shown in \cite{10.1093/imanum/drab003} is the proof of the subsequent lemma, detailing that all the projections satisfy a crucial $L^2$ projection property.
This property follows as a consequence of the construction of the value, gradient, and hessian projections.  
\begin{lemma}\label{eqn: projections and l2 projection result}
    Assume that the value, edge, and edge normal projections satisfy Assumption~\ref{ass: dof compatible projs}.
    Then, the value, gradient, and hessian projections satisfy
    \begin{alignat}{3}
        \Pi_s^{K} v_h &= \cP_{\polOrder-s}^K(D^s v_h) &&\quad \forall v_h \in \localVEMSpace,&& \quad s=0,1,2.
    \intertext{It also holds that $\prob_{\polOrder}(K) \subset \localVEMSpace$ and therefore}
        \Pi_s^K p &= D^s p &&\quad \forall p \in \prob_{\polOrder}(K),&& \quad s=0,1,2.
    \end{alignat}
\end{lemma}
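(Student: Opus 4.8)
The plan is to prove the three projection identities one polynomial degree at a time, the guiding idea being that in each case the computable projection $\Pi^K_s v_h$ already satisfies the defining relation of the genuine $L^2$ projection $\cP_{\polOrder-s}^K(D^s v_h)$, once Green's formula is applied to the polynomial test functions and one uses that the moments of $v_h$ missing from the small dof set are exactly those that the defining relations of $\localVEMSpace$ in \eqref{eqn: local space} restore. For $s=0$: given $v_h \in \localVEMSpace$, the constraint in the definition of $\valueProj$ (Definition~\ref{defn: projections}) yields $(v_h - \valueProj v_h, p)_K = 0$ for all $p \in \prob_{\polOrder-4}(K)$, while the first condition in \eqref{eqn: local space} yields it for $p \in \prob_{\polOrder}(K)\backslash\prob_{\polOrder-4}(K)$; together this is orthogonality to all of $\prob_{\polOrder}(K)$, and since $\valueProj v_h \in \prob_{\polOrder}(K)$ we get $\valueProj v_h = \cP_{\polOrder}^K v_h$. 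I would then record the \emph{edge exactness} $\edgeProj v_h = v_h|_e$ for each $e \subset \partial K$: the first defining relation of $\edgeProj$ matches the $\prob_{\polOrder-3}(e)$-moments of $v_h$, the second condition in \eqref{eqn: local space} matches the one remaining moment in $\prob_{\polOrder-2}(e)\backslash\prob_{\polOrder-3}(e)$, and the endpoint conditions match the two endpoint values; as $v_h|_e, \edgeProj v_h \in \prob_{\polOrder}(e)$, writing the difference as $b_e\,q$ with $b_e$ the quadratic edge bubble on $e$ (the product of the two affine functions vanishing at its endpoints) and $q \in \prob_{\polOrder-2}(e)$, testing against $q$, and using that $b_e$ has a fixed sign on the interior of $e$ forces the difference to vanish.

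For $s=1$ I would test the characterisation of $\cP_{\polOrder-1}^K(\nabla v_h)$ against $p \in [\prob_{\polOrder-1}(K)]^2$ and integrate by parts, $\int_K \nabla v_h \cdot p\,\dx = -\int_K v_h\,(\nabla\cdot p)\,\dx + \sum_{e \subset \partial K}\int_e v_h\,(p\cdot n)\,\ds$. Since $\nabla\cdot p \in \prob_{\polOrder-2}(K) \subset \prob_{\polOrder}(K)$, the volume term equals $-\int_K \valueProj v_h\,(\nabla\cdot p)\,\dx$ by the $s=0$ identity; since $p\cdot n|_e \in \prob_{\polOrder-1}(e)$, the boundary term equals $\sum_e \int_e \edgeProj v_h\,(p\cdot n)\,\ds$ by edge exactness; the right-hand side is now exactly the defining relation of $\gradProj v_h$, so $\gradProj v_h = \cP_{\polOrder-1}^K(\nabla v_h)$ because $\gradProj v_h \in [\prob_{\polOrder-1}(K)]^2$. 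For $s=2$ I would similarly test the characterisation of $\cP_{\polOrder-2}^K(D^2 v_h)$ against $p \in [\prob_{\polOrder-2}(K)]^{2\times2}$ and integrate by parts once, producing a volume term in $\nabla v_h$ paired with $\nabla\cdot p \in [\prob_{\polOrder-3}(K)]^2 \subset [\prob_{\polOrder-1}(K)]^2$, which by the $s=1$ identity may be rewritten with $\gradProj v_h$, and a boundary term $\sum_e \int_e \nabla v_h\cdot(pn)\,\ds$. Splitting $\nabla v_h = (\partial_n v_h)n + (\partial_s v_h)\tau$ on each edge, the scalar factors $n\cdot pn$ and $\tau\cdot pn$ lie in $\prob_{\polOrder-2}(e)$, so the first defining relation of $\edgeNormalProj$ lets me replace $\partial_n v_h$ by $\edgeNormalProj v_h$, while $\partial_s v_h = \partial_s(\edgeProj v_h)$ by edge exactness; the resulting expression is exactly the defining relation of $\hessProj v_h$, hence $\hessProj v_h = \cP_{\polOrder-2}^K(D^2 v_h)$.

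For the inclusion $\prob_{\polOrder}(K) \subset \localVEMSpace$: a polynomial $q \in \prob_{\polOrder}(K)$ lies in $\localenlargedVemSpace$ by degree counting ($\Delta^2 q \in \prob_{\polOrder-4}(K)$, $q|_e \in \prob_{\polOrder}(e)$, $\Delta q|_e \in \prob_{\polOrder-2}(e)$); since $q$ is feasible for the minimisation defining $\valueProj q$ and achieves objective value $0$, which is the minimum of the positive semidefinite $\stabilisation$, uniqueness of the minimiser --- equivalently, unisolvence of $\Lambda^K(\polOrder-3,\polOrder-4)$ over $\prob_{\polOrder}(K)$, established in \cite{10.1093/imanum/drab003} --- forces $\valueProj q = q$, and then $\edgeProj q = q|_e$ exactly as in the edge-exactness step; both conditions in \eqref{eqn: local space} then hold trivially, so $q \in \localVEMSpace$. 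Combining with the first part, $\Pi^K_s q = \cP_{\polOrder-s}^K(D^s q) = D^s q$, since $D^s q$ already belongs to the polynomial space onto which $\cP_{\polOrder-s}^K$ projects.

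The step I expect to be the main obstacle is the edge-exactness claim --- that the moments up to degree $\polOrder-2$ together with the two endpoint values determine a degree-$\polOrder$ polynomial on an edge --- together with the accompanying degree bookkeeping: the identities survive replacing $v_h$ by its projections only because each test quantity ($\nabla\cdot p$, $p\cdot n$, $n\cdot pn$, $\tau\cdot pn$, and $\nabla\cdot p$ again at the Hessian level) lands in precisely the polynomial space against which the corresponding projection reproduces $v_h$. Everything else reduces to Green's formula and unwinding Definition~\ref{defn: projections}.
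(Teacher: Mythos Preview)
Your proof is correct. The paper itself does not give an in-text proof of this lemma; immediately before the statement it says the proof ``can be found in \cite{10.1093/imanum/drab003}'' and that the property ``follows as a consequence of the construction of the value, gradient, and hessian projections.'' Your argument is precisely the natural construction-based proof that sentence alludes to: use the constraint of $\valueProj$ together with the first relation in \eqref{eqn: local space} to get $\valueProj v_h=\ltwo_\polOrder v_h$; use the defining relations of $\edgeProj$ together with the second relation in \eqref{eqn: local space} and the endpoint conditions to obtain the edge-exactness $\edgeProj v_h=v_h|_e$; then integrate by parts once (resp.\ twice) and observe that every resulting test quantity has the right polynomial degree to invoke the previously established identities, recovering the defining relations of $\gradProj$ and $\hessProj$. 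One small remark: in the $s=1$ step the degree of $p\cdot n$ is actually irrelevant, since edge exactness is a pointwise identity; the degree bookkeeping only matters in the $s=2$ step for the normal component, where you use the first defining relation of $\edgeNormalProj$ against $\prob_{\polOrder-2}(e)$. Your treatment of the inclusion $\prob_\polOrder(K)\subset\localVEMSpace$ via feasibility of $q$ in the constrained minimisation, with uniqueness deferred to \cite{10.1093/imanum/drab003}, mirrors the paper's own reliance on that reference.
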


\subsection{Global spaces and the discrete forms}
The global VEM space can now be defined in the standard way as follows. 
\begin{definition}[Global virtual space]\label{defn: global VEM space}
    The \emph{global VEM space} is defined as
    \begin{align}\label{eqn: global VEM space}
        \vemSpace := \left\{ v_h \in \HTwoNCSpace : v_h|_K \in \localVEMSpace \quad \forall K \in \cT_h \right\}
    \end{align}
    where $\HTwoNCSpace$ is the nonconforming space given in Definition \ref{defn: H2 nonconforming space}. 
\end{definition}

We can define the corresponding global dofs in the usual way \cite{beirao_da_veiga_basic_2013}.
We set the local normal dofs which correspond to boundary edges to zero.
Note that the global degrees of freedom are unisolvent - this follows from the unisolvency of the local
degrees of freedom and the definition of the local spaces.

Now that the virtual spaces \corrections{and} projection operators \corrections{are in place,} we are able to define the discrete bilinear forms\corrections{.}

\begin{definition}[Discrete forms]\label{defn: discrete forms}
    For $v_h,w_h,z_h \in \vemSpace$ define the discrete forms as
    \begin{align*}
        \ahhessianLocal (v_h,w_h) &= \int_K \hessProj v_h : \hessProj w_h \, \dx + h_K^{-2} \stabilisation(v_h-\valueProj v_h,w_h-\valueProj w_h),
        \\
        \ahlowerLocal (v_h,w_h) &= \int_K \valueProj v_h \valueProj w_h \, \dx + h_K^2 \stabilisation(v_h-\valueProj v_h,w_h-\valueProj w_h),
        \\
        \rh^K ( z_h;  v_h, w_h) &= \int_K  \phi^{\prime} ( z_h ) \, \gradProj v_h \cdot \gradProj w_h \, \dx
        + \beta_K \stabilisation (v_h - \valueProj v_h,w_h-\valueProj w_h ),
    \end{align*}
    where $\stabilisation(\cdot,\cdot)$ is, for example, the standard ``dofi-dofi'' stabilization, see e.g. \cite{beirao_da_veiga_basic_2013}: $\stabilisation(v_h,w_h) := \sum_{\lambda_i \in \Lambda^{\element}} \lambda_i(v_h)\lambda_i(w_h)$ and $\beta_K$ is constant. 
    We note that an exploration of the stabilization and its role within the virtual element method can be found in \cite{mascotto2023role}.

    \jsccorrections{
    The global forms can be defined in the usual way, 
    \begin{align*}
        \ahhessian (v_h,w_h) &:= \sum_{K \in \mesh} \ahhessianLocal (v_h,w_h),
        \\
        \ahlower (v_h,w_h) &:= \sum_{K \in \mesh} \ahlowerLocal (v_h,w_h),
        \\
        \rh (z_h;v_h,w_h) &:= \sum_{K \in \mesh} \rhLocal (z_h;v_h,w_h),
    \end{align*}
    for $z_h,v_h,w_h \in \vemSpace$.}
\end{definition}

\subsection{The semidiscrete problem}\label{sec: semidiscrete scheme}
The semidiscrete problem is defined as follows: find $u_h(\cdot,t) \in \vemSpace$ such that 
\begin{equation}\label{eqn: semidiscrete scheme}
    \begin{split}
        &m_h(\partial_t u_h, v_h) + \eps^2 a_h ( u_h, v_h ) + \rh(\Pi^h_0 u_h;u_h,v_h) = 0 \quad \forall \, v_h \in \vemSpace, \text{ a.e. $t$ in } (0,T),
        \\
        &u_h(\cdot,0) = u_{h,0}(\cdot) \in \vemSpace,
    \end{split}
\end{equation}
where $u_{h,0}$ is some approximation of $u_0$ and the discrete forms are given in Definition \ref{defn: discrete forms}.

\begin{remark}
        Note that in order to define the semidiscrete scheme \jsccorrections{in \eqref{eqn: semidiscrete scheme}}, the value projection \jsccorrections{onto higher order polynomials} is used in the first argument of $r_h^{\element}(\cdot;\cdot,\cdot)$.
        However, we need the form as it is \jsccorrections{introduced in Definition~\ref{defn: discrete forms}} to be able to apply it to $u$ which is necessary for the analysis.
\end{remark}

\jsccorrections{Lastly, we detail some important properties of the discrete forms, necessary for the error analysis of problem \eqref{eqn: semidiscrete scheme}.}
Due to Lemma \ref{eqn: projections and l2 projection result} it is immediate that the discrete forms $\ahhessianLocal(\cdot,\cdot)$ and $\ahlowerLocal(\cdot,\cdot)$ possess the standard consistency property, \corrections{implying} that whenever one of the entries in the bilinear form is a polynomial of degree $\polOrder$, the form is exact.
\begin{lemma}[Polynomial consistency]\label{lemma: consistency}
    For any $w_h \in \vemSpace$, it holds that 
    \begin{align*}
        \ahhessianLocal ( p , w_h ) = a^K ( p , w_h), \quad 
        \quad
        \ahlowerLocal ( p , w_h ) = (p,w_h)_K
    \end{align*}
    for all $p \in \prob_{\polOrder}(K)$.
\end{lemma}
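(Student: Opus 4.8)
The plan is to reduce both identities to the $L^2$-projection property recorded in Lemma~\ref{eqn: projections and l2 projection result}, after first removing the stabilization terms. The key initial observation is that $\prob_{\polOrder}(K) \subset \localVEMSpace$, so the second part of Lemma~\ref{eqn: projections and l2 projection result} applies to $p$, giving $\valueProj p = p$ and $\hessProj p = D^2 p$. In particular $p - \valueProj p = 0$, and since $\stabilisation(\cdot,\cdot)$ is bilinear (it is a finite sum of products of dof functionals, each of which is linear), every stabilization contribution appearing in $\ahhessianLocal(p,w_h)$ and in $\ahlowerLocal(p,w_h)$ vanishes. Hence
\begin{align*}
    \ahhessianLocal(p,w_h) = \int_K D^2 p : \hessProj w_h \, \dx, \qquad
    \ahlowerLocal(p,w_h) = \int_K p \, \valueProj w_h \, \dx.
\end{align*}

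Next I would invoke the first part of Lemma~\ref{eqn: projections and l2 projection result}. Since $w_h \in \vemSpace$ means $w_h|_K \in \localVEMSpace$, we have $\hessProj w_h = \cP_{\polOrder-2}^K(D^2 w_h)$ and $\valueProj w_h = \cP_{\polOrder}^K w_h$, where the projection of a matrix field is taken componentwise. Because $p$ has degree at most $\polOrder$, its Hessian $D^2 p$ has entries in $\prob_{\polOrder-2}(K)$ and $p$ itself lies in $\prob_{\polOrder}(K)$; in both cases the fixed factor is a polynomial of exactly the degree against which the relevant $L^2$ projection is orthogonal. Applying the defining orthogonality of $\cP_{\polOrder-2}^K$ entrywise yields $\int_K D^2 p : \cP_{\polOrder-2}^K(D^2 w_h)\,\dx = \int_K D^2 p : D^2 w_h\,\dx = a^K(p,w_h)$, and applying the defining orthogonality of $\cP_{\polOrder}^K$ yields $\int_K p\, \cP_{\polOrder}^K w_h \,\dx = \int_K p\, w_h\,\dx = (p,w_h)_K$. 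This establishes both claimed identities.

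I do not expect a genuine obstacle here: the lemma is essentially a bookkeeping consequence of the way the discrete forms were assembled — polynomial reproduction of the projections kills the error in the consistency part, and the built-in $L^2$-orthogonality of the projections disposes of the remaining polynomial test function. The only points needing a line of care are the degree counts (the Hessian of a degree-$\polOrder$ polynomial must be tested against $\cP_{\polOrder-2}^K$, the value against $\cP_{\polOrder}^K$) so that the orthogonality relations genuinely apply, and the remark that the vector- and matrix-valued projections inherit this orthogonality componentwise.
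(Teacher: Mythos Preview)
Your proposal is correct and matches the paper's approach: the paper does not spell out a proof but simply states that the consistency property is immediate from Lemma~\ref{eqn: projections and l2 projection result}, and your argument is precisely the unpacking of that remark (polynomial exactness kills the stabilization, then the $L^2$-projection identities reduce the remaining integrals to $a^K(p,w_h)$ and $(p,w_h)_K$).
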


We also have the standard stability property for the\corrections{se} forms. The proof is standard and is based on a scaling argument: in the conforming case it can be found for example in \cite{beirao2017stability,brenner2018virtual} and the nonconforming case is shown in \cite{mascotto2018non} but for a different choice of stabilization $\stabilisation(\cdot,\cdot)$.

\begin{lemma}[Stability]\label{lemma: stability}
    There exist positive constants $\alpha_*, \alpha^*, \mu_*,$ and $\mu^*$ such that for all $v_h \in \localVEMSpace$
    \begin{align*}
        \alpha_* a^K (v_h,v_h) \leq \ahhessianLocal (v_h,v_h) &\leq \alpha^* a^K (v_h,v_h),
        \\
        \mu_* (v_h,v_h)_K \leq \ahlowerLocal (v_h,v_h) &\leq \mu^* (v_h,v_h)_K.
    \end{align*}
\end{lemma}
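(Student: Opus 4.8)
The plan is to run the classical virtual element scaling argument, reducing both two‑sided estimates to a single \emph{stabilisation equivalence} on the finite–dimensional space $\localVEMSpace$; the consistency parts of the forms are then essentially free thanks to Lemma~\ref{eqn: projections and l2 projection result}. I describe $\ahhessianLocal$ in detail, the argument for $\ahlowerLocal$ being identical with the $H^2$–seminorm and the weight $h_K^{-2}$ replaced by the $L^2$–norm and the weight $h_K^2$. First I would fix $v_h\in\localVEMSpace$ and set $w_h:=v_h-\valueProj v_h$. By Lemma~\ref{eqn: projections and l2 projection result}, $\valueProj v_h=\cP_{\polOrder}^K v_h$ and $\hessProj v_h=\cP_{\polOrder-2}^K(D^2 v_h)$; in particular $\valueProj$ is idempotent and $\prob_{\polOrder}(K)\subset\localVEMSpace$, so $w_h\in\localVEMSpace$ with $\valueProj w_h=0$, i.e. $w_h$ is $L^2(K)$–orthogonal to $\prob_{\polOrder}(K)$. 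From Definition~\ref{defn: discrete forms} one then has $\ahhessianLocal(v_h,v_h)=\|\hessProj v_h\|_{0,K}^2+h_K^{-2}\stabilisation(w_h,w_h)$, so everything hinges on controlling the stabilisation term.

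The core step is to establish that there are constants $0<c_\star\le c^\star$, independent of $h$, such that $c_\star|z_h|_{2,K}^2\le h_K^{-2}\stabilisation(z_h,z_h)\le c^\star|z_h|_{2,K}^2$ for all $z_h\in\localVEMSpace$ with $\valueProj z_h=0$. This is the standard VEM stabilisation estimate (conforming case \cite{beirao2017stability,brenner2018virtual}, nonconforming \cite{mascotto2018non}); I would prove it by mapping $K$ onto a reference element of unit diameter, which is legitimate since Assumption~\ref{assumption: mesh regularity} bounds the aspect ratio, the star–shapedness, and hence the number of edges uniformly. On the reference element $\stabilisation(\cdot,\cdot)$ is the sum of squares of the (appropriately normalised) degrees of freedom of Definition~\ref{defn: dof set} and hence, by their unisolvence over $\localVEMSpace$, a norm on that finite–dimensional space, therefore equivalent to $\|\cdot\|_{2,\hat K}$; the constraint $\valueProj z_h=0$ removes the affine kernel, so a Poincar\'e inequality upgrades the full norm to the seminorm $|\cdot|_{2,\hat K}$. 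Mapping back, using that the dofs are scaled so that $\stabilisation$ is scale invariant, yields the weight $h_K^{-2}$.

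With this in hand I would assemble the Hessian estimate as follows. For the upper bound, contractivity of the $L^2$–projection gives $\|\hessProj v_h\|_{0,K}\le|v_h|_{2,K}$, and the stabilisation equivalence together with $|w_h|_{2,K}\lesssim|v_h|_{2,K}$ gives $h_K^{-2}\stabilisation(w_h,w_h)\lesssim|v_h|_{2,K}^2$; summing gives $\ahhessianLocal(v_h,v_h)\le\alpha^* a^K(v_h,v_h)$. The inequality $|w_h|_{2,K}\lesssim|v_h|_{2,K}$ I would obtain by choosing an affine $q$ with $\|v_h-q\|_{0,K}\lesssim h_K^2|v_h|_{2,K}$ (Bramble--Hilbert), writing $D^2 w_h=D^2(v_h-q)-D^2\cP_{\polOrder}^K(v_h-q)$ and applying a polynomial inverse inequality to $\cP_{\polOrder}^K(v_h-q)$, whose $L^2$–norm is bounded by $\|v_h-q\|_{0,K}$. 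For the lower bound, since $D^2\valueProj v_h\in\prob_{\polOrder-2}(K)$ one has $(I-\cP_{\polOrder-2}^K)D^2 v_h=(I-\cP_{\polOrder-2}^K)D^2 w_h$, and the Pythagoras identity gives $|v_h|_{2,K}^2=\|\hessProj v_h\|_{0,K}^2+\|(I-\cP_{\polOrder-2}^K)D^2 w_h\|_{0,K}^2\le\|\hessProj v_h\|_{0,K}^2+|w_h|_{2,K}^2$; bounding $|w_h|_{2,K}^2$ above by $c_\star^{-1}h_K^{-2}\stabilisation(w_h,w_h)$ gives $|v_h|_{2,K}^2\le\max(1,c_\star^{-1})\ahhessianLocal(v_h,v_h)$, i.e. $\alpha_*=\min(1,c_\star)$.

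For $\ahlowerLocal$ one argues the same way starting from $\|v_h\|_{0,K}^2=\|\valueProj v_h\|_{0,K}^2+\|w_h\|_{0,K}^2$; the only extra ingredient is the two–sided estimate $h_K^2|w_h|_{2,K}\simeq\|w_h\|_{0,K}$, one direction of which is iterated Poincar\'e (recall $w_h\perp\prob_1(K)$) and the other an inverse inequality on the enlarged space $\localenlargedVemSpace$ (note $w_h\in\localenlargedVemSpace$ by Definition~\ref{defn: local enlarged space}). I expect the main obstacle to be this inverse inequality on $\localenlargedVemSpace$, together with the stabilisation equivalence of the core step: these are not automatic for non‑polynomial virtual functions and must be derived from the PDE characterisation of the space and the uniform mesh regularity of Assumption~\ref{assumption: mesh regularity}; everything else reduces to polynomial scaling and Poincar\'e–type inequalities that are routine under that assumption.
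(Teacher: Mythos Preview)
Your proposal is correct and follows precisely the standard scaling argument that the paper invokes; note that the paper itself does not prove the lemma but only cites \cite{beirao2017stability,brenner2018virtual} (conforming) and \cite{mascotto2018non} (nonconforming) for this result. Your sketch is a faithful fleshing-out of those references, and you correctly isolate the two genuinely nontrivial ingredients---the stabilisation equivalence on $\ker\valueProj$ and the inverse inequality $h_K^2|w_h|_{2,K}\lesssim\|w_h\|_{0,K}$ on the virtual space---both of which require the PDE characterisation of $\localenlargedVemSpace$ together with Assumption~\ref{assumption: mesh regularity}; one minor caveat is that for general polygons there is no single reference element, so the ``mapping to a reference element'' should be read as scaling to unit diameter and arguing uniformly over the compact class of admissible shapes, as in the cited works.
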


Finally, we note that there exists an interpolation operator $\globalInterpolation$, defined in the usual way, which satisfies interpolation estimates \cite{beirao_da_veiga_basic_2013,mora2015virtual} i.e. under Assumption~\ref{assumption: mesh regularity}, for $s=0,1,2$ and any $w \in H^m({\element})$ with $s\leq m\leq \polOrder+1$, the following estimate holds  
\begin{align}\label{eqn: approximation properties of interpolation operator}
    | w - \globalInterpolation w |_{s,K} \lesssim h^{m-s} |w |_{m,K}.
\end{align}
\section{Error analysis of the semidiscrete scheme}\label{sec: error anal}
In this section we detail the error analysis for the semidiscrete (continuous-in-time) scheme. 
\corrections{We focus on the spatial discretization,} proving $L^2$ convergence of the scheme \eqref{eqn: semidiscrete scheme} in Theorem \ref{thm: L2 convergence}. 
\corrections{S}tandard arguments can be employed in the fully discrete case (using the methods for example in \cite{thomee2007galerkin})\corrections{.}

Firstly, we make the following assumption on the discrete solution $u_h$ of \eqref{eqn: semidiscrete scheme}, \corrections{which is standard and well accepted in the analysis of this problem \cite{antonietti_$c^1$_2016}}. 
For more details on this assumption and for a full justification see \cite{elliott1989nonconforming}.
\begin{assumption}\label{assumption: boundedness of discrete solution}
    The solution $u_h$ to \eqref{eqn: semidiscrete scheme} satisfies for all $t \in (0,T]$ 
    \begin{align*}
        \| u_h(\cdot,t) \|_{1,\infty;h} \leq C_T
    \end{align*}
    for a constant $C_T$ independent of $h$, which depends on $T$, where 
    \begin{align*}
        \| v_h \|_{m,\infty;h} := \max_{\substack{1 \leq j \leq m \\ K \in \cT_h}} | v_h |_{j,\infty,K}. 
    \end{align*}
\end{assumption}

\corrections{Note that in the following we do not include the dependence of $u$ and $u_h$ on time $t$ and the bounds involving $u$ or $u_h$ hold for all $t \in (0,T]$.
The proof techniques in this chapter follow along the lines of \cite{antonietti_$c^1$_2016}.}

\subsection{The elliptic projection}
In this subsection we introduce the elliptic projection, which is fundamental for the proof of the main $L^2$ convergence theorem detailed in Theorem \ref{thm: L2 convergence}.
To this end, we define the elliptic projection $\ellipticProj v \in \vemSpace$ for $v \in H^{4}(\Omega)$ as the solution of 
\begin{align}\label{eqn: elliptic proj}
    \bh ( \ellipticProj v, \psi_h) = (\eps^2 \Delta^2 v - \nabla \cdot (\phi^{\prime} (u) \nabla v) + \alpha v, \psi_h ) \quad \forall \, \psi_h \in \vemSpace 
\end{align}
where the bilinear form $\bh (\cdot,\cdot)$ is defined as 
\begin{align}\label{eqn: bilinear form b_h}
    \bh (v_h , w_h) := \eps^2 a_h (v_h,w_h) + r_h(u;v_h,w_h) + \alpha(v_h,w_h),
\end{align}
for a positive $\alpha$, chosen so that the bilinear form $b_h$ is coercive.

The main results of this section are the approximation properties of $\ellipticProj$ detailed in Lemma~\ref{lemma: elliptic proj bounds}.
Before we state these properties, we require the following lemma, the proof of which can be found in \cite{10.1093/imanum/drab003}.
\begin{lemma}\label{lemma: nonconformity error}
    \corrections{For the solution} $u \in H^{\polOrder+1}(\Omega)$ \corrections{to \eqref{eqn: cts weak form} and $w \in \HTwoNCSpace$} the nonconformity error satisfies  
    \begin{align*}
        \left| \nonconformity (u,w) \right|
        =
        \left|
        \eps^2 \sum_{\element \in \mesh}
        \int_{\partial \element} \left( ( \Delta u - \partial_{ss} u) \partial_n w
        +
        \partial_{ns} u \partial_s w 
        - \partial_n (\Delta u) w
        \right)
        \, \ds 
        \right|
        \lesssim h^{\polOrder-1} |w|_{2,h}\corrections{,}
    \end{align*}
    \corrections{where the nonconformity error is defined as follows}
    \begin{align}\label{eqn: nonconformity defn}
        \corrections{\nonconformity (u,w) := \eps^2 \sum_{\element \in \mesh} a^{\element}(u,w) - (\Delta^2 u,w).}
    \end{align}
\end{lemma}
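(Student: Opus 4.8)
The plan is to derive the identity for $\nonconformity(u,w)$ by integrating by parts twice on each element, and then to bound the resulting boundary terms using the nonconformity properties of the space $\HTwoNCSpace$ together with the edge approximation estimates from Theorem~\ref{thm: interpolation estimates}. First I would start from the definition \eqref{eqn: nonconformity defn}, namely $\nonconformity(u,w) = \eps^2 \sum_{\element} a^{\element}(u,w) - (\Delta^2 u, w)$. On a single element $\element$, I would apply Green's formula to $a^{\element}(u,w) = \int_K D^2 u : D^2 w\,\dx$. Integrating by parts once moves one derivative off $w$ and produces a term $-\int_K \nabla(\Delta u)\cdot\nabla w\,\dx$ plus a boundary contribution involving second derivatives of $u$ paired with $\nabla w$; decomposing $\nabla w$ on $\partial K$ into its normal and tangential components $\partial_n w$ and $\partial_s w$ gives exactly the combination $(\Delta u - \partial_{ss}u)\partial_n w + \partial_{ns}u\,\partial_s w$ after re-expressing $D^2 u(n,n)$ and $D^2 u(n,\tau)$ appropriately (using that $D^2 u : (n\otimes n) = \partial_{nn}u = \Delta u - \partial_{ss}u$ on the boundary up to curvature terms which vanish on straight edges). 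Integrating by parts a second time on $\int_K \nabla(\Delta u)\cdot\nabla w\,\dx$ yields $-\int_K \Delta^2 u\, w\,\dx + \int_{\partial K}\partial_n(\Delta u)\,w\,\ds$. Summing over $\element \in \mesh$, the volume terms collapse to $(\Delta^2 u, w)$, which cancels against the second term in \eqref{eqn: nonconformity defn}, leaving precisely the asserted boundary sum $\eps^2\sum_{\element}\int_{\partial\element}\big((\Delta u - \partial_{ss}u)\partial_n w + \partial_{ns}u\,\partial_s w - \partial_n(\Delta u)w\big)\,\ds$.

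For the estimate, I would rewrite the boundary sum as a sum over edges $e \in \mathcal{E}_h$, noting that on interior edges the contributions from the two adjacent elements combine into jump terms $[\partial_n w]$, $[\partial_s w]$ (equivalently $\partial_s[w]$) and $[w]$, while on boundary edges the natural boundary conditions $\partial_n u = 0$ on $\partial\Omega$ (and consequently $\partial_n(\Delta u)=0$, $\partial_{ss}u$ controlled) kill or reduce the relevant terms. The key point is that since $u$ is smooth (globally $H^{\polOrder+1}$), on each interior edge $e$ the trace of each of the quantities $\Delta u - \partial_{ss}u$, $\partial_{ns}u$, $\partial_n(\Delta u)$ is single-valued, so I may subtract from each its edge $L^2$-projection: for a polynomial $q$ of suitable degree, $\int_e q\,[\partial_n w]\,\ds = 0$, $\int_e q\,\partial_s[w]\,\ds = -\int_e (\partial_s q)[w]\,\ds = 0$ after integrating by parts along $e$ and using the vertex continuity of $w$, and $\int_e q\,[w]\,\ds = 0$, by the defining orthogonality relations of $\HTwoNCSpace$ in Definition~\ref{defn: H2 nonconforming space}. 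Choosing $q = \cP^e_{\polOrder-2}(\partial_{ns}u)$, $q = \cP^e_{\polOrder-2}(\partial_n(\Delta u))$ and for the first term a projection of degree matching the available test polynomials, each boundary term becomes $\int_e (\xi - \cP^e\xi)(\text{trace of derivative of }w)\,\ds$ for the appropriate third-order-derivative quantity $\xi$ of $u$.

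Then Theorem~\ref{thm: interpolation estimates} gives $|\xi - \cP^e_{\polOrder-2}\xi|_{0,e} \lesssim h_e^{(\polOrder-1)-1/2}|\xi|_{\polOrder-1,\omega_e}$ where $|\xi|_{\polOrder-1,\omega_e}$ involves $H^{\polOrder+1}$-type norms of $u$ on the edge patch $\omega_e$, while a trace/scaling inequality bounds $\|\partial_n w\|_{0,e}$ and $\|\partial_s w\|_{0,e}$ by $h_e^{-1/2}|w|_{2,\omega_e}$ (using Assumption~\ref{assumption: mesh regularity} so that $h_e \simeq h_K$, plus an inverse-type estimate for the non-smooth $w$; alternatively a standard scaled trace inequality on each $K$). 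Multiplying, summing over edges with Cauchy--Schwarz, and using $h_e\simeq h$ uniformly yields $|\nonconformity(u,w)| \lesssim h^{\polOrder-1}|u|_{\polOrder+1,\Omega}|w|_{2,h}$, which (absorbing the fixed factor $|u|_{\polOrder+1,\Omega}$ into the implied constant, as the statement does) gives the claim. The main obstacle I anticipate is the careful bookkeeping at the boundary edges and at vertices: one must check that the tangential integration by parts along $e$ produces no leftover vertex terms (this uses continuity of $w$ at vertices), that the degree of the subtracted polynomial is compatible with the moment conditions in Definition~\ref{defn: H2 nonconforming space} for each of the three boundary terms (the $[w]$ term needs $\prob_{\polOrder-3}(e)$ and the $[\partial_n w]$ term needs $\prob_{\polOrder-2}(e)$, so the first term in the boundary sum is the delicate one and must be handled by first integrating by parts along the edge to convert $\partial_n w$-type pairings appropriately), and that the natural boundary conditions on $\partial\Omega$ genuinely eliminate the boundary-edge contributions — since this is cited as proved in \cite{10.1093/imanum/drab003}, I would mainly need to confirm the geometry (straight edges, so no curvature corrections) and invoke that reference for the remaining routine details.
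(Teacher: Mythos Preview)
The paper does not actually prove this lemma here; it cites \cite{10.1093/imanum/drab003} for the proof. Your overall plan---integrate by parts twice to obtain the stated boundary representation, rewrite as a sum over edges of jump terms, insert edge projections of the smooth factors using the orthogonality relations in Definition~\ref{defn: H2 nonconforming space}, and close with Theorem~\ref{thm: interpolation estimates}---is the standard argument and is essentially what the cited reference carries out.

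There is, however, a genuine slip in your trace step. You claim $\|\partial_n w\|_{0,e}\lesssim h_e^{-1/2}|w|_{2,\omega_e}$, but this is dimensionally inconsistent and false for general $w\in\HTwoNCSpace$: the scaled trace inequality applied to $\nabla w\in H^1(K)$ only gives $\|\partial_n w\|_{0,e}\lesssim h_e^{-1/2}|w|_{1,K}+h_e^{1/2}|w|_{2,K}$, and no inverse estimate is available since $\HTwoNCSpace$ is not a finite-dimensional discrete space. Paired with your projection bound $\|\xi-\cP^e_{\polOrder-2}\xi\|_{0,e}\lesssim h_e^{\polOrder-3/2}$, the product then contains a term $h^{\polOrder-2}|w|_{1,h}$, which is one power of $h$ short of the claim. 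The fix is to use the moment conditions a \emph{second} time, now on the jump factor: since $[\partial_n w]\perp\prob_0(e)$, one has $\cP^e_0(\partial_n w^+)=\cP^e_0(\partial_n w^-)$, so subtracting from each side the edge mean of $\nabla w|_{K^\pm}$ and applying Poincar\'e on $K^\pm$ before the trace inequality yields $\|[\partial_n w]\|_{0,e}\lesssim h_e^{1/2}|w|_{2,\omega_e}$. An analogous argument (using vertex continuity and the $\prob_{\polOrder-3}$-orthogonality of $[w]$) handles $[w]$ and $\partial_s[w]$. With this correction the product is $h_e^{\polOrder-3/2}\cdot h_e^{1/2}=h_e^{\polOrder-1}$ and the rest of your outline goes through.
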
 

\ifthenelse{\boolean{thesis}}
{
    \begin{align*}
        r_h ( u; v_h, v_h) 
        = 
        \sum_{K \in \mesh} \int_K  \phi^{\prime} (u) \, \gradProj v_h \cdot \gradProj v_h \, \dx  
        &\leq  
        \sum_{K \in \mesh}  \| \phi^{\prime}(u) \ltwo_{\polOrder-1} \nabla v_h \|_{0,K} \| \ltwo_{\polOrder-1} \nabla v_h \|_{0,K}
        \\
        &\leq 
        \sum_{K \in \mesh} 
        \| \phi^{\prime}(u) \|_{L^{\infty}} | v_h |_{1,K}^2
        \\
        &= 
        \| \phi^{\prime}(u) \|_{L^{\infty}} |v_h |_{1,h}^2
    \end{align*}
}
{}

\begin{lemma}\label{lemma: elliptic proj bounds}
    Let \corrections{$u \in H^{4}(\Omega) \cap H^{\polOrder+1}(\Omega)$} be the solution to \eqref{eqn: cts weak form} and let $\ellipticProj u$ be the elliptic projection defined in \eqref{eqn: elliptic proj}. Then, it holds that
    \begin{align}
        \|u -\ellipticProj u \|_{2,h} &\lesssim h^{\polOrder-1}, 
        \label{eqn: elliptic proj in 2norm}
        \\
        \| u - \ellipticProj u \|_{1,h} &\lesssim h^{\polOrder}, 
        \label{eqn: elliptic proj in 1norm}
        \\
        \| u_t - (\ellipticProj u)_t \|_{2,h} &\lesssim h^{\polOrder-1}, 
        \label{eqn: elliptic proj time in 2 norm} \\
        \| u_t - (\ellipticProj u)_t \|_{1,h} &\lesssim h^{\polOrder}.
        \label{eqn: elliptic proj time in 1 norm}
    \end{align}
\end{lemma}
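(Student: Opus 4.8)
The plan is to treat $\ellipticProj$ exactly like the Ritz/elliptic projection in a nonconforming finite element analysis: combine a Strang--C\'ea argument with the consistency and nonconformity estimates already at our disposal, and then transfer everything to the time derivative by differentiating the defining relation \eqref{eqn: elliptic proj}. First I would check that $\bh(\cdot,\cdot)$ is continuous and coercive on $\vemSpace$ for the broken norm $\vertiii{v_h}^2:=|v_h|_{2,h}^2+\|v_h\|_{0}^2$, so that \eqref{eqn: elliptic proj} is well posed; coercivity uses the lower bound of Lemma~\ref{lemma: stability} for $\ahhessianLocal$, positivity of $\stabilisation$, the elementary bound $\phi'\geq-1$ together with $\|\gradProj v_h\|_{0,K}\leq|v_h|_{1,K}$ for the remaining part of $\rh$, a broken interpolation/Poincar\'e inequality on $\HTwoNCSpace$ (controlling $|v_h|_{1,h}$ and $\|v_h\|_0$ by $|v_h|_{2,h}$ up to lower order terms), and then taking $\alpha$ large. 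The same estimates show that $\ahhessian$, $\rh$ and hence $\bh$ extend to arguments in $H^{\polOrder+1}(\Omega)$, which is exactly what the remark following Definition~\ref{defn: discrete forms} anticipates and what is needed below.

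Next, for $w_h\in\vemSpace$ I would expand $\bh(u-\ellipticProj u,w_h)=\bh(u,w_h)-(\eps^2\Delta^2 u-\nabla\cdot(\phi'(u)\nabla u)+\alpha u,w_h)$ and integrate by parts element by element; using \eqref{eqn: nonconformity defn}, the continuity of $\phi'(u)\partial_n u$ across interior edges and the boundary condition $\partial_n u=0$, one arrives at
\begin{align*}
\bh(u-\ellipticProj u,w_h)={}&\eps^2\big(\ahhessian(u,w_h)-{\textstyle\sum_{K}}a^K(u,w_h)\big)+\big(\rh(u;u,w_h)-r(u;u,w_h)\big)\\
&{}+\nonconformity(u,w_h)+\sum_{e\in\mathcal{E}_h^{\text{int}}}\int_e\phi'(u)\,\partial_n u\,[w_h]\,\ds .
\end{align*}
Each of the four terms on the right should be $\lesssim h^{\polOrder-1}\vertiii{w_h}$. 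For the first, insert $p=\ltwo_{\polOrder}u$, kill the polynomial part with Lemma~\ref{lemma: consistency}, and bound the remainder with Lemma~\ref{lemma: stability} and Theorem~\ref{thm: interpolation estimates}. For the second, use that the projections are computed from the dofs, so that $\gradProj u=\cP^K_{\polOrder-1}\nabla(\globalInterpolation u)$ (Lemma~\ref{eqn: projections and l2 projection result} applied to $\globalInterpolation u\in\vemSpace$), split the integrand as $(\gradProj u-\nabla u)\cdot\gradProj w_h+\nabla u\cdot(\gradProj w_h-\nabla w_h)$, exploit the $L^2$-orthogonality of $\cP^K_{\polOrder-1}$ in the second piece, and apply \eqref{eqn: approximation properties of interpolation operator}, Theorem~\ref{thm: interpolation estimates} and $\|\phi'(u)\|_{L^\infty(\Omega)},\|\nabla u\|_{L^\infty(\Omega)}\lesssim1$ (from $u\in H^4(\Omega)$); this term is in fact $O(h^{\polOrder})$. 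The third term is precisely Lemma~\ref{lemma: nonconformity error}, and the fourth --- the nonconformity of the nonlinear part --- I would reduce, by integrating by parts along each edge and using that $[w_h]$ vanishes at the vertices together with the moment conditions defining $\HTwoNCSpace$, to edge integrals handled by the boundary-term estimate Lemma~\ref{lemma: boundary term bound} (which relies on the sub-triangulation of Remark~\ref{assump: star shaped wrt a ball}).

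Estimate \eqref{eqn: elliptic proj in 2norm} then follows from a Strang argument: $|u-\ellipticProj u|_{2,h}\leq|u-\globalInterpolation u|_{2,h}+\vertiii{\globalInterpolation u-\ellipticProj u}$, the first term by \eqref{eqn: approximation properties of interpolation operator} and the second by coercivity, $\vertiii{\globalInterpolation u-\ellipticProj u}^2\lesssim\bh(\globalInterpolation u-u,\globalInterpolation u-\ellipticProj u)+\bh(u-\ellipticProj u,\globalInterpolation u-\ellipticProj u)$, whose first summand is handled by continuity and \eqref{eqn: approximation properties of interpolation operator} and whose second is the bound just obtained with $w_h=\globalInterpolation u-\ellipticProj u\in\vemSpace$. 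Estimate \eqref{eqn: elliptic proj in 1norm} is obtained by an Aubin--Nitsche duality argument based on the adjoint of $\bh$ and the $H^{3}$-regularity of the dual solution, which also picks up (controllable) nonconformity contributions of the dual solution and gains one power of $h$. Finally, for \eqref{eqn: elliptic proj time in 2 norm}--\eqref{eqn: elliptic proj time in 1 norm} I would differentiate the error equation in $t$: since the only $t$-dependence of $\bh$ is through $u$ in the first slot of $\rh$, one has $(\partial_t\bh)(v_h,w_h)=\sum_K\int_K\phi''(u)\,u_t\,\gradProj v_h\cdot\gradProj w_h\,\dx$, so $\bh(u_t-(\ellipticProj u)_t,w_h)$ equals $\partial_t$ of the right-hand side above minus $(\partial_t\bh)(u-\ellipticProj u,w_h)$; the latter is $O(h^{\polOrder})\vertiii{w_h}$ by \eqref{eqn: elliptic proj in 2norm}--\eqref{eqn: elliptic proj in 1norm}, while $\partial_t$ of the four consistency/nonconformity terms are of exactly the same type with $u$ replaced by $u_t$ (plus lower-order factors), hence $\lesssim h^{\polOrder-1}\vertiii{w_h}$ provided $u_t\in H^{\polOrder+1}(\Omega)$ (the extra regularity of $u_t$ mentioned after \eqref{eqn: cts weak form}). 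One then repeats the Strang and duality arguments with $u$ replaced by $u_t$.

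The delicate point --- and the one I expect to be the main obstacle --- is the last term of the error equation: a crude Cauchy--Schwarz bound on $\sum_e\int_e\phi'(u)\partial_n u\,[w_h]\,\ds$ loses a power of $h$ (it gives only $O(h^{\polOrder-2})$), so one must fully exploit the vertex continuity and the edge moment conditions built into $\HTwoNCSpace$, via repeated integration by parts along each edge together with the sub-triangulation estimate of Lemma~\ref{lemma: boundary term bound}, in order to recover the optimal order $h^{\polOrder-1}$. The time-differentiated analogue of this term and the nonconformity of the dual solution in the last two estimates demand exactly the same care, and keeping track of which Sobolev regularity of $u$ (and of $u_t$) each bound consumes is the other bookkeeping hurdle.
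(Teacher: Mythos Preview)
Your high-level plan (Strang--C\'ea for \eqref{eqn: elliptic proj in 2norm}, Aubin--Nitsche duality for \eqref{eqn: elliptic proj in 1norm}, then differentiate in time for \eqref{eqn: elliptic proj time in 2 norm}--\eqref{eqn: elliptic proj time in 1 norm}) is correct and matches the paper in spirit, but the execution differs in two places worth noting.

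\textbf{The duality step.} The paper never extends $\bh$ to non-VEM arguments; instead it keeps both arguments of $\bh$ in $\vemSpace$ throughout and pushes all the work onto the \emph{continuous} form $b$. Concretely, with $\rho=u-\ellipticProj u$ it writes $\|\rho\|_{1,h}^2=[L_h(\rho)-b(z,\rho)]+b(z,\rho)$, bounds $b(z,\rho)$ by a separate lemma comparing $b$ and $\bh$ term by term (this is the lengthy part, relying on consistency and the $L^2$-projection identities of Lemma~\ref{eqn: projections and l2 projection result}), and for $L_h(\rho)-b(z,\rho)$ introduces the lowest-order \emph{$H^1$-conforming} VEM interpolant $\rho^*$ of $\rho$. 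The point of $\rho^*$ is that the identity $\eps^2(-\nabla\Delta z,\nabla v)+r(u;z,v)+\alpha(z,v)=L_h(v)$ holds for every $v\in H^1(\Omega)$, so one can add and subtract with $v=\rho^*$ and reduce everything to $\|\rho-\rho^*\|_{1,h}\lesssim h|\rho|_{2,h}$ plus a nonconformity boundary sum. Your Aubin--Nitsche sketch is correct but does not name this device; without it the duality argument for a nonconforming space is awkward.

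\textbf{The time-derivative estimates.} Rather than differentiating the error equation and redoing Strang, the paper splits $u_t-(\ellipticProj u)_t=(u_t-\ellipticProj(u_t))+(\ellipticProj(u_t)-(\ellipticProj u)_t)$. The first piece is exactly \eqref{eqn: elliptic proj in 2norm}--\eqref{eqn: elliptic proj in 1norm} applied to $u_t$. For the second, coercivity of $\bh$ on $\vemSpace$ gives $\|\ellipticProj(u_t)-(\ellipticProj u)_t\|_{2,h}^2\lesssim\bh(\ellipticProj(u_t)-(\ellipticProj u)_t,\,\eta_h)$ with $\eta_h:=\ellipticProj(u_t)-(\ellipticProj u)_t$, and after inserting the defining equations this collapses to the single commutator $(\phi''(u)u_t\,\gradProj\ellipticProj u,\gradProj\eta_h)-(\phi''(u)u_t\,\nabla u,\nabla\eta_h)$, handled in one short lemma. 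This is cleaner than your route: you never need to plug $u$ or $u_t$ into $\bh$, and you avoid repeating the four-term consistency analysis.

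Finally, your ``delicate fourth term'' $\sum_e\int_e\phi'(u)\partial_n u\,[w_h]\,\ds$ is less dangerous than you fear. Since $[w_h]$ is orthogonal on each edge to $\prob_{\polOrder-3}(e)$ \emph{and} vanishes at the endpoints, one subtracts the $L^2(e)$-projection of $\phi'(u)\partial_n u$ onto $\prob_{\polOrder-3}(e)$ (gaining $h^{\polOrder-5/2}$) and bounds $\|[w_h]\|_{0,e}\lesssim h^{3/2}|w_h|_{2}$ via the linear Lagrange interpolant on the sub-triangle $T(e)$---exactly the trick in the proof of Lemma~\ref{lemma: boundary term bound}. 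No repeated integration by parts is needed; the product is $O(h^{\polOrder-1})$ directly.
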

Proof of the estimate \eqref{eqn: elliptic proj in 2norm} in Lemma \ref{lemma: elliptic proj bounds} is a direct consequence of the energy norm convergence proof covered in \cite[Theorem 5.7]{10.1093/imanum/drab003} for a general fourth-order problem with varying coefficients.

In order to prove \eqref{eqn: elliptic proj in 1norm} we first study the following problem: find $z \in V$ such that 
\begin{align}\label{eqn: dual problem}
    b(z,w) = (u-\ellipticProj u, w)_{1,h} + (u-\ellipticProj u, w)_{0,h} =: L_h(w)
    \quad \forall w \in V 
\end{align}
where the bilinear form $b(\cdot,\cdot)$ is defined as
\begin{align*}
    b(v,w) &:= \eps^2 a(v,w) + r(u;v,w) + \alpha (v,w)
\end{align*}
for all $v,w \in V$. 
We assume the validity of the following regularity result which is shown in the case of a rectangular domain in \cite{elliott1989nonconforming}: there exists a solution $z \in H^{3}(\Omega)$ to \eqref{eqn: dual problem} such that
\begin{align}\label{eqn: elliptic regularity}
    \| z \|_{3,\Omega} 
    \leq C_{\Omega} \| u - \ellipticProj u \|_{1,h},
\end{align}
where $C_{\Omega}$ depends only on $\Omega$.

\begin{lemma}\label{lemma: term 2}
    Let \corrections{$u \in H^{4}(\Omega) \cap H^{\polOrder+1}(\Omega)$} be the solution to \eqref{eqn: cts weak form} and let $\ellipticProj u$ be the elliptic projection defined in \eqref{eqn: elliptic proj}. 
    For the solution $z \in H^3(\Omega)$ of the dual problem \eqref{eqn: dual problem}, it holds that  
    \begin{align}\label{eqn: lemma term 2}
        \left| b(z,u-\ellipticProj u) \right|
        \lesssim 
        h^{\polOrder} \| u -\ellipticProj u \|_{1,h}.
    \end{align}
\end{lemma}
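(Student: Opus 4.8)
The plan is to exploit the Galerkin orthogonality built into the elliptic projection \eqref{eqn: elliptic proj} against the dual solution $z$, then transfer $z$ into the discrete space via its interpolant $I_h z$ and control the resulting consistency/nonconformity errors. Write $e := u - \ellipticProj u$. Since $z \in H^3(\Omega) \subset V$, the defining relation \eqref{eqn: elliptic proj} gives, for $I_h z \in \vemSpace$,
\begin{align*}
    \bh(\ellipticProj u, I_h z) = (\eps^2 \Delta^2 u - \nabla\cdot(\phi'(u)\nabla u) + \alpha u, I_h z).
\end{align*}
On the other hand, the dual problem \eqref{eqn: dual problem} reads $b(z, u - \ellipticProj u) = L_h(u - \ellipticProj u)$, which is the quantity we want to bound, and by symmetry of $b$ this equals $b(u - \ellipticProj u, z)$. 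The strategy is to insert $I_h z$ and split
\begin{align*}
    b(e, z) = \underbrace{\big( b(e,z) - \bh(e, I_h z) \big)}_{\text{(I): continuous vs.\ discrete forms}}
    + \underbrace{\bh(e, I_h z)}_{\text{(II)}},
\end{align*}
where in term (II) we use $\bh(\ellipticProj u, I_h z) = (\eps^2\Delta^2 u - \nabla\cdot(\phi'(u)\nabla u) + \alpha u, I_h z)$ together with an integration by parts that rewrites the right-hand side, testing against $u$, as $\bh(u, I_h z)$ up to the nonconformity error $\nonconformity(u, I_h z)$ (and the analogous lower-order boundary terms coming from the $r$ and mass parts). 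Thus term (II) collapses to (minus) the nonconformity error $\nonconformity(u, I_h z)$, which by Lemma~\ref{lemma: nonconformity error} is $\lesssim h^{\polOrder-1}|I_h z|_{2,h}$; but since we are testing the consistency identity against $u$ and the elliptic-projection identity against $\ellipticProj u$, a further cancellation should reduce the bad power: more precisely one writes $\nonconformity(u, I_h z) = \nonconformity(u, I_h z - z)$ using that $z \in V$ (so $\nonconformity(u,z)=0$ via integration by parts in the broken sense), and then $|I_h z - z|_{2,h} \lesssim h |z|_{3,\Omega} \lesssim h \|e\|_{1,h}$ by \eqref{eqn: approximation properties of interpolation operator} and \eqref{eqn: elliptic regularity}. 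Combined with the already-established $\|e\|_{2,h} \lesssim h^{\polOrder-1}$ from \eqref{eqn: elliptic proj in 2norm}, this yields the $h^{\polOrder}\|e\|_{1,h}$ bound for (II).

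For term (I), the difference between the continuous form $b(e, z)$ and the discrete form $\bh(e, I_h z)$ is handled by standard VEM consistency/stability estimates. One inserts an intermediate step $\bh(e, I_h z) - \bh(I_h u - \ellipticProj u$-type quantities, but more cleanly: write $b(e,z) - \bh(e, I_h z) = \big(b(e,z) - b(e, \Pi^h_2$-projected versions$)\big) + \dots$; concretely, using Lemma~\ref{lemma: consistency} (polynomial consistency) and Lemma~\ref{lemma: stability} together with the $L^2$-projection identities of Lemma~\ref{eqn: projections and l2 projection result}, each piece (the Hessian form $a_h$, the semilinear form $r_h$ with coefficient $\phi'(u)$, and the $\alpha$-mass form) contributes a term bounded by $\|e\|_{2,h}$ (resp.\ $\|e\|_{1,h}$, $\|e\|_{0,h}$) times the corresponding approximation error of $z$, i.e.\ $h |z|_{3,\Omega} \lesssim h\|e\|_{1,h}$, plus the interpolation error $|z - I_h z|_{2,h} \lesssim h|z|_{3,\Omega}$. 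The worst contribution is again $\|e\|_{2,h} \cdot h |z|_{3,\Omega} \lesssim h^{\polOrder-1} \cdot h \cdot \|e\|_{1,h} = h^{\polOrder}\|e\|_{1,h}$, which matches the claimed bound. Smoothness of $\phi'$ and Assumption~\ref{assumption: boundedness of discrete solution} (or rather the $L^\infty$ regularity of $u$) are used to control the variable coefficient in $r$ uniformly.

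The main obstacle I anticipate is the careful bookkeeping in term (II): one must integrate by parts elementwise in the identity defining $\ellipticProj u$, correctly identify \emph{all} the boundary terms — not only the fourth-order ones captured by $\nonconformity(u,\cdot)$ in Lemma~\ref{lemma: nonconformity error} but also the lower-order jump terms produced by $\nabla\cdot(\phi'(u)\nabla u)$ against a nonconforming test function — and then verify that replacing the test function $I_h z$ by $I_h z - z$ is legitimate (it is, because $z$ lies in the conforming space $V$ so its jumps vanish and the broken integration by parts has no interface contribution). Getting the extra factor of $h$ out of $|I_h z - z|_{2,h}$ rather than settling for $|I_h z - z|_{2,h} \lesssim \|z\|_{3,\Omega}$ is exactly what upgrades the estimate from $h^{\polOrder-1}$ to the optimal $h^{\polOrder}$, so the duality (Aubin--Nitsche) structure and the $H^3$ elliptic regularity \eqref{eqn: elliptic regularity} are essential and must be invoked precisely.
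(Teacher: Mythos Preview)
Your overall strategy matches the paper's: insert the interpolant $I_h z$, exploit the elliptic-projection identity \eqref{eqn: elliptic proj}, gain the extra factor of $h$ from $|I_h z - z|_{2,h}\lesssim h\|z\|_{3,\Omega}$ combined with the $H^3$ regularity \eqref{eqn: elliptic regularity}, and control the remaining VEM consistency errors. You even flag the lower-order boundary terms coming from $r$, which the paper glosses over.

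The problem is your specific decomposition. Writing $b(e,z)=[b(e,z)-b_h(e,I_hz)]+b_h(e,I_hz)$ forces you to evaluate $b_h(u,I_hz)$, i.e.\ the \emph{discrete} form at the \emph{continuous} solution $u\notin V_{h,\ell}$. The projections $\Pi_s^K$ are defined on the (enlarged) VEM space, and the crucial identity $\Pi_s^K v_h=\mathcal P^K_{\ell-s}(D^s v_h)$ of Lemma~\ref{eqn: projections and l2 projection result} holds only for $v_h\in V_{h,\ell}^K$; the consistency/stability Lemmas~\ref{lemma: consistency}--\ref{lemma: stability} likewise apply only to discrete arguments. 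Consequently your claim that term (II) ``collapses to (minus) the nonconformity error'' is not right: integrating by parts in $(\eps^2\Delta^2 u-\nabla\!\cdot(\phi'(u)\nabla u)+\alpha u,I_hz)$ produces $b(u,I_hz)$, not $b_h(u,I_hz)$, so term (II) equals $[b_h(u,I_hz)-b(u,I_hz)]+\mathcal N(u,I_hz)+\text{(lower-order jumps)}$, and the first bracket is an uncontrolled remainder that the standard VEM lemmas do not handle.

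The paper avoids this by splitting $b(e,z)=b(e,z-I_hz)+b(e,I_hz)$ instead. The first piece is bounded directly by continuity and interpolation. In the second piece one writes $b(e,I_hz)=b(u,I_hz)-b(\ellipticProj u,I_hz)$, integrates by parts in $b(u,I_hz)$ to get $(\text{RHS},I_hz)+\mathcal N(u,I_hz-z)$ (your observation $\mathcal N(u,z)=0$ is used here), and then replaces $(\text{RHS},I_hz)$ by $b_h(\ellipticProj u,I_hz)$ via \eqref{eqn: elliptic proj}. What remains is $\mathcal N(u,I_hz-z)$ plus the consistency error $(b_h-b)(\ellipticProj u,I_hz)$, where now \emph{both} arguments are discrete, so Lemmas~\ref{lemma: consistency}--\ref{lemma: stability} and \ref{eqn: projections and l2 projection result} apply cleanly. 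Your ingredients are all correct; only the split needs to be reorganised so that the discrete form $b_h$ is never applied to $u$.
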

We give the proof of Lemma \ref{lemma: term 2} in Appendix \ref{sec: appendix}. 
\ifthenelse{\boolean{thesis}}{
Want to define the functional $l_h$ such that 
\begin{align*}
    \| l_h \|_{-1,h} &\leq \| u - \ellipticProj u \|_{1,h}
    \intertext{and}
    l_h (u - \ellipticProj u) &= \| u - \ellipticProj u \|_{1,h}^2
\end{align*}
clear that this is satisfied by choosing
\begin{align*}
    l_h (w_h) := \sum_{K \in \mesh} \int_K \nabla (u-\ellipticProj u) \cdot \nabla w_h + \int_K (u-\ellipticProj u) \, w_h 
\end{align*}
for any $w_h \in \vemSpace$.
}
{}
Also necessary for the proof of \eqref{eqn: elliptic proj time in 2 norm}-\eqref{eqn: elliptic proj time in 1 norm} in Lemma \ref{lemma: elliptic proj bounds} is the following lemma, the proof is given in Appendix \ref{sec: appendix}.
\begin{lemma}\label{lemma: extra terms}
    Let \corrections{$u \in H^{4}(\Omega) \cap H^{\polOrder+1}(\Omega)$} be the solution to \eqref{eqn: cts weak form} and let $\ellipticProj u$ be the elliptic projection defined in \eqref{eqn: elliptic proj}. For any $\eta_h \in \vemSpace$, it holds
    \begin{align*}
        | (\phi^{\prime \prime}(u) u_t \, \gradProj \ellipticProj u,\gradProj \eta_h)_{0,h} 
        - 
        (\phi^{\prime \prime}(u) u_t \, \nabla u,\nabla \eta_h)_{0,h} 
        |
        \lesssim
        h^{\polOrder} \| \eta_h \|_{2,h}.
    \end{align*}
\end{lemma}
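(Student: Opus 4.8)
The plan is to write the difference as a sum of terms, each of which either involves the elliptic-projection error $u - \ellipticProj u$ (which we control by Lemma~\ref{lemma: elliptic proj bounds}, in particular \eqref{eqn: elliptic proj in 1norm}) or involves only the $L^2$-projection error of derivatives of the smooth solution $u$ (which we control by Theorem~\ref{thm: interpolation estimates}). First I would insert and subtract the exact gradient $\nabla u$ and the exact projection's gradient to split
\begin{align*}
    (\phi''(u) u_t \, \gradProj \ellipticProj u, \gradProj \eta_h)_{0,h}
    - (\phi''(u) u_t \, \nabla u, \nabla \eta_h)_{0,h}
    = \mathrm{(I)} + \mathrm{(II)} + \mathrm{(III)},
\end{align*}
where $\mathrm{(I)} = (\phi''(u) u_t \, \gradProj(\ellipticProj u - u), \gradProj \eta_h)_{0,h}$, then $\mathrm{(II)} = (\phi''(u) u_t \, (\gradProj u - \nabla u), \gradProj \eta_h)_{0,h}$, and $\mathrm{(III)} = (\phi''(u) u_t \, \nabla u, (\gradProj \eta_h - \nabla \eta_h))_{0,h}$. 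Throughout I use that $\phi'' = \psi'''$ is a fixed polynomial, so $\|\phi''(u) u_t\|_{L^\infty}$ is bounded by the regularity assumptions on $u$ (recall $u \in L^\infty(0,T;V)$ with enough extra regularity on $u_t$, as noted after \eqref{eqn: cts weak form}), and $\|\gradProj w\|_{0,h} \lesssim |w|_{1,h}$ since $\gradProj$ restricted to $\vemSpace$ is $\cP^K_{\polOrder-1}$ applied to $\nabla w$ by Lemma~\ref{eqn: projections and l2 projection result}, which is $L^2$-stable.

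For term $\mathrm{(I)}$, Cauchy--Schwarz and $L^2$-stability of $\gradProj$ give $|\mathrm{(I)}| \lesssim \|\phi''(u)u_t\|_{L^\infty} |\ellipticProj u - u|_{1,h} \, |\eta_h|_{1,h} \lesssim h^{\polOrder} \|\eta_h\|_{2,h}$ directly from \eqref{eqn: elliptic proj in 1norm}. For term $\mathrm{(II)}$, I would use that on each $K$, $\gradProj u = \cP^K_{\polOrder-1}(\nabla u)$ is the componentwise $L^2$-projection of $\nabla u$; hence by Theorem~\ref{thm: interpolation estimates} (with $m = \polOrder$, $s = 0$, applied to $\nabla u \in H^{\polOrder-1}(K)$, i.e. $u \in H^{\polOrder}$), $\|\gradProj u - \nabla u\|_{0,K} \lesssim h_K^{\polOrder - 1}|u|_{\polOrder,K}$. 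That alone is only $h^{\polOrder-1}$, so the point is to also exploit orthogonality of the $L^2$-projection: subtract from $\gradProj \eta_h$ any fixed polynomial (e.g. a constant, or better $\cP^K_{\polOrder-2}(\nabla \eta_h)$) before applying Cauchy--Schwarz. More precisely, I would write $\mathrm{(II)} = \sum_K (\phi''(u)u_t(\gradProj u - \nabla u), \gradProj \eta_h - \pi)_K$ where $\pi$ is chosen so that $\gradProj u - \nabla u \perp \pi$; combining the $h_K^{\polOrder-1}$ bound on $\gradProj u - \nabla u$ with the $O(h_K)$ approximation of $\gradProj \eta_h$ by a lower-degree polynomial applied to the second derivatives of $\eta_h$ gains the extra power, yielding $|\mathrm{(II)}| \lesssim h^{\polOrder}|u|_{\polOrder,h}\|\eta_h\|_{2,h}$. (If the cleanest route is instead to move a derivative onto $\phi''(u)u_t$ via integration by parts edge-by-edge, I would use the standard nonconforming trick with the jump conditions built into $\HTwoNCSpace$ from Definition~\ref{defn: H2 nonconforming space}; but the orthogonality argument above is more direct.) Term $\mathrm{(III)}$ is symmetric: $\gradProj \eta_h - \nabla \eta_h = \cP^K_{\polOrder-1}(\nabla \eta_h) - \nabla \eta_h \perp \prob_{\polOrder-1}(K)$, so I subtract from $\phi''(u)u_t \nabla u$ its $L^2$-projection onto $[\prob_{\polOrder-1}(K)]^2$ and estimate $\|\phi''(u)u_t\nabla u - \cP^K_{\polOrder-1}(\phi''(u)u_t\nabla u)\|_{0,K} \lesssim h_K \|\phi''(u)u_t\nabla u\|_{1,K}$ (using $\phi''(u)u_t\nabla u \in H^1$, again from the regularity of $u, u_t$), times $\|\gradProj\eta_h - \nabla\eta_h\|_{0,K} \lesssim h_K |\eta_h|_{2,K}$ by Theorem~\ref{thm: interpolation estimates} — wait, that gives $h^2$, which is more than enough; one actually only needs $\|\gradProj\eta_h - \nabla\eta_h\|_{0,K} \lesssim h_K|\eta_h|_{2,K}$ paired with an $O(1)$ bound, but keeping both factors at their natural order is harmless. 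Summing over $K$ and using $\sum_K |\eta_h|_{2,K}^2 = |\eta_h|_{2,h}^2 \le \|\eta_h\|_{2,h}^2$ finishes the bound.

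The main obstacle is term $\mathrm{(II)}$: getting the full order $h^{\polOrder}$ rather than the naive $h^{\polOrder-1}$ requires carefully pairing the $L^2$-orthogonality of $\gradProj$ against $\prob_{\polOrder-1}$ with an approximation estimate for $\gradProj \eta_h$ by a polynomial of one degree lower, since $\eta_h$ itself is only an $H^2$ function with no extra regularity. I would want to double-check that $\gradProj \eta_h$ (which equals $\cP^K_{\polOrder-1}(\nabla \eta_h)$ on $\vemSpace$) can indeed be approximated to order $h_K$ in $L^2$ by an element of $[\prob_{\polOrder-2}(K)]^2$ with constant depending on $|\eta_h|_{2,K}$ — this follows since $\|\cP^K_{\polOrder-1}(\nabla\eta_h) - \cP^K_{\polOrder-2}(\nabla\eta_h)\|_{0,K} \le \|\nabla\eta_h - \cP^K_{\polOrder-2}(\nabla\eta_h)\|_{0,K} \lesssim h_K|\nabla\eta_h|_{1,K} = h_K|\eta_h|_{2,K}$ by Theorem~\ref{thm: interpolation estimates}. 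With that in hand the splitting closes and the lemma follows.
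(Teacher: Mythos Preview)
Your three-term splitting $\mathrm{(I)}+\mathrm{(II)}+\mathrm{(III)}$ is equivalent in spirit to the paper's decomposition (the paper first moves $\cP^K_{\polOrder-1}$ from $\nabla\eta_h$ to the other factor by self-adjointness, then splits analogously), and term $\mathrm{(I)}$ is handled correctly via \eqref{eqn: elliptic proj in 1norm}. However, there are genuine gaps in both $\mathrm{(II)}$ and $\mathrm{(III)}$, and they share a common cause: you are not using the full regularity $u\in H^{\polOrder+1}$.

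For $\mathrm{(II)}$, your orthogonality trick breaks. You need $\sum_K(\phi''(u)u_t(\gradProj u-\nabla u),\pi)_K=0$, but $\gradProj u-\nabla u$ is $L^2$-orthogonal to $[\prob_{\polOrder-1}(K)]^2$, \emph{not} to $\phi''(u)u_t\cdot[\prob_{\polOrder-1}(K)]^2$; the weight $\phi''(u)u_t=6u\,u_t$ is not a polynomial, so subtracting a polynomial $\pi$ from $\gradProj\eta_h$ does not kill the term. The fix, which is exactly what the paper does for the analogous piece, is simply to exploit $u\in H^{\polOrder+1}$: then $\|\cP^K_{\polOrder-1}(\nabla u)-\nabla u\|_{0,K}\lesssim h_K^{\polOrder}|u|_{\polOrder+1,K}$ directly, and Cauchy--Schwarz gives $|\mathrm{(II)}|\lesssim h^{\polOrder}|\eta_h|_{1,h}$ with no orthogonality needed. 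Your remark ``that alone is only $h^{\polOrder-1}$'' comes from applying Theorem~\ref{thm: interpolation estimates} with one order less regularity than the hypothesis actually provides.

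For $\mathrm{(III)}$ the problem is the claim that ``$h^2$ is more than enough''. The paper treats arbitrary $\polOrder\ge 2$, and an $O(h^2)$ bound is insufficient for $\polOrder\ge 3$. To close this term you must take the projection error of the weighted factor to full order, i.e.\ $\|\phi''(u)u_t\nabla u-\cP^K_{\polOrder-1}(\phi''(u)u_t\nabla u)\|_{0,K}\lesssim h_K^{\polOrder-1}$ using the smoothness of $u$ and $u_t$, and then pair it with $\|\cP^K_{\polOrder-1}(\nabla\eta_h)-\nabla\eta_h\|_{0,K}\lesssim h_K|\eta_h|_{2,K}$ to obtain $h_K^{\polOrder}|\eta_h|_{2,K}$. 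This is precisely how the paper handles its middle term (with $\cP^K_{\polOrder-1}\nabla u$ in place of $\nabla u$ inside the weighted factor, and $\cP^K_0\nabla\eta_h$ as the subtracted polynomial).
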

We now give the proof of Lemma \ref{lemma: elliptic proj bounds}.
\begin{proof}[Proof of Lemma \ref{lemma: elliptic proj bounds}]   
Define $\rho := u-\ellipticProj u$. Then, it follows that 
\begin{align*}
    \|\rho \|_{1,h}^2 
    =
    \left[ L_h(\rho)
    - b(z,\rho) 
    \right]
    +  b(z,\rho) 
    = I + II.
\end{align*}
In view of Lemma \ref{lemma: term 2} we only need to estimate term $I$. 
To this end, we use the same technique considered in \cite{zhao_morley-type_2018} for proving $H^1$ estimates for the fourth-order plate bending problem and introduce the interpolation $v_h^*$ into the lowest order $H^1$-conforming VEM space presented in e.g. \cite{beirao_da_veiga_basic_2013}.
The following estimate therefore holds. For any $w \in H^2(K)$, there exists $w^*$ in the lowest order $H^1$-conforming VEM space such that    
\begin{align}\label{eqn: interpolation lowest order estimates}
    \| w - w^* \|_{0,\element} + h_K | w -w^* |_{1,K} \lesssim h_K^2 |w|_{2,K}.
\end{align}
Notice that it also follows from \cite[(5.6)]{elliott1989nonconforming} using the density of $V$ in $H^1$ that for any $v \in H^1(\Omega)$ 
\begin{align}\label{eqn: dual problem with H1 vem function}
    \eps^2 (-\nabla \Delta z,\nabla v) + r(u;z,v) + \alpha (z,v) = L_h(v).
\end{align}
Therefore, using \eqref{eqn: dual problem with H1 vem function} with $v=\rho^*$ we see that 
\begin{align*}
    I = \, &
    L_h(\rho) - b(z,\rho) + \eps^2 (-\nabla \Delta z,\nabla \rho^*) + r(u;z,\rho^*) + \alpha (z,\rho^*) - L_h(\rho^*)
    \\
    = \, &
    L_h(\rho-\rho^*) - \eps^2 (D^2 z ,D^2 \rho) + \eps^2 (-\nabla \Delta z,\nabla \rho^*) 
    + r(u;z,\rho^*-\rho) + \alpha(z,\rho^*-\rho).
\end{align*}
We also observe that the following holds using integration by parts
\begin{align*}
    \eps^2 (D^2 z,D^2 \rho) =
    - \eps^2 (\nabla \Delta z,\nabla \rho)
    +
    \eps^2 \sum_{K \in \mesh} \int_{\partial K} \big( (\Delta z - \partial_{ss} z ) \partial_n \rho  + \partial_{ns} z \partial_s \rho \big) \, \ds
\end{align*}
and so, after combining this with the expression for $I$, \corrections{we get}
\begin{align*}
    I 
    = & \, 
    L_h(\rho-\rho^*) + \eps^2 ( \nabla \Delta z , \nabla (\rho - \rho^*)) + r(u;z,\rho^*-\rho) + \alpha(z,\rho^*-\rho) 
    \\
    &+ 
    \eps^2 \sum_{K \in \mesh} \int_{\partial K} \big( (\Delta z - \partial_{ss} z ) \partial_n \rho  + \partial_{ns} z \partial_s \rho \big) \, \ds
    =: I_1 + I_2 + I_3 + I_4 + I_5.
\end{align*}
Using Cauchy-Schwarz and \eqref{eqn: approximation properties of interpolation operator} it holds that
\begin{align*}
    I_1 \leq | L_h(\rho-\rho^*) | &= \left| \sum_{K \in \mesh} \int_K \nabla \rho \cdot \nabla (\rho -\rho^*) + \rho (\rho - \rho^*) \, \dx 
     \right|
    \\
    &\leq \sum_{\element \in \mesh} \| \rho \|_{1,\element} \| \rho -\rho^* \|_{1,\element}
    \\
    &\lesssim \| u - \ellipticProj u \|_{1,h} h | u - \ellipticProj u |_{2,h}
    \lesssim h^{\polOrder} \| u - \ellipticProj u \|_{1,h}.
\end{align*}
In the last step we have applied \eqref{eqn: elliptic proj in 2norm}. Similarly for the next three terms, we can apply the lowest order conforming interpolation estimates from \eqref{eqn: interpolation lowest order estimates}, the properties of the elliptic projection \eqref{eqn: elliptic proj in 2norm} and the regularity results \eqref{eqn: elliptic regularity}, \corrections{yielding}
\begin{align*}
    I_2 \leq \left| \eps^2 (\nabla \Delta z,\nabla (\rho - \rho^*)) \right| \lesssim 
    \eps^2 \| z\|_{3,\Omega} h |u-\ellipticProj u|_{2,h} 
    &\lesssim 
    h^{\polOrder} \| u-\ellipticProj u\|_{1,h},
    \\
    I_3 \leq | r (u;z,\rho^* - \rho) |
    \lesssim
    h \| z\|_{3,\Omega} |u-\ellipticProj u|_{2,h}
    &\lesssim
    h^{\polOrder} \| u - \ellipticProj u \|_{1,h},
    \\ 
    I_4 \leq | \alpha (z,\rho^* - \rho ) |
    \lesssim
    h \| z \|_{3,\Omega} |u-\ellipticProj u|_{2,h}
    &\lesssim
    h^{\polOrder} \| u - \ellipticProj u \|_{1,h}.
\end{align*}
Finally, we notice that $I_5$ can be estimated using the same method as in Lemma \ref{lemma: nonconformity error}, the proof of which can be found in \cite[Theorem 5.5]{10.1093/imanum/drab003}.
Therefore
\begin{align*}
    I \lesssim h^{\polOrder}\| u - \ellipticProj u \|_{1,h}
\end{align*}
and \eqref{eqn: elliptic proj in 1norm} holds.

It remains to show \eqref{eqn: elliptic proj time in 2 norm}-\eqref{eqn: elliptic proj time in 1 norm}. For this, we notice that for any $w_h \in \vemSpace$
\begin{align}\label{eqn: time split for bh}
    b_h ((\ellipticProj u)_t,w_h) = \ & b(u_t,w_h) 
    - \nonconformity(u_t,w_h)
    \notag
    \\
    &+
    (\phi^{\prime \prime} (u) u_t \nabla u, \nabla w_h ) - 
    (\phi^{\prime \prime} (u) u_t \gradProj \ellipticProj u, \gradProj w_h).
\end{align}
\jsccorrections{
    To see that \eqref{eqn: time split for bh} holds, we can use the definition of the form $b_h$ which is given in \eqref{eqn: bilinear form b_h}, to write
    \begin{align*}
        b_h ((\ellipticProj u)_t ,w_h)  
        =& \ 
        \eps^2 a_h ( (\ellipticProj u)_t,w_h) + r_h(u; (\ellipticProj u)_t,w_h) + \alpha ( (\ellipticProj u)_t,w_h) 
        \\
        &+ (\phi^{\prime \prime}(u) u_t, \gradProj \ellipticProj u, \gradProj w_h) - (\phi^{\prime \prime}(u) u_t, \gradProj \ellipticProj u, \gradProj w_h)
        \\
        =& \
        \frac{d}{dt} ( b_h( \ellipticProj u,w_h )) - (\phi^{\prime \prime}(u) u_t, \gradProj \ellipticProj u, \gradProj w_h),
    \end{align*}
    where we have used that 
    $$\frac{d}{dt} r_h(u;\ellipticProj u, w_h) = r_h(u;(\ellipticProj u)_t ,w_h) + (\phi^{\prime \prime}(u)u_t, \gradProj \ellipticProj u,\gradProj w_h).$$
    Furthermore, due to \eqref{eqn: elliptic proj}, and the nonlinearity in $r(u;u,w_h)$, it follows that
    \begin{align*}
        b_h ((\ellipticProj u)_t ,w_h)  
        =& \ 
        \frac{d}{dt} ((\eps^2 \Delta^2 u - \nabla \cdot( \phi^{\prime}(u) \nabla u) + \alpha u ,w_h)) 
        - (\phi^{\prime \prime}(u) u_t, \gradProj \ellipticProj u, \gradProj w_h)
        \\
        =& \
        \frac{d}{dt} (b(u,w_h) - \nonconformity(u,w_h))
        - (\phi^{\prime \prime}(u) u_t, \gradProj \ellipticProj u, \gradProj w_h)
        \\
        =& \ 
        b(u_t,w_h) + (\phi^{\prime \prime}(u) u_t \nabla u,\nabla w_h) - \nonconformity(u_t,w_h)
        \\
        &- (\phi^{\prime \prime}(u) u_t, \gradProj \ellipticProj u, \gradProj w_h),
    \end{align*}
    as required.
}
\jsccorrections{Recall that we can choose the constant $\alpha$ in the definition of $b_h$ in \eqref{eqn: bilinear form b_h} so that $b_h$ is coercive. Therefore,} using the coercivity of the bilinear form $b_h$, alongside \eqref{eqn: time split for bh} and the definition of $\ellipticProj u$, we have the following 
\begin{align*}
    \| \ellipticProj (u_t) - (\ellipticProj u)_t \|_{2,h}^2 \lesssim \ &
    b_h(\ellipticProj (u_t) - (\ellipticProj u)_t,\ellipticProj (u_t) - (\ellipticProj u)_t)
    \\
    = \ &
    b_h ( \ellipticProj (u_t) , \ellipticProj (u_t) - (\ellipticProj u)_t)
    -
    b_h ( (\ellipticProj u)_t, \ellipticProj (u_t) - (\ellipticProj u)_t,)
    \\
    = \ &
    (\phi^{\prime \prime} (u) u_t \gradProj \ellipticProj u, \gradProj \big( \ellipticProj (u_t) - (\ellipticProj u)_t \big) )
    \\
    &- 
    (\phi^{\prime \prime} (u) u_t \nabla u, \nabla \big( \ellipticProj (u_t) - (\ellipticProj u)_t \big) )
    \\
    \lesssim \ & 
    h^{\polOrder} \| \ellipticProj (u_t) - (\ellipticProj u)_t  \|_{2,h},
\end{align*}
where we have applied Lemma \ref{lemma: extra terms} with $\eta_h = \ellipticProj (u_t) - (\ellipticProj u)_t$ in the last step. 
It therefore follows that 
\begin{align*}
    \| u_t - (\ellipticProj u)_t \|_{2,h} \leq 
    \| u_t - \ellipticProj (u_t) \|_{2,h} + \| \ellipticProj (u_t) - (\ellipticProj u)_t \|_{2,h} 
    \lesssim
    h^{\polOrder-1}
\end{align*}
\jsccorrections{where we have bound the first term using \eqref{eqn: elliptic proj in 2norm}.}

In order to prove \eqref{eqn: elliptic proj time in 1 norm}, we proceed in the exact same way as the proof of \eqref{eqn: elliptic proj in 1norm}. We consider again a dual problem: find $\tilde z \in V$ such that 
\begin{align*}
    b(\tilde z,w) =
    ( u_t - (\ellipticProj u)_t , w )_{1,h} + ( u_t - (\ellipticProj u)_t ,w)_{0,h}
    \quad \forall \, w \in V.
\end{align*}
\jsccorrections{We can then follow in the exact steps of the proof of Lemma~\ref{lemma: term 2} and introduce the interpolation of $\tilde z$, $\globalInterpolation \tilde z$, in order to bound $b(\tilde z, u_t - (\ellipticProj u)_t)$. This, together with a regularity result for $\tilde z$ gives us the following estimate
\begin{align*}
    | b( \tilde z, u_t - (\ellipticProj u)_t) | \lesssim h^{\polOrder} \| u_t - (\ellipticProj u)_t \|_{1,h}
\end{align*}}
and the result now follows as before, \jsccorrections{following the steps of the proof of \eqref{eqn: elliptic proj in 1norm}}. 
\end{proof}

\ifthenelse{\boolean{thesis}}{
\begin{align*}
    r(u;u_t,\psi_h) = \frac{d}{dt} r(u;u,\psi_h) - (\phi^{\prime \prime} (u) u_t \nabla u, \nabla \psi_h )
    \\
    r_h (u;u_t,\psi_h) = \frac{d}{dt} r_h(u;u,\psi_h) - (\phi^{\prime \prime} (u) u_t \gradProj u, \gradProj \psi_h)
\end{align*}
\begin{align*}
    \frac{d}{dt} (b_h(\ellipticProj u,\psi_h)) = b_h ((\ellipticProj u)_t,\psi_h) + (\phi^{\prime \prime} (u) u_t \gradProj \ellipticProj u, \gradProj \psi_h) 
    \\
    \frac{d}{dt} ( b(u,\psi_h)) = b(u_t,\psi_h) +  (\phi^{\prime \prime} (u) u_t \nabla u, \nabla \psi_h )
\end{align*}
\begin{align*}
    b_h ((\ellipticProj u)_t,\psi_h) + (\phi^{\prime \prime} (u) u_t \gradProj \ellipticProj u, \gradProj \psi_h)  = \frac{d}{dt} (b_h(\ellipticProj u,\psi_h))
    \\
    =
    \frac{d}{dt} ( (\eps^2 \Delta^2 u  - \nabla \cdot(\phi^{\prime}(u) \nabla u) + \alpha u,\psi_h ) )
    \\
    =
    \frac{d}{dt} ( b(u,\psi_h) - \nonconformity(u,\psi_h))
    \\
    =
    b(u_t,\psi_h) + (\phi^{\prime \prime} (u) u_t \nabla u, \nabla \psi_h ) - \nonconformity(u_t,\psi_h)
\end{align*}
\begin{align*}
    b_h ((\ellipticProj u)_t,\psi_h) - b(u_t,\psi_h) 
    =
    (\phi^{\prime \prime} (u) u_t \nabla u, \nabla \psi_h ) - 
    (\phi^{\prime \prime} (u) u_t \gradProj \ellipticProj u, \gradProj \psi_h) 
    -
    \nonconformity(u_t,\psi_h)
\end{align*}
\red{Re-define the right hand side of $b$ with $u_t - (\ellipticProj u)_t$?, and apply with $\psi_h=z_h$}
\red{Go through for remaining term}

Triangle inequality
\begin{align*}
    \| u_t - (\ellipticProj u)_t \|_{2,h} \leq \| u_t - \ellipticProj (u_t) \|_{2,h} + \| \ellipticProj (u_t) - (\ellipticProj u)_t \|_{2,h}
\end{align*}

Note that 
\begin{align*}
    b_h ((\ellipticProj u)_t,\psi_h) 
    = \ &
    b(u_t,\psi_h) 
    -
    \nonconformity(u_t,\psi_h)
    \\
    &+
    (\phi^{\prime \prime} (u) u_t \nabla u, \nabla \psi_h ) - 
    (\phi^{\prime \prime} (u) u_t \gradProj \ellipticProj u, \gradProj \psi_h) 
\end{align*}

Using coercivity of the bilinear form $b_h$
\begin{align*}
    \| \ellipticProj (u_t) - (\ellipticProj u)_t \|_{2,h}^2 \lesssim \ &
    b_h(\ellipticProj (u_t) - (\ellipticProj u)_t,\ellipticProj (u_t) - (\ellipticProj u)_t)
    \\
    = \ &
    b_h ( \ellipticProj (u_t) , \ellipticProj (u_t) - (\ellipticProj u)_t)
    -
    b_h ( (\ellipticProj u)_t, \ellipticProj (u_t) - (\ellipticProj u)_t,)
    \\
    = \ &
    (\eps^2 \Delta^2 u_t - \nabla \cdot (\phi^{\prime} (u) \nabla u_t) + \alpha u_t, \ellipticProj (u_t) - (\ellipticProj u)_t )
    \\
    &-
    b(u_t,\ellipticProj (u_t) - (\ellipticProj u)_t) 
    +
    \nonconformity(u_t,\ellipticProj (u_t) - (\ellipticProj u)_t)
    \\
    &-
    (\phi^{\prime \prime} (u) u_t \nabla u, \nabla \ellipticProj (u_t) - (\ellipticProj u)_t ) 
    +
    (\phi^{\prime \prime} (u) u_t \gradProj \ellipticProj u, \gradProj \ellipticProj (u_t) - (\ellipticProj u)_t)
    \\
    = \ &
    (\phi^{\prime \prime} (u) u_t \gradProj \ellipticProj u, \gradProj \ellipticProj (u_t) - (\ellipticProj u)_t)
    - 
    (\phi^{\prime \prime} (u) u_t \nabla u, \nabla \ellipticProj (u_t) - (\ellipticProj u)_t ) 
\end{align*}

Therefore
\begin{align*}
    \| \ellipticProj u_t - (\ellipticProj u)_t \|_{2,h} \lesssim h^{\polOrder-1}
\end{align*}

Similarly, for \eqref{eqn: elliptic proj time in 1 norm}, we can consider the dual problem again \eqref{eqn: dual problem} with right hand side now given by 
\begin{align*}
    \tilde L_h (w) := ( u_t - (\ellipticProj u)_t , w )_{1,h} + ( u_t - (\ellipticProj u)_t ,w)_{0,h}
\end{align*}
and noting that 
}
{}

\ifthenelse{\boolean{thesis}}{
    \begin{lemma}[Boundedness of projection]
        It holds that 
        \begin{align*}
            \| \ellipticProj u \|_{1,\infty;h} + \| (\ellipticProj u)_t \|_{1,\infty;h} \leq C
        \end{align*}
    \end{lemma}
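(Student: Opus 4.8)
The plan is to estimate the broken $W^{1,\infty}$--seminorms in the statement by comparing $\ellipticProj u$ (and likewise $(\ellipticProj u)_t$) elementwise with the $L^2$--projection of $u$ (and of $u_t$), and then invoking a standard inverse inequality on $\localVEMSpace$: under Assumption~\ref{assumption: mesh regularity} one has $|v_h|_{1,\infty,K}\lesssim h_K^{-1}|v_h|_{1,K}$ for every $v_h\in\localVEMSpace$, which follows from the star-shapedness hypothesis together with the same scaling arguments underlying the stability estimate of Lemma~\ref{lemma: stability}, and which in particular applies to polynomials since $\prob_{\polOrder}(K)\subset\localVEMSpace$ by Lemma~\ref{eqn: projections and l2 projection result}. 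Since $\vemSpace$ is a fixed, time--independent space we have $(\ellipticProj u)_t\in\vemSpace$, so the second bound is obtained by running the argument below verbatim with $u$ replaced by $u_t$ and \eqref{eqn: elliptic proj in 1norm} replaced by \eqref{eqn: elliptic proj time in 1 norm}.

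First I would split, on each $K\in\mesh$, $\ellipticProj u=(\ellipticProj u-\ltwo_{\polOrder}u)+\ltwo_{\polOrder}u$, noting that both summands -- hence their difference -- lie in $\localVEMSpace$. For the first summand the inverse inequality, the triangle inequality, the local $L^2$--projection estimate of Theorem~\ref{thm: interpolation estimates} with $m=2$, and the global bound \eqref{eqn: elliptic proj in 1norm} give
\[
  |\ellipticProj u-\ltwo_{\polOrder}u|_{1,\infty,K}\lesssim h_K^{-1}\bigl(|u-\ellipticProj u|_{1,K}+|u-\ltwo_{\polOrder}u|_{1,K}\bigr)\lesssim h_K^{-1}h^{\polOrder}+|u|_{2,K}\lesssim h^{\polOrder-1}+|u|_{2,\Omega},
\]
which is bounded since $\polOrder\ge2$ (indeed the first term tends to $0$). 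For the polynomial summand, since $\ltwo_{\polOrder}$ reproduces constants we may write $\nabla\ltwo_{\polOrder}u=\nabla\ltwo_{\polOrder}(u-\bar u_K)$ with $\bar u_K$ the mean of $u$ over $K$; then the polynomial inverse inequality, the $L^2$--stability of $\ltwo_{\polOrder}$, a Poincar\'e inequality and $|u|_{1,K}\lesssim h_K\|\nabla u\|_{L^{\infty}(\Omega)}$ chain to give
\[
  |\ltwo_{\polOrder}u|_{1,\infty,K}\lesssim h_K^{-1}|\ltwo_{\polOrder}(u-\bar u_K)|_{1,K}\lesssim h_K^{-2}\|u-\bar u_K\|_{0,K}\lesssim h_K^{-1}|u|_{1,K}\lesssim\|\nabla u\|_{L^{\infty}(\Omega)}.
\]
Taking the maximum over $K$ and using the two--dimensional Sobolev embedding $H^{4}(\Omega)\hookrightarrow W^{1,\infty}(\Omega)$ then yields $\|\ellipticProj u\|_{1,\infty;h}\lesssim|u|_{2,\Omega}+\|\nabla u\|_{L^{\infty}(\Omega)}\lesssim\|u\|_{4,\Omega}$, with constant independent of $h$.

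The same computation with $u_t$ in place of $u$ -- using $(\ellipticProj u)_t-\ltwo_{\polOrder}u_t\in\localVEMSpace$, the bound \eqref{eqn: elliptic proj time in 1 norm} in the form $\|u_t-(\ellipticProj u)_t\|_{1,h}\lesssim h^{\polOrder}$, and the higher regularity of $u_t$ recorded after \eqref{eqn: cts weak form} (valid whenever $-\Delta^{2}u_0+\Delta\phi(u_0)\in V$) -- gives $\|(\ellipticProj u)_t\|_{1,\infty;h}\lesssim|u_t|_{2,\Omega}+\|\nabla u_t\|_{L^{\infty}(\Omega)}<\infty$; adding the two estimates proves the claim.

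The step I expect to be the main obstacle is the bookkeeping of powers of $h$: every factor $h_K^{-1}$ produced by an inverse estimate must be absorbed, either by an $h_K$ coming from a \emph{local} approximation estimate (this is what controls $\ltwo_{\polOrder}u$ and the term $|u-\ltwo_{\polOrder}u|_{1,K}$) or by the surplus power in $h^{\polOrder}=h\cdot h^{\polOrder-1}$ with $\polOrder\ge2$ -- and since only the \emph{global} bound \eqref{eqn: elliptic proj in 1norm} is available for $u-\ellipticProj u$, this last step quietly uses that $h\simeq h_K$ across the mesh, consistently with the convention adopted throughout the paper. A secondary technical point is that the inverse inequality is applied to functions of the implicitly defined space $\localVEMSpace$ rather than to polynomials; this is justified by the elementwise scaling arguments and the star-shapedness of Assumption~\ref{assumption: mesh regularity}, and requires only the observation that $\ellipticProj u-\ltwo_{\polOrder}u$ and $(\ellipticProj u)_t-\ltwo_{\polOrder}u_t$ genuinely lie in $\localVEMSpace$, which holds because $\prob_{\polOrder}(K)\subset\localVEMSpace$.
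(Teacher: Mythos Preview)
Your argument is correct and follows the same overall strategy as the paper: write $\ellipticProj u$ as an intermediate discrete object plus a correction lying in $\localVEMSpace$, apply an inverse inequality to the correction, and bound the intermediate directly. The paper, however, chooses the VEM interpolant $\globalInterpolation u$ as the intermediate (so the correction is $\globalInterpolation u-\ellipticProj u\in\vemSpace$) and then appeals to $W^{1,\infty}$ stability of $\globalInterpolation$ together with \eqref{eqn: approximation properties of interpolation operator} and \eqref{eqn: elliptic proj in 1norm}; see the remark surrounding \eqref{eqn: bdd elliptic projection} and the reference to \cite{elliott1989nonconforming}. Your choice of $\ltwo_{\polOrder}u$ instead trades that stability assumption for an explicit chain of polynomial inverse and Poincar\'e inequalities, which is slightly more self-contained since $W^{1,\infty}$ stability of the VEM interpolant is not entirely standard. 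Both routes rely on the same two nontrivial ingredients you already flagged: an inverse estimate on $\localVEMSpace$ (not just on polynomials), and the implicit quasi-uniformity $h\simeq h_K$ needed to absorb $h_K^{-1}$ against the global bound $h^{\polOrder}$ from \eqref{eqn: elliptic proj in 1norm}; the paper's sketch uses both without comment, so your explicit acknowledgment of them is an improvement in clarity rather than a gap.
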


    \begin{proof}
        \red{Go over (3.2c) in Charlies paper again}. Inverse inequality yields
        \begin{align*}
            \| \globalInterpolation u - \ellipticProj u \|_{1,\infty;h} 
            &\leq C h^{-1} \| \globalInterpolation u - \ellipticProj u \|_{1,2;h } 
            \\
            &\leq Ch^{-1} (\| \globalInterpolation u - u \|_{1,2;h }  + \| u - \ellipticProj u \|_{1,2;h }  ) 
            \\
            &\leq Ch^{-1} ( \| \globalInterpolation u - u \|_{1,2;h } + h^{\polOrder}  )
            \\
            &\leq Ch^{-1} (h^{\polOrder} |u|_{\polOrder+1} + h^{\polOrder}  )
            \\
            &\leq Ch^{\polOrder-1}
        \end{align*}
        Therefore 
        \begin{align*}
            \| \ellipticProj u \|_{1,\infty;h} 
            &\leq 
            \| \globalInterpolation u - \ellipticProj u \|_{1,\infty;h} + \| \globalInterpolation u \|_{1,\infty;h}
            \\
            &\leq C h^{-1} (h^{\polOrder} + \|\globalInterpolation u \|_{1,2;h} )
        \end{align*}
    \end{proof}
}
{}

\begin{remark}
    Notice that it follows from \eqref{eqn: approximation properties of interpolation operator} and \eqref{eqn: elliptic proj in 1norm} as well as stability properties of the interpolation operator that the elliptic projection is bounded \cite{elliott1989nonconforming}. In particular,
    \begin{align}\label{eqn: bdd elliptic projection}
        \| \ellipticProj u \|_{1,\infty;h} \leq C.
    \end{align}
    We use this property in the proof of some technical lemmas described in the next subsection.
\end{remark}

\subsection{Technical results}
This intermediate subsection is dedicated to two additional preliminary results that are required before we can prove the $L^2$ error estimate presented in Theorem \ref{thm: L2 convergence}. 
The first is an estimate for the semilinear term $r_h$. 
We use the following standard error decomposition arising in the study of time-dependent problems\corrections{:}
\begin{align}\label{eqn: error decomposition}
    u - u_h = (u - \ellipticProj u ) + (\ellipticProj u- u_h) =: \rho + \theta,
\end{align}
for \corrections{$u$} the solution to \eqref{eqn: cts weak form} and \corrections{$\ellipticProj u$} the projection defined in \eqref{eqn: elliptic proj}. 

\begin{lemma}\label{lemma: nonlinear bounds}
    \corrections{If $u \in H^{4}(\Omega) \cap H^{\polOrder+1}(\Omega)$} it follows that 
    \ifthenelse{\boolean{thesis}}{
    \begin{align}
        \begin{split}
            \big| \rh ( \valueProj u_h;u_h,\theta) - &\rh ( u;\ellipticProj u, \theta) \big|
            \\
            &= \sum_{K \in \cT_h} \int_K \phi^{\prime} (\valueProj u_h ) \, \gradProj u_h \cdot \gradProj \theta \, \dx 
            - 
            \sum_{K \in \cT_h} \int_K \phi^{\prime} (u) \, \gradProj (\ellipticProj u) \cdot \gradProj \theta \,
            \dx
            \\
            &\leq 
            C
            \big(
                \| \theta \|_{0,h} + \| \rho \|_{0,\Omega} + h^{\polOrder} 
                \big)
                | \theta |_{1,h}.
        \end{split}
    \end{align}
    }
    {
        \begin{align}
            \big| \rh ( \Pi^h_0 u_h;u_h,\theta) - &\rh ( u;\ellipticProj u, \theta) \big|
            \lesssim 
            \left( |\theta|_{1,h} + \| \theta \|_{0,h} + \| \rho \|_{0,h} + h^{\polOrder} \right)
                | \theta |_{1,h}.
        \end{align} 
    }
\end{lemma}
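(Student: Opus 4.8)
The plan is to split off the change in the (bilinear) second argument of $\rh$ from the change in the coefficient. Set $\theta=\ellipticProj u-u_h$, so that $u_h-\ellipticProj u=-\theta$, and recall that $\rh(z_h;\cdot,\cdot)$ is bilinear in its last two arguments. I would write
\begin{align*}
    \rh(\Pi^h_0 u_h;u_h,\theta)-\rh(u;\ellipticProj u,\theta)
    =\underbrace{-\rh(\Pi^h_0 u_h;\theta,\theta)}_{=:\,T_1}
    +\underbrace{\big[\rh(\Pi^h_0 u_h;\ellipticProj u,\theta)-\rh(u;\ellipticProj u,\theta)\big]}_{=:\,T_2},
\end{align*}
where $T_1$ keeps the coefficient frozen at $\Pi^h_0 u_h$ and $T_2$ keeps the second argument frozen at $\ellipticProj u$, so that only the coefficient changes.

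For $T_1$, I would bound the integral part of $\rh(\Pi^h_0 u_h;\theta,\theta)$ by $\sum_{K}\|\phi'(\valueProj u_h)\|_{L^\infty(K)}\|\gradProj\theta\|_{0,K}^2$: since $\phi'(x)=3x^2-1$ and, by Lemma~\ref{eqn: projections and l2 projection result}, $\valueProj u_h=\ltwo_\polOrder u_h$, the $L^\infty$-stability of $\ltwo_\polOrder$ on shape-regular elements (Assumption~\ref{assumption: mesh regularity}) together with the uniform $L^\infty$ bound on $u_h$ from Assumption~\ref{assumption: boundedness of discrete solution} makes this coefficient uniformly bounded; combined with $\|\gradProj\theta\|_{0,K}=\|\ltwo_{\polOrder-1}\nabla\theta\|_{0,K}\le|\theta|_{1,K}$ this term is $\lesssim|\theta|_{1,h}^2$. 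For the stabilization part $\sum_K\beta_K\stabilisation(\theta-\valueProj\theta,\theta-\valueProj\theta)$, the key observation is that $\valueProj$ reproduces polynomials, so $\theta-\valueProj\theta=(\theta-q)-\valueProj(\theta-q)$ for any $q\in\prob_\polOrder(K)$; applying the $\ahlowerLocal$-stability estimate of Lemma~\ref{lemma: stability} to $\theta-q$ gives $\stabilisation(\theta-\valueProj\theta,\theta-\valueProj\theta)\lesssim h_K^{-2}\|\theta-q\|_{0,K}^2$, and choosing $q=\ltwo_0\theta$ and invoking the Poincaré-type bound of Theorem~\ref{thm: interpolation estimates} yields $\lesssim|\theta|_{1,K}^2$. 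Hence $|T_1|\lesssim|\theta|_{1,h}^2$.

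For $T_2$, the stabilization contributions do not involve the first argument and cancel, leaving
\begin{align*}
    T_2=\sum_{K\in\mesh}\int_K\big(\phi'(\valueProj u_h)-\phi'(u)\big)\,\gradProj(\ellipticProj u)\cdot\gradProj\theta\,\dx.
\end{align*}
Using $|\phi'(a)-\phi'(b)|=3|a+b|\,|a-b|$ together with the uniform $L^\infty$ bounds on $\valueProj u_h$ and on $u$ (the latter from $u\in H^4(\Omega)\hookrightarrow L^\infty(\Omega)$) gives $\|\phi'(\valueProj u_h)-\phi'(u)\|_{0,K}\lesssim\|\valueProj u_h-u\|_{0,K}$, while $\|\gradProj(\ellipticProj u)\|_{L^\infty(K)}\lesssim\|\ellipticProj u\|_{1,\infty;h}$ is bounded by \eqref{eqn: bdd elliptic projection} (again via $L^\infty$-stability of $\ltwo_{\polOrder-1}$). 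Thus $|T_2|\lesssim\big(\sum_K\|\valueProj u_h-u\|_{0,K}^2\big)^{1/2}|\theta|_{1,h}$, and the decomposition $\valueProj u_h-u=-\ltwo_\polOrder\theta-\ltwo_\polOrder\rho+(\ltwo_\polOrder u-u)$ (using $\valueProj v_h=\ltwo_\polOrder v_h$ on $\localVEMSpace$) together with the $L^2$-stability of $\ltwo_\polOrder$ and Theorem~\ref{thm: interpolation estimates} with $m=\polOrder+1$ gives $\|\valueProj u_h-u\|_{0,K}\lesssim\|\theta\|_{0,K}+\|\rho\|_{0,K}+h_K^{\polOrder+1}|u|_{\polOrder+1,K}$. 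Summing over $K$ and combining with the bound on $T_1$ yields the claim.

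I expect the only delicate step to be the stabilization term in $T_1$: bounding it via the $\ahhessianLocal$-stability (which only gives $\stabilisation(\theta-\valueProj\theta,\theta-\valueProj\theta)\lesssim h_K^{2}|\theta|_{2,K}^2$) would lose a factor of $h$, so one must instead exploit the $\ahlowerLocal$-scaling of the stabilization together with the polynomial reproduction of $\valueProj$ to recover the correct $|\theta|_{1,h}$ behaviour; everything else is Cauchy--Schwarz, the uniform $W^{1,\infty}$ bounds on $u_h$ and $\ellipticProj u$, and standard projection estimates.
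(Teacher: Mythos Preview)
Your proof is correct and follows essentially the same approach as the paper: the split $T_1+T_2$ is exactly the paper's $J_1+J_2$ decomposition (freeze the coefficient, then freeze the second argument), and the $L^\infty$ bounds on $\valueProj u_h$ and $\ellipticProj u$ together with the triangle-inequality decomposition of $\valueProj u_h-u$ are handled identically. The one point where you are actually more careful than the paper is the stabilization contribution of $\rh$: the paper silently drops it in the first displayed equality, whereas you correctly observe it survives in $T_1$ as $-\beta_K\stabilisation(\theta-\valueProj\theta,\theta-\valueProj\theta)$ and bound it via the $\ahlowerLocal$-scaling and polynomial reproduction of $\valueProj$.
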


\begin{proof}
    Using the definition of $\rh$ we have that 
    \begin{align*}
        \big| \rh (\Pi^h_0 u_h;u_h,\theta) &- \rh ( u;\ellipticProj u, \theta) \big|
        \\
        &=
        \left|
        \sum_{K \in \cT_h} \int_K \big( 
            \phi^{\prime} (\valueProj u_h) \gradProj u_h - \phi^{\prime} (u) \gradProj ( \ellipticProj u) 
        \big) 
        \cdot \gradProj \theta \, \dx
        \right|
        \\
        &\leq 
        \sum_{K \in \mesh}
        \left\| 
            \phi^{\prime} (\valueProj u_h) \gradProj u_h - \phi^{\prime} (u) \gradProj ( \ellipticProj u)   
        \right\|_{0,K} 
        | \theta |_{1,K}
    \end{align*}
    where we have used Lemma \ref{eqn: projections and l2 projection result} since $\theta \in \vemSpace$. Therefore, using the triangle inequality, we see that 
    \begin{align*}
        \left\| 
            \phi^{\prime} (\valueProj u_h) \gradProj u_h - \phi^{\prime} (u) \gradProj ( \ellipticProj u) 
        \right\|_{0,K} 
        \leq& \
        \left\| 
            \phi^{\prime} (\valueProj u_h) \left( \gradProj u_h - \gradProj (\ellipticProj u) \right) 
        \right\|_{0,K} 
        \\
        &+ 
        \| \left( \phi^{\prime}(\valueProj u_h) - \phi^{\prime} (u) \right) \gradProj (\ellipticProj u ) \|_{0,K}
        \\
        &=: J_1+J_2.
    \end{align*}

    To estimate the first term $J_1$ it follows that 
    \begin{align*}
        J_1
        =
        \| 
            \phi^{\prime} (\valueProj u_h) \big(   \gradProj u_h - \gradProj (\ellipticProj u) \big) 
        \|_{0,K}  
        = 
        \| 
            \phi^{\prime} (\valueProj u_h) \, \gradProj \theta 
        \|_{0,\Omega}  
        \lesssim
        |\theta |_{1,K} 
    \end{align*}
    where we have used Assumption \ref{assumption: boundedness of discrete solution}, alongside $L^{\infty}$ stability properties of the $L^2$ projection (for more details see e.g. the theory in \cite{crouzeix1987stability}).

    To estimate the second term $J_2$, we have that
    \begin{align*}
        J_2 = \left\| \left( \phi^{\prime}(\valueProj u_h) - \phi^{\prime} (u) \right) \gradProj (\ellipticProj u ) \right\|_{0,K} 
        &\lesssim
        \| \ellipticProj u \|_{1,\infty;h} 
        \| 
            \valueProj u_h - u
        \|_{0,K} 
    \end{align*}
    where we have used the bounded property of $\ellipticProj u$ in \eqref{eqn: bdd elliptic projection} \jsccorrections{as well as Assumption~\ref{assumption: boundedness of discrete solution} and the definition of $\phi^{\prime}$}. 

    Notice that using the triangle inequality, the definitions of $\rho, \theta$, and properties of the $L^2$ projection, we have
    \begin{align*}
        \| \valueProj u_h - u \|_{0,K} 
        &\leq 
        \| \valueProj u_h - \valueProj \ellipticProj u \|_{0,K}
        + 
        \| \ltwo_{\polOrder} \ellipticProj u - \ltwo_{\polOrder} u \|_{0,K}
        + 
        \| \ltwo_{\polOrder} u - u \|_{0,K}
        \\
        &=
        \| \valueProj \theta  \|_{0,K} 
        +
        \| \ltwo_{\polOrder}  \rho \|_{0,K} 
        +
        \| (I - \ltwo_{\polOrder}) u \|_{0,K}
        \\
        &\lesssim
        \| \theta \|_{0,K} + \| \rho \|_{0,K} + h^{\polOrder}.
    \end{align*}

    Hence, by combining the estimates for $J_1$ and $J_2$: 
    \begin{align*}
        \left| \rh( \Pi^h_0 u_h;u_h,\theta) - \rh(u,\ellipticProj u,\theta) \right|
        \lesssim
        \left( 
            | \theta |_{1,h} 
            + 
            \| \theta \|_{0,h} + \| \rho \|_{0,h} + h^{\polOrder} 
        \right)
        |\theta |_{1,h},
    \end{align*}
    as required. 
\end{proof}

\corrections{W}e require one additional lemma. 
The proof is given in Appendix \ref{sec: appendix}.
\begin{lemma}\label{lemma: boundary term bound}
    For any $w_h, z_h \in \vemSpace$, it holds that   
    \begin{align}\label{eqn: bdry lemma}
        \left| \sum_{K \in \cT_h} \int_{\partial K} (\partial_n z_h) w_h \, \ds \right| 
        \lesssim 
        h \left( |w_h|_{1,h} |z_h|_{2,h} + |w_h|_{2,h} |z_h|_{2,h} \right) .
    \end{align} 
\end{lemma}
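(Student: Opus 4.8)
The plan is to exploit the nonconforming structure of $\vemSpace$. Since $w_h, z_h \in \vemSpace \subset \HTwoNCSpace$, the quantity $\sum_K \int_{\partial K} (\partial_n z_h) w_h \, \ds$ does \emph{not} vanish, but the jumps of $w_h$ and of $\partial_n z_h$ across internal edges are orthogonal to polynomials of degrees $\polOrder-3$ and $\polOrder-2$ respectively, and the traces vanish at vertices. First I would rewrite the sum edge-by-edge, splitting $\mathcal{E}_h = \mathcal{E}_h^{\text{int}} \cup \mathcal{E}_h^{\text{bdry}}$. On each internal edge $e \subset \partial K^+ \cap \partial K^-$ the contribution is $\int_e \big( (\partial_n z_h)^+ w_h^+ - (\partial_n z_h)^- w_h^- \big) \, \ds$, which I would rearrange into jump/average form using the identity $a^+b^+ - a^-b^- = [a]\{b\} + \{a\}[b]$ for the appropriate normal orientation. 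Boundary edges carry only the term $\int_e (\partial_n z_h) w_h \, \ds$, which is handled the same way with the convention that the jump is the trace.

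Next, the key idea: on each edge I can subtract off polynomial projections for free. Because $\int_e [w_h] p \, \ds = 0$ for $p \in \prob_{\polOrder-3}(e)$ and $\int_e [\partial_n z_h] p \, \ds = 0$ for $p \in \prob_{\polOrder-2}(e)$, I can replace $\{ \partial_n z_h \}$ by $\{ \partial_n z_h \} - \cP^e_{\polOrder-3}\{\partial_n z_h\}$ in the term multiplying $[w_h]$, and replace $\{ w_h \}$ by $\{w_h\} - \cP^e_{\polOrder-2}\{w_h\}$ in the term multiplying $[\partial_n z_h]$ (and similarly on boundary edges). After these subtractions, Cauchy--Schwarz on each edge, then the edgewise approximation estimates of Theorem~\ref{thm: interpolation estimates} together with a trace inequality, give a gain of a power of $h_e$ on each factor. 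Concretely, $\| [w_h] \|_{0,e} = \| w_h - \text{(something smooth)} \|$-type terms scale like $h_e^{1/2} |w_h|_{1,K^\pm}$ (after subtracting the vertex-interpolant, using that $w_h$ is continuous at vertices and a Poincaré-type / trace bound), and $\| \{\partial_n z_h\} - \cP^e_{\polOrder-3}\{\partial_n z_h\} \|_{0,e} \lesssim h_e^{1/2} |z_h|_{2,K^\pm}$; the symmetric pairing similarly produces $h_e^{1/2} |z_h|_{2}$ times $h_e^{1/2}|w_h|_{2}$ or $|w_h|_1$. Summing over edges with Cauchy--Schwarz in the element index and using the mesh regularity Assumption~\ref{assumption: mesh regularity} (so that $h_e \simeq h_K \lesssim h$ and each edge is shared by a bounded number of elements) collapses everything to $h\big(|w_h|_{1,h}|z_h|_{2,h} + |w_h|_{2,h}|z_h|_{2,h}\big)$.

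The main obstacle is the careful bookkeeping of the two different nonconformity conditions: the jump of $w_h$ only kills polynomials up to degree $\polOrder-3$, whereas the jump of $\partial_n z_h$ kills up to degree $\polOrder-2$, so the two halves of the split must be treated with the right projection order and I must be sure that after subtracting projections I still get a \emph{full} half-power $h_e^{1/2}$ gain on \emph{each} of the two factors in every product — this is where the $|w_h|_{1,h}$ versus $|w_h|_{2,h}$ distinction in the two summands on the right-hand side comes from (one factor contributes a gradient bound, the other a Hessian bound, depending on which term of the split we are in). A secondary technical point, flagged in Remark~\ref{assump: star shaped wrt a ball}, is that the trace/approximation estimates on $w_h$ near the vertices require the existence of the interior star-centre $x_K$ and the associated shape-regular sub-triangulation, which is exactly what lets one control $\| w_h - \text{vertex interpolant} \|_{0,e}$ by $h_e^{1/2}|w_h|_{1,K}$ without picking up a spurious $|w_h|_{2}$ or an inverse power of $h$; I would cite that remark and the corresponding argument in \cite{antonietti_fully_2018} rather than reprove it. Apart from this, all remaining steps are routine scaled trace and inverse inequalities, so the heart of the proof is the edgewise projection-subtraction argument outlined above.
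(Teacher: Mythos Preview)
Your plan is essentially the paper's: rewrite the sum edge-by-edge as $\sum_e\int_e(\{w_h\}[\partial_n z_h]+\{\partial_n z_h\}[w_h])\,\ds$, handle the first half by subtracting edge projections from both factors, and handle the second half via the sub-triangle linear Lagrange interpolant $I^1_{T(e)}$ of $w_h$ (Remark~\ref{assump: star shaped wrt a ball}, \cite{antonietti_fully_2018}). Your treatment of the first half is correct and matches the paper up to the (immaterial) choice of projection order.

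There is, however, a concrete mistake in your handling of the second half. You claim that the sub-triangle construction yields $\|w_h-\text{vertex interpolant}\|_{0,e}\lesssim h_e^{1/2}|w_h|_{1,K}$ and that you want to \emph{avoid} a $|w_h|_{2}$ term. This is backwards. The linear Lagrange interpolant on $T(e)$ is not $H^1$-stable; the standard estimate is $\|w_h-I^1_{T(e)}w_h\|_{0,T(e)}\lesssim h_K^2|w_h|_{2,T(e)}$, which after a trace inequality gives $\|[w_h-I^1_{T(e)}w_h]\|_{0,e}\lesssim h_e^{3/2}|w_h|_{2,K}$. Pairing this with $\|\{\partial_n z_h\}\|_{0,e}$ via trace is what produces the $h\,|w_h|_{2,h}|z_h|_{2,h}$ summand on the right-hand side; the $|w_h|_{2,h}$ is not spurious but essential. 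Your alternative route---subtract $\cP^e_{\polOrder-3}\{\partial_n z_h\}$ and bound $\|[w_h]\|_{0,e}$ by $h_e^{1/2}|w_h|_{1,K}$ via edge Poincar\'e---fails for $\polOrder=2$: there $\prob_{\polOrder-3}(e)=\{0\}$, so nothing can be subtracted from $\{\partial_n z_h\}$, and the raw trace bound $\|\{\partial_n z_h\}\|_{0,e}\lesssim h_e^{-1/2}|z_h|_{1,K}+h_e^{1/2}|z_h|_{2,K}$ leaves an $O(1)$ term $|z_h|_{1,K}|w_h|_{1,K}$ with no $h$ factor. The paper's $h_e^{3/2}|w_h|_{2,K}$ bound is exactly what absorbs that $h_e^{-1/2}$, uniformly in $\polOrder$. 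So: keep your outline, but in the $\{\partial_n z_h\}[w_h]$ term use the Lagrange interpolant estimate with $|w_h|_{2,K}$, not $|w_h|_{1,K}$.
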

\subsection{Error estimate for the semidiscrete scheme}
In this subsection we prove the error estimate for the scheme detailed in \eqref{eqn: semidiscrete scheme}. 
\begin{theorem}\label{thm: L2 convergence}
    Assume that \corrections{$u \in H^4(\Omega) \cap H^{\polOrder+1}(\Omega)$} is the solution to the continuous problem \eqref{eqn: cts weak form} and $u_h$ is the solution to \eqref{eqn: semidiscrete scheme}. Then, for all $t \in [0,T]$,
    \begin{align}\label{eqn: L2 convergence}
        \| u - u_h \|_{0,h} \lesssim h^{\polOrder}.
    \end{align}
\end{theorem}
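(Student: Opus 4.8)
The plan is to use the standard error splitting $u - u_h = \rho + \theta$ from \eqref{eqn: error decomposition}, where $\rho = u - \ellipticProj u$ is controlled by Lemma~\ref{lemma: elliptic proj bounds} and $\theta = \ellipticProj u - u_h \in \vemSpace$ is the discrete error whose $L^2$ norm we must bound by $h^{\polOrder}$. Since $\|\rho\|_{0,h} \lesssim \|\rho\|_{1,h} \lesssim h^{\polOrder}$ already follows from \eqref{eqn: elliptic proj in 1norm}, it suffices to show $\|\theta\|_{0,h} \lesssim h^{\polOrder}$ (together with, along the way, a bound on $|\theta|_{1,h}$), and then conclude by the triangle inequality. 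I would obtain an equation for $\theta$ by subtracting the semidiscrete scheme \eqref{eqn: semidiscrete scheme} from the continuous weak form \eqref{eqn: cts weak form} tested against $v_h \in \vemSpace$, carefully accounting for the consistency error coming from the nonconforming discretization (via Lemma~\ref{lemma: nonconformity error}), the mismatch between $b_h$ and $b$ captured by the definition of the elliptic projection \eqref{eqn: elliptic proj}, and the nonlinear term. Concretely, using the defining property \eqref{eqn: elliptic proj} of $\ellipticProj u$, the equation for $\theta$ should read, for all $v_h \in \vemSpace$,
\begin{align*}
    m_h(\partial_t \theta, v_h) + \eps^2 a_h(\theta,v_h) + \alpha m_h(\theta,v_h)
    = \ & \big(\rh(\Pi^h_0 u_h; u_h, v_h) - \rh(u; \ellipticProj u, v_h)\big)
    \\
    & - m_h(\partial_t \rho, v_h) + (\text{terms from } m_h \text{ vs. } L^2 \text{ and consistency}),
\end{align*}
where $\alpha$ is the shift introduced in \eqref{eqn: bilinear form b_h}.

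The core of the argument is then an energy estimate: I would test the $\theta$-equation with $v_h = \theta$, use the coercivity/stability of $a_h$ and $m_h$ (Lemma~\ref{lemma: stability}) to get $\frac{1}{2}\frac{d}{dt}\|\theta\|_{0,h}^2 + \eps^2 \alpha_* |\theta|_{2,h}^2 \lesssim (\text{right-hand side with } v_h = \theta)$. The nonlinear term on the right is controlled by Lemma~\ref{lemma: nonlinear bounds}, which gives a bound of the form $\big(|\theta|_{1,h} + \|\theta\|_{0,h} + \|\rho\|_{0,h} + h^{\polOrder}\big)|\theta|_{1,h}$; here the troublesome $|\theta|_{1,h}^2$ term must be absorbed into the left-hand side using $|\theta|_{1,h} \lesssim |\theta|_{2,h}$ (a broken Poincaré-type inequality on $\vemSpace$, valid because of the nonconformity constraints in Definition~\ref{defn: H2 nonconforming space}) together with a Young's inequality, so that a small multiple of $\eps^2|\theta|_{2,h}^2$ is subtracted from both sides. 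The remaining cross terms $\|\theta\|_{0,h}|\theta|_{1,h}$, $\|\rho\|_{0,h}|\theta|_{1,h}$, $h^{\polOrder}|\theta|_{1,h}$ are handled by Young's inequality, again absorbing the $|\theta|_{1,h}^2$ pieces and leaving $\|\theta\|_{0,h}^2$, $\|\rho\|_{0,h}^2 \lesssim h^{2\polOrder}$, and $h^{2\polOrder}$ on the right. The term $m_h(\partial_t \rho,\theta)$ is bounded by $\|\partial_t\rho\|_{0,h}\|\theta\|_{0,h} \lesssim h^{\polOrder}\|\theta\|_{0,h}$ using \eqref{eqn: elliptic proj time in 1 norm}. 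The consistency error terms (differences between $m_h$ and the $L^2$ inner product, between $a_h$ and $a$, and the nonconformity contribution $\nonconformity(u,\theta)$) are each $O(h^{\polOrder-1})$ times $|\theta|_{2,h}$ or $O(h^{\polOrder})$ times lower norms of $\theta$; the $h^{\polOrder-1}$-type ones must be paired against $|\theta|_{2,h}$ and, after Young, absorbed — here one should be careful that the square gives $h^{2\polOrder-2}$, which is \emph{not} $O(h^{2\polOrder})$, so Lemma~\ref{lemma: boundary term bound} (which supplies an extra factor of $h$) is exactly what is needed to upgrade such boundary/consistency contributions to the right order. Collecting everything yields
\begin{align*}
    \frac{d}{dt}\|\theta\|_{0,h}^2 \lesssim \|\theta\|_{0,h}^2 + h^{2\polOrder}.
\end{align*}

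Finally I would invoke Grönwall's inequality on $[0,t]$. For the initial value, choosing $u_{h,0}$ appropriately (e.g. $u_{h,0} = \globalInterpolation u_0$ or the elliptic projection of $u_0$) gives $\|\theta(0)\|_{0,h} = \|\ellipticProj u_0 - u_{h,0}\|_{0,h} \lesssim h^{\polOrder}$, so Grönwall produces $\|\theta(t)\|_{0,h}^2 \lesssim (h^{2\polOrder} + h^{2\polOrder}) e^{Ct} \lesssim h^{2\polOrder}$ uniformly on $[0,T]$. Combining with $\|\rho\|_{0,h} \lesssim h^{\polOrder}$ via the triangle inequality gives $\|u - u_h\|_{0,h} \lesssim h^{\polOrder}$, which is \eqref{eqn: L2 convergence}. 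The main obstacle I anticipate is the bookkeeping around the nonconforming consistency terms: keeping track of exactly which terms come with $h^{\polOrder-1}|\theta|_{2,h}$ versus $h^{\polOrder}|\theta|_{\le 1,h}$, ensuring every $|\theta|_{2,h}^2$ and $|\theta|_{1,h}^2$ contribution is genuinely absorbable into the coercive left-hand side with room to spare, and using Lemma~\ref{lemma: boundary term bound} in the right place so that no stray $h^{2\polOrder-2}$ term survives — this is the step where the argument could most easily go wrong and where the hypothesis $\polOrder \ge 2$ and the precise form of the discrete bilinear forms really matter.
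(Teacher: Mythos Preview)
Your overall strategy matches the paper: split $u-u_h=\rho+\theta$, derive an error equation for $\theta$, test with $\theta$, control the nonlinear difference via Lemma~\ref{lemma: nonlinear bounds}, and close with Gronwall. However, two points diverge from the paper in ways that matter.

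First, the $\theta$--equation is cleaner than you anticipate. Because the elliptic projection is defined by \eqref{eqn: elliptic proj} with the \emph{exact} right-hand side $(\eps^2\Delta^2 u-\nabla\cdot(\phi'(u)\nabla u)+\alpha u,\chi_h)$, substituting it into the semidiscrete scheme eliminates both the nonconformity term $\nonconformity(u,\theta)$ and any $a_h$--$a$ consistency residual. What remains on the right is only $m_h((\ellipticProj u)_t,\theta)-(u_t,\theta)$, $\alpha(\rho,\theta)$, and the $r_h$ difference handled by Lemma~\ref{lemma: nonlinear bounds}; there are no stray $h^{\polOrder-1}|\theta|_{2,h}$ consistency terms to worry about. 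So your concern about ``bookkeeping around the nonconforming consistency terms'' and using Lemma~\ref{lemma: boundary term bound} to upgrade them is misplaced---that lemma is needed elsewhere.

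Second, and this is the real gap: your plan to absorb the $|\theta|_{1,h}^2$ term by a broken Poincar\'e inequality $|\theta|_{1,h}\lesssim|\theta|_{2,h}$ does not work. Even if such an inequality holds on $\vemSpace$, its constant is fixed by the geometry and is not small; you cannot absorb $C|\theta|_{1,h}^2\le CC_P|\theta|_{2,h}^2$ into $\eps^2\alpha_*|\theta|_{2,h}^2$ without an unjustified smallness assumption (and Young's inequality does nothing for a term that is already a square). The paper instead writes, elementwise,
\[
|\theta|_{1,K}^2=-\int_K(\Delta\theta)\theta\,\dx+\int_{\partial K}\theta\,\partial_n\theta\,\ds,
\]
bounds the volume term by $\gamma\|\Delta\theta\|_{0,K}^2+C_\gamma\|\theta\|_{0,K}^2$ with $\gamma$ an \emph{arbitrary} small parameter, and uses Lemma~\ref{lemma: boundary term bound} (with $w_h=z_h=\theta$) to bound the summed boundary term by $Ch|\theta|_{2,h}^2$. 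This yields the interpolation-type estimate $|\theta|_{1,h}^2\lesssim(\gamma+h)|\theta|_{2,h}^2+C_\gamma\|\theta\|_{0,h}^2$, and now the coefficient of $|\theta|_{2,h}^2$ can be made small enough to absorb for \emph{any} fixed $\eps$. That is the actual role of Lemma~\ref{lemma: boundary term bound} in this proof.
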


\begin{proof}
    Recall the error decomposition detailed in \eqref{eqn: error decomposition}, and notice that due to Lemma \ref{lemma: elliptic proj bounds}, we only need to estimate $\theta$.
    Following the ideas in \cite{antonietti_$c^1$_2016}, we use the definition of $\theta$ and the semidiscrete scheme \eqref{eqn: semidiscrete scheme} to show that 
    \begin{align}\label{eqn: expansion of discrete form with theta}
        \ahlower ( \theta_t, \chi_h) &+ \eps^2 \ahhessian (\theta,\chi_h ) 
        \notag
        \\
        = \ &
        \ahlower ( (\ellipticProj u)_t , \chi_h) + \eps^2 \ahhessian ( \ellipticProj u ,\chi_h) 
        \nonumber
        - 
        \left( 
            \ahlower (  (u_h)_t , \chi_h) + \eps^2 \ahhessian (  u_h ,\chi_h) 
        \right)
        \notag
        \\ 
        = \ &
        \ahlower ( (\ellipticProj u)_t , \chi_h) + \eps^2 \ahhessian ( \ellipticProj u ,\chi_h) 
        + \rh(\Pi^h_0 u_h;u_h,\chi_h).
    \end{align}
    Using the definition of the elliptic projection in \eqref{eqn: elliptic proj} and \eqref{eqn: bilinear form b_h}, it follows that 
    \begin{align*}
        \eps^2 \ahhessian ( \ellipticProj u, \chi_h) 
        &=
        b_h ( \ellipticProj u ,\chi_h) - \rh ( u; \ellipticProj u, \chi_h) - \alpha (\ellipticProj u, \chi_h) 
        \\
        &= 
        (\eps^2 \Delta^2 u -\nabla \cdot ( \phi^{\prime} (u) \nabla u) + \alpha u , \chi_h ) - \rh ( u; \ellipticProj u, \chi_h) - \alpha (\ellipticProj u, \chi_h). 
    \end{align*}
    Therefore, substituting this into \eqref{eqn: expansion of discrete form with theta} we see that 
    \begin{align}\label{eqn: expanded scheme}
        \ahlower ( \theta_t, \chi_h) + \eps^2 \ahhessian (\theta,\chi_h ) 
        \notag
        =& \
        \ahlower ( (\ellipticProj u)_t , \chi_h)  
        +
        (\eps^2 \Delta^2 u -\nabla \cdot ( \phi^{\prime} (u) \nabla u), \chi_h )  
        \notag
        \\
        &+ \alpha ( u - \ellipticProj u, \chi_h) 
        +
        \rh(\Pi^h_0 u_h;u_h,\chi_h) - \rh ( u; \ellipticProj u, \chi_h) 
        \notag
        \\
        =& \
        \ahlower ( (\ellipticProj u)_t , \chi_h) - (u_t,\chi_h) + \alpha ( \rho ,\chi_h) 
        \notag
        \\
        &+ \rh(\Pi^h_0 u_h;u_h,\chi_h) - \rh ( u; \ellipticProj u, \chi_h). 
    \end{align}

    Following the same method used in \cite{zhao2019nonconforming}, combined with polynomial consistency from Lemma \ref{lemma: consistency}, it follows that 
    \begin{align*}
        | \ahlower ( (\ellipticProj u)_t , \chi_h) - (u_t,\chi_h) |
        = \ & 
        \left|
        \sum_{K \in \cT_h} \ahlowerLocal ( (\ellipticProj u)_t, \chi_h) - (u_t,\chi_h)_K 
        \right| 
        \\
        = \ &
        \left|
        \sum_{K \in \cT_h} \ahlowerLocal ( (\ellipticProj u)_t - \cP_{\polOrder}^K u_t, \chi_h)
        -
        (u_t - \cP_{\polOrder}^K u_t , \chi_h)_K
        \right|
        \\ 
        \lesssim \ & 
        \sum_{K \in \cT_h} \| (\ellipticProj u)_t - \cP_{\polOrder}^K u_t \|_{0,K} \| \chi_h \|_{0,K} 
        \\
        &+ \| u_t - \cP_{\polOrder}^K u_t \|_{0,K} \| \chi_h \|_{0,K}.
    \end{align*}
    Where we have used stability of the bilinear form (Lemma \ref{lemma: stability}) in the last step. We now use Lemma \ref{lemma: elliptic proj bounds} and properties of the $L^2$ projection detailed in Theorem \ref{thm: interpolation estimates}, to show that 
    \begin{align*}
        \left| \ahlower ( (\ellipticProj u)_t , \chi_h) - (u_t,\chi_h) \right|
        \lesssim
        h^{\polOrder} \| \chi_h \|_{0,h}.
    \end{align*}

    Now, we take $\chi_h = \theta$ in \eqref{eqn: expanded scheme} and see that   
    \begin{align*}
        \ahlower ( \theta_t, \theta ) + \eps^2 \ahhessian (\theta,\theta )  
        =& \
        \ahlower ((\ellipticProj u)_t, \theta) - (u_t,\theta) + \alpha (\rho, \theta) 
        \\
        &+
        \rh(\Pi^h_0 u_h;u_h,\theta) - \rh ( u; \ellipticProj u, \theta).
    \end{align*} 
    Using stability properties of the discrete forms alongside Lemmas \ref{lemma: elliptic proj bounds} and \ref{lemma: nonlinear bounds}, it holds that
    \begin{align*}
        \half \frac{d}{dt} \| \theta \|_{0,h}^2 + \eps^2 | \theta |_{2,h}^2 
        &\lesssim
            ( h^{\polOrder} + \alpha \| \rho \|_{0,h} ) \| \theta \|_{0,h} 
            + \left| \rh(\Pi^h_0 u_h;u_h,\theta) - \rh ( u; \ellipticProj u, \theta) \right|
        \\
        &\lesssim
            h^{\polOrder} \| \theta \|_{0,h} 
            + ( \| \theta \|_{0,h} + h^{\polOrder} 
            )
            | \theta |_{1,h}.
    \end{align*}
    After an application of Young's inequality, we get 
    \begin{align*}
        \half \frac{d}{dt} \| \theta \|_{0,h}^2 + \eps^2 | \theta |_{2,h}^2 
        &\lesssim
            h^{2\polOrder} + \| \theta \|_{0,h}^2 + | \theta |_{1,h}^2.    
    \end{align*}
    In order to conclude the proof, we observe that for any $v_h \in \vemSpace$ and $0< \gamma \leq \half$, we can use an application of integration by parts, Cauchy-Schwarz, and Lemma \ref{lemma: boundary term bound}, to show the following holds 
    \jsccorrections{\begin{align*}
        | \theta |_{1,h}^2 = \sum_{\element \in \mesh} | \theta |_{1,K}^2 &= \sum_{\element \in \mesh} \left( - \int_K (\Delta \theta) \theta \, \dx + \int_{\partial K} \theta \partial_n \theta \, \ds \right)
        \\
        &\lesssim ( \gamma + h) | \theta |_{2,h}^2 + C_{\gamma} \| \theta \|_{0,h}^2 + h | \theta |_{1,h} | \theta |_{2,h}
        \\
        &\lesssim
        ( \gamma + 3h/2 ) | \theta |_{2,h}^2 + C_{\gamma} \| \theta \|_{0,h}^2 + h/2 |\theta |_{1,h}^{2}.
    \end{align*}
    }\jsccorrections{Where we have applied Young's inequality in the last step. Therefore, provided that $h$ is sufficiently small,} it holds that $|\theta|^2_{1,h} \lesssim (\gamma +3h/2) |\theta|_{2,h}^2 + C_{\gamma} \| \theta \|^2_{0,h}$.
    Hence assuming further that $\gamma$ is also sufficiently small \jsccorrections{i.e. $(\gamma + 3h/2) \leq \half \eps^2$}, it holds that
    \begin{align*}
        \frac{d}{dt} \| \theta \|_{0,h}^2 +  \eps^2 | \theta |_{2,h}^2 
        &\lesssim
            h^{2\polOrder} + \| \theta \|_{0,h}^2.
    \end{align*}
    Therefore equation \eqref{eqn: L2 convergence} follows from an application of Gronwall's lemma. 
\end{proof}

\jsccorrections{\begin{remark}\label{rmk: better CH estimates}
    We note that using the techniques to prove Theorem~\ref{thm: L2 convergence} leads to error bounds which depend on $T$ and $1/\eps^2$ exponentially. 
    It is worth mentioning that there are alternative techniques for error estimates of this form where convergence results lead to polynomially dependence only \cite{li2019error,wu2020analysis}. 
    See also the related work on Allen-Cahn \cite{chrysafinos2020posteriori}.
\end{remark}}
\section{Numerical Results}\label{sec: numerical testing}
In this section we present a fully discrete scheme and investigate its behaviour numerically.
The fully discrete scheme couples the VEM spatial discretization with a Runge-Kutta (RK) scheme, which combines a convex splitting (CS) method with a multi-stage additive RK method. 
\corrections{Since we present a higher order spatial discretization we couple the VEM discretization with a higher order time stepping method.}
We use a nonlinear convex splitting of the energy $E(u)$, defined in \eqref{eqn: energy functional}, see e.g. \cite{elliott1993global,eyre1998unconditionally1,lee2020stability,shin2017unconditionally}, where we split the energy into a contractive and expansive part as follows
\begin{align*}
    E(u) = E_c(u) - E_e(u) = \int_{\Omega} \left( \frac{u^4}{4} + \frac{1}{4} + \frac{\eps^2}{2} |\nabla u |^2 \right) \, \dx - \int_{\Omega} \frac{u^2}{2} \, \dx,
\end{align*}
where $E_c(u)$ is treated implicitly \corrections{while} $E_e(u)$ is treated explicitly. 
It is straightforward to show that $E_c(u)$ and $E_e(u)$ are both convex.
\ifthenelse{\boolean{thesis}}{
    We denote two RK methods using standard Butcher notation
    \begin{align}\label{eqn: Butcher table}
        \begin{array}{c|c}
            c & A
            \\
            \hline
            & b^T      
        \end{array}
        =
        \begin{array}{c|ccc}
            c_1 & a_{11} & \cdots & a_{1s} 
            \\   
            \vdots & \vdots & \ddots & \vdots
            \\
            c_s & a_{s1} & \cdots & a_{ss}
            \\
            \hline
            & b_{1} & \cdots & b_{s}   
        \end{array}
        ,
        \quad 
        \begin{array}{c|c}
            \hat c & \hat A
            \\
            \hline
            & \hat b^T        
        \end{array}
        =
        \begin{array}{c|ccc}
            \hat c_1 & \hat a_{11} & \cdots & \hat a_{1s} 
            \\   
            \vdots & \vdots & \ddots & \vdots
            \\
            \hat c_s & \hat a_{s1} & \cdots & \hat a_{ss}
            \\
            \hline
            & \hat b_{1} & \cdots & \hat b_{s}   
        \end{array}
    \end{align}
    for $A, \hat A \in \R^{s \times s}$, $b,\hat b \in \R^s$ and the coefficients $c, \hat c \in \R^s$ are given by $c=A1, \hat c = \hat A1$. 
}{}

\begin{definition}[Fully discrete scheme]
    We split $[0,T]$ into uniform subintervals, \corrections{each of length $\tau=T/\hat N$} and \corrections{discretize} the solution at time $t_n$, \corrections{${n=0,\dots,\hat N}$,} \corrections{as} $u^{n}_{h \tau} = u_h(\cdot,t_n) \in \vemSpace$. We first split the semilinear form $r_h$ according to the convex splitting above,
    recalling that ${\phi^{\prime} (w) = 3w^2 - 1}$, this is given by
    \begin{align*}
        r_{h,c}  ( u_{h \tau}^n ; v_h, w_h ) &-  r_{h,e} ( u_{h \tau}^n ; v_h, w_h )
        \\
        &
        =
        \sum_{K \in \mesh} \int_K 3 ( u_{h \tau}^n)^2 \gradProj v_h \cdot \gradProj w_h \, \dx
        -
        \sum_{K \in \mesh} \int_K \gradProj v_h \cdot \gradProj w_h \, \dx
    \end{align*}
    for $u_{h \tau}^n, v_h, w_h \in \vemSpace$. 
    Our fully discrete scheme is therefore \corrections{as follows: given $u^0_{h \tau} = u_{h,0} \in \vemSpace$, for $n=0,\dots,\hat N-1,$ find $u_{h \tau}^n \in \vemSpace$ such that}   
    \begin{align*}
        m_h ( u^{n+1}_{h \tau} - u^n_{h \tau} , v_h )
        + \tau \sum_{i=1}^s &\bigg(
        b_i \left( \eps^2 a_h ( U^i , v_h) + r_{h,c} ( \Pi^h_0 U^i ; U^i, v_h) \right) 
        \\
        &- 
        \hat b_i r_{h,e} ( \Pi^h_0 U^i ; U^i, v_h) \bigg) = 0
    \end{align*}
    where the $i$-th stage is defined by
    \begin{align*}
        m_h ( U^i - u^n_{h\tau},v_h ) + \tau \sum_{j=1}^s &\bigg(
        a_{ij} \left( \eps^2 a_h ( U^j , v_h) + r_{h,c} ( \Pi^h_0 U^j ; U^j, v_h) \right)
        \\ 
        &- \hat a_{ij} r_{h,e} ( \Pi^h_0 U^j ; U^j, v_h) \bigg) = 0  
    \end{align*}
    for all $v_h \in \vemSpace$, for $i=1,\dots,s$.
    The coefficients are defined using standard Butcher notation for RK methods and satisfy $A, \hat A \in \R^{s \times s}$, $b,\hat b \in \R^s$. 
\end{definition}

The first scheme we use is the simple first order CS ``Forward-Backward Euler" method (CSRK-1) \cite{shin2017unconditionally}, with the standard Butcher notation
\begin{align}\label{eqn: IMEX1 Butcher tables}
    \begin{array}{c|c}
        c & A 
        \\
        \hline
        & b^T        
    \end{array}
    =
    \begin{array}{c|cc}
        0 & 0 & 0 
        \\
        1 & 0 & 1 
        \\   
        \hline
         & 0 & 1   
    \end{array}
    ,
    \quad 
    \begin{array}{c|c}
        \hat c & \hat A 
        \\
        \hline
        & \hat b^T        
    \end{array}
    =
    \begin{array}{c|cc}
        0 & 0 & 0 
        \\
        1 & 1 & 0 
        \\   
        \hline
         & 1 & 0   
    \end{array}
    .
\end{align}
The other method we consider is the second order CS Runge-Kutta method (CSRK-2) presented in \cite{lee2020stability}, rewritten in Butcher notation as
\begin{align}\label{eqn: IMEX2 Butcher tables}
    \begin{array}{c|c}
        c & A 
        \\
        \hline
        & b^T        
    \end{array}
    =
    \begin{array}{c|cccc}
        0 & 0 & 0 & 0 & 0\\
        1 & 0 & 1 & 0 & 0\\
        \frac{3}{2} & 0 & \half & 1 & 0\\
        1 & 0 & 1 & -1 & 1 \\
        \hline
        & 0
        & 1 
        & 
        -1
        &
        1
    \end{array}
    ,
    \qquad
    \begin{array}{c|c}
        \hat c & \hat A 
        \\
        \hline
        & \hat b^T        
    \end{array}
    =
    \begin{array}{c|cccc}
        0 & 0 & 0 & 0 & 0\\
        1 & 1 & 0 & 0 & 0\\
        \frac{3}{2} & \half & 1 & 0 & 0\\
        1 & 1 & -1 & 1 & 0\\
        \hline
        & 1 
        & 
        -1
        &
        1
        &
        0
    \end{array}
    .
\end{align}
The schemes defined in both \eqref{eqn: IMEX1 Butcher tables} and \eqref{eqn: IMEX2 Butcher tables} are shown to be energy stable for the semidiscrete (continuous-in-space) schemes \cite{lee2020stability,shin2017unconditionally}. 
We \emph{numerically} investigate the energy decay property of our proposed fully discrete scheme in Test 3 in Section~\ref{sec: evo cross}.

\corrections{We point out that the lowest order ($\polOrder=2$) nonconforming VEM on triangles is identical to the Morley nonconforming finite element and so, with the exception of Test 1 in Section~\ref{subsec: test 1}, we only investigate the behaviour of the higher order $\polOrder=4$ VEM since the other case has been studied extensively in the literature.}

The code we use to carry out the simulations is based on the Distributed and Unified Numerics Environment (\textsc{Dune}) software framework \cite{dunegridpaperII} and has been implemented within the \textsc{Dune-Fem} module \cite{dedner2010generic}. 
\textsc{Dune} is open source software implemented in \texttt{C++}, but since a user has access to a Python frontend \cite{dedner_dune_2018}, they can easily perform numerical experiments by describing mathematical models using the domain specific form language UFL \cite{alnaes_unified_2012}. 
Tutorials including some VEM examples can be found in \cite{dedner2020python} \corrections{while further implementation details can be found in \cite{dedner2022framework}}.

\ifthenelse{\boolean{thesis}}{
\subsection{Test 1: VEM convergence}
For this first experiment we investigate convergence to an exact solution to verify convergence of our VEM discretization applied to the Cahn-Hilliard equation. We set the forcing $f$ so that the exact solution is given by 
\begin{align*}
    u(x,y,t) = \sin(2\pi t)\cos(2\pi x)\cos(2\pi y). 
\end{align*}
In this test we use a small time step, namely $\tau=10^{-5}$ and fix $\eps = 1/10$. We run the convergence test on both a structured simplex grid and a Voronoi grid for discretizing the unit square. The $L^2, H^1,$ and $H^2$ errors are computed at every time step and we present the maximum errors in each case.

We investigate the convergence for polynomial orders $\polOrder=2$ and for the first order Forward-Backward Euler (CSRK-1) \eqref{eqn: IMEX1 Butcher tables} time stepping method. 

The results detailed in Tables \ref{tab: ch_convergencespatial_FE-BE_criss_p2} and \ref{tab: ch_convergencespatial_FE-BE_voronoi_p2} show the errors and experimental orders of convergence (eocs) from Test 1, and are consistent with the result from Theorem \ref{thm: L2 convergence}.

}
{}


\subsection{Test 1: \corrections{Convergence to an exact solution}}\label{subsec: test 1}
The first numerical experiment we consider is a non-physical test to recover the order of convergence presented in Theorem~\ref{thm: L2 convergence} 
\corrections{as we investigate convergence} to a \corrections{known} exact solution.
In this test we reduce $\tau$ alongside $h$, starting with $\tau = 10^{-2}$. We fix $\eps = 1/10$ for these experiments and run them on both the structured simplex ``criss'' grid consisting of half square triangles and a sequence of Voronoi grids discretizing the unit square $\Omega = (0,1)^2$. 
The Voronoi grids are randomly seeded and smoothed using Lloyd's algorithm.
The $L^2, H^1,$ and $H^2$ errors are computed at every time step and we present the maximum errors in each case.
We set the forcing $f$ so that the exact solution is given by 
\begin{align*}
    u(x,y,t) = \sin(2\pi t)\cos(2\pi x)\cos(2\pi y). 
\end{align*}
This is a modification of the test considered in e.g. \cite{antonietti_$c^1$_2016,chave2016hybrid}
where we have changed the exact solution to be nonlinear in $t$.   

We investigate the convergence for polynomial orders $\polOrder=2,4$ for the CSRK-1 \eqref{eqn: IMEX1 Butcher tables} and CSRK-2 \eqref{eqn: IMEX2 Butcher tables} time stepping methods. 
\corrections{For the sake of brevity we don't show results for $\polOrder=3$ but note that they are in line with expectations.}   
The results on the simplex and Voronoi polygonal grids are shown in Tables~\ref{tab: ch_convergencetemporal_FE-BE_criss_p2}-\ref{tab: ch_convergencetemporal_SecondOrder1_criss_p4} and Tables~\ref{tab: ch_convergencetemporal_FE-BE_voronoi_p2}-\ref{tab: ch_convergencetemporal_SecondOrder1_voronoi_p4}, respectively.

The results from this convergence test are in line with expectations. Since the CSRK-1 method is first order accurate in time and the CSRK-2 method is second order accurate, when combining one of these methods with the order $\polOrder$ VEM method, we should expect to see $L^2$ convergence rates of order $\min\{ \polOrder,m \}$ where $m \in \{1,2 \}$ is the order from the CSRK-$m$ method. 
To see the optimal $L^2$ convergence $O(h^{\polOrder})$ (Theorem \ref{thm: L2 convergence}), we look at the $\polOrder=2$ VEM method coupled with the CSRK-2 time stepping. 
These results are shown in Tables~\ref{tab: ch_convergencetemporal_SecondOrder1_criss_p2} and \ref{tab: ch_convergencetemporal_SecondOrder1_voronoi_p2}.

Note that the convergence rates in the $H^1$ and $H^2$ norms are also what we would expect to see if we extended Theorem \ref{thm: L2 convergence} to include convergence results in both $H^1$ and $H^2$ norms according to classic FE theory. 
\corrections{For example, the rate for the $H^2$ error with $\polOrder=2$ is equal to $1$ while the higher order is clearly visible in the $\polOrder=4$, CSRK-2 results (Tables~\ref{tab: ch_convergencetemporal_SecondOrder1_criss_p4} and \ref{tab: ch_convergencetemporal_SecondOrder1_voronoi_p4}).}

    \begin{table}[!h]
   \caption{Test 1: $L^2, H^1,$ and $H^2$ errors and convergence rates for the convergence test with CSRK-1 time stepping and polynomial order $\polOrder=2$ on a structured simplex grid.}
   \label{tab: ch_convergencetemporal_FE-BE_criss_p2}
   {\small
   \begin{tabular*}{\textwidth}{@{\extracolsep{\fill}}rrc|cc|cc|cc}
      \toprule
      size &  dofs &    $h$ & $L^2$-error & $L^2$-eoc & $H^1$-error & $H^1$-eoc & $H^2$-error & $H^2$-eoc \\
      \midrule
      50 &   121 & 0.2828 &  1.9412e-01 &       --- &  1.8409e+00 &       --- &  1.8815e+01 &       --- \\
      200 &   441 & 0.1414 &  9.1284e-02 &      1.09 &  8.5348e-01 &      1.11 &  8.9738e+00 &      1.07 \\
      800 &  1681 & 0.0707 &  4.5175e-02 &      1.01 &  4.1419e-01 &      1.04 &  4.3675e+00 &      1.04 \\
      3200 &  6561 & 0.0354 &  2.2522e-02 &       1.0 &  2.0391e-01 &      1.02 &  2.1492e+00 &      1.02 \\
      \bottomrule
   \end{tabular*}}
\end{table}
    \begin{table}[!h]
   \caption{Test 1: $L^2, H^1,$ and $H^2$ errors and convergence rates for the convergence test with CSRK-2 time stepping and polynomial order $\polOrder=2$ on a structured simplex grid.}
   \label{tab: ch_convergencetemporal_SecondOrder1_criss_p2}
   {\small
   \begin{tabular*}{\textwidth}{@{\extracolsep{\fill}}rrc|cc|cc|cc}
      \toprule
      size &  dofs &    $h$ & $L^2$-error & $L^2$-eoc & $H^1$-error & $H^1$-eoc & $H^2$-error & $H^2$-eoc \\
      \midrule
      50 &   121 & 0.2828 &  1.1203e-01 &       --- &  1.1001e+00 &       --- &  1.6705e+01 &       --- \\
      200 &   441 & 0.1414 &  2.9845e-02 &      1.91 &  3.0147e-01 &      1.87 &  8.0622e+00 &      1.05 \\
      800 &  1681 & 0.0707 &  7.3555e-03 &      2.02 &  7.6541e-02 &      1.98 &  3.9588e+00 &      1.03 \\
      3200 &  6561 & 0.0354 &  1.7710e-03 &      2.05 &  1.8949e-02 &      2.01 &  1.9692e+00 &      1.01 \\
      \bottomrule
   \end{tabular*}}
\end{table}
    \begin{table}[!h]
   \caption{Test 1: $L^2, H^1,$ and $H^2$ errors and convergence rates for the convergence test with CSRK-2 time stepping and polynomial order $\polOrder=4$ on a structured simplex grid.}
   \label{tab: ch_convergencetemporal_SecondOrder1_criss_p4}
   {\small
   \begin{tabular*}{\textwidth}{@{\extracolsep{\fill}}rrc|cc|cc|cc}      
      \toprule
      size &  dofs &    $h$ & $L^2$-error & $L^2$-eoc & $H^1$-error & $H^1$-eoc & $H^2$-error & $H^2$-eoc \\
      \midrule
      50 &   511 & 0.2828 &  2.2046e-02 &       --- &  1.9803e-01 &       --- &  1.8769e+00 &       --- \\
      200 &  1921 & 0.1414 &  5.5596e-03 &      1.99 &  5.0125e-02 &      1.98 &  4.8603e-01 &      1.95 \\
      800 &  7441 & 0.0707 &  1.0552e-03 &       2.4 &  9.7040e-03 &      2.37 &  1.0401e-01 &      2.22 \\
      3200 & 29281 & 0.0354 &  1.6197e-04 &       2.7 &  1.6259e-03 &      2.58 &  2.6636e-02 &      1.97 \\
      \bottomrule
   \end{tabular*}}
\end{table}

    \begin{table}
   \caption{Test 1: $L^2, H^1,$ and $H^2$ errors and convergence rates for the convergence test with CSRK-1 time stepping and polynomial order $\polOrder=2$ on a Voronoi polygonal grid.}
   \label{tab: ch_convergencetemporal_FE-BE_voronoi_p2}
   {\small
   \begin{tabular*}{\textwidth}{@{\extracolsep{\fill}}rrc|cc|cc|cc}
      \toprule
      size &  dofs &    $h$ & $L^2$-error & $L^2$-eoc & $H^1$-error & $H^1$-eoc & $H^2$-error & $H^2$-eoc \\
      \midrule
         25 &   128 & 0.3288 &  2.5850e-01 &       --- &  2.3431e+00 &       --- &  2.2020e+01 &       --- \\
      100 &   503 & 0.1535 &  1.1302e-01 &      1.09 &  1.0281e+00 &      1.08 &  1.0548e+01 &      0.97 \\
      400 &  2003 & 0.0751 &  5.1155e-02 &      1.11 &  4.5992e-01 &      1.13 &  4.9724e+00 &      1.05 \\
      1600 &  8003 & 0.0402 &  2.3960e-02 &      1.21 &  2.1488e-01 &      1.22 &  2.4843e+00 &      1.11 \\
      \bottomrule
   \end{tabular*}}
\end{table}
    \begin{table}
   \caption{Test 1: $L^2, H^1,$ and $H^2$ errors and convergence rates for the convergence test with CSRK-2 time stepping and polynomial order $\polOrder=2$ on a Voronoi polygonal grid.}
   \label{tab: ch_convergencetemporal_SecondOrder1_voronoi_p2}
   {\small
   \begin{tabular*}{\textwidth}{@{\extracolsep{\fill}}rrc|cc|cc|cc}
      \toprule
      size &  dofs &    $h$ & $L^2$-error & $L^2$-eoc & $H^1$-error & $H^1$-eoc & $H^2$-error & $H^2$-eoc \\
      \midrule
         25 &   128 & 0.3288 &  1.9833e-01 &       --- &  1.7472e+00 &       --- &  2.0820e+01 &       --- \\
      100 &   503 & 0.1535 &  4.6127e-02 &      1.92 &  4.5037e-01 &      1.78 &  9.9688e+00 &      0.97 \\
      400 &  2003 & 0.0751 &  1.0867e-02 &      2.02 &  1.0755e-01 &       2.0 &  4.9206e+00 &      0.99 \\
      1600 &  8003 & 0.0402 &  2.5869e-03 &      2.29 &  2.5990e-02 &      2.27 &  2.4679e+00 &       1.1 \\
      \bottomrule
   \end{tabular*}}
\end{table}
    \begin{table}
   \caption{Test 1: $L^2, H^1,$ and $H^2$ errors and convergence rates for the convergence test with CSRK-2 time stepping and polynomial order $\polOrder=4$ on a Voronoi polygonal grid.}
   \label{tab: ch_convergencetemporal_SecondOrder1_voronoi_p4}
   {\small
      \begin{tabular*}{\textwidth}{@{\extracolsep{\fill}}rrc|cc|cc|cc}
         \toprule
         size &  dofs &    $h$ & $L^2$-error & $L^2$-eoc & $H^1$-error & $H^1$-eoc & $H^2$-error & $H^2$-eoc \\
      \midrule
         25 &   457 & 0.3288 &  2.2198e-02 &       --- &  1.9974e-01 &       --- &  1.9335e+00 &       --- \\
         100 &  1807 & 0.1535 &  5.5609e-03 &      1.82 &  5.0147e-02 &      1.81 &  4.8954e-01 &       1.8 \\
         400 &  7207 & 0.0751 &  1.0552e-03 &      2.33 &  9.7041e-03 &       2.3 &  1.0424e-01 &      2.16 \\
         1600 & 28807 & 0.0402 &  1.6145e-04 &       3.0 &  1.6223e-03 &      2.86 &  2.6661e-02 &      2.18 \\
         \bottomrule
      \end{tabular*}}
\end{table}

\begin{figure}
    \begin{center}        
    \subfloat[$t=0$]{
        \includegraphics[width=0.275\textwidth]{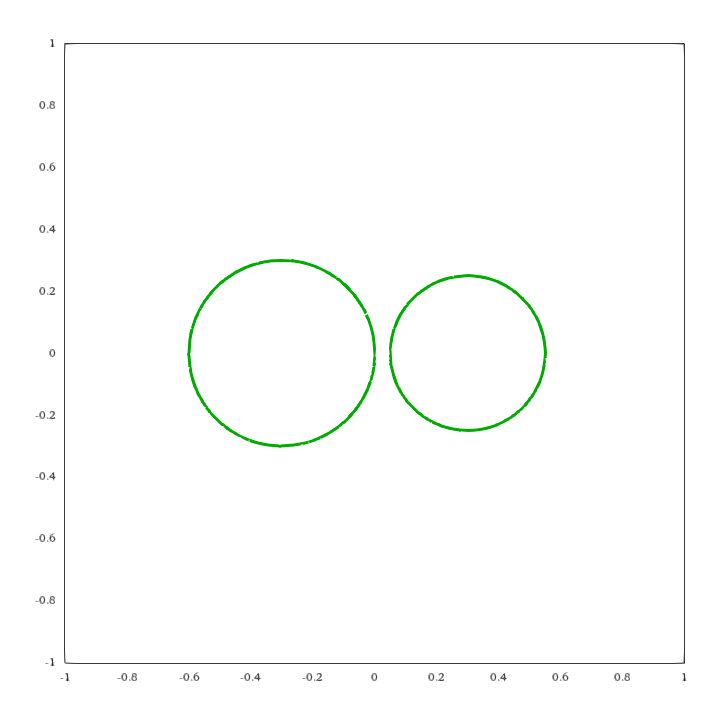}
    }
    \subfloat[$t=0.004$]{
        \includegraphics[width=0.275\textwidth]{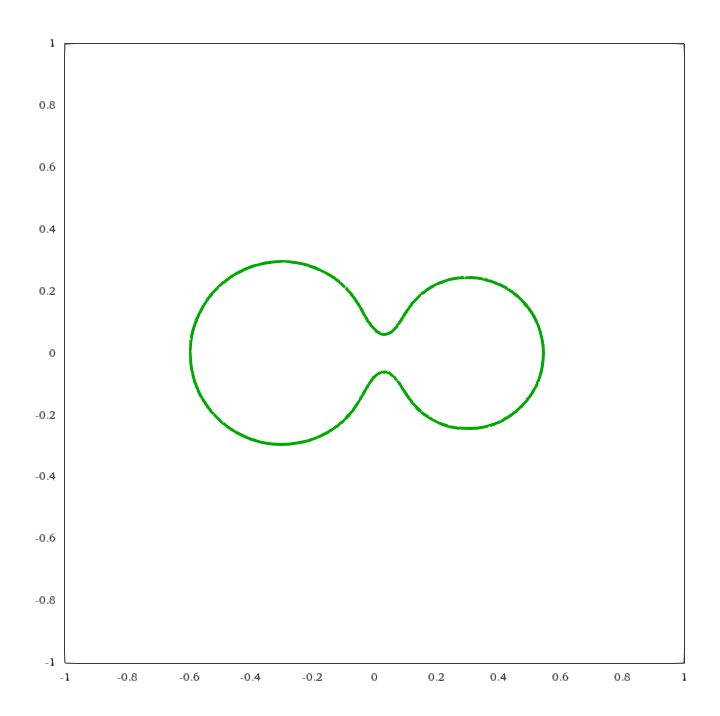}
    }
    \subfloat[$t=0.016$]{
        \includegraphics[width=0.275\textwidth]{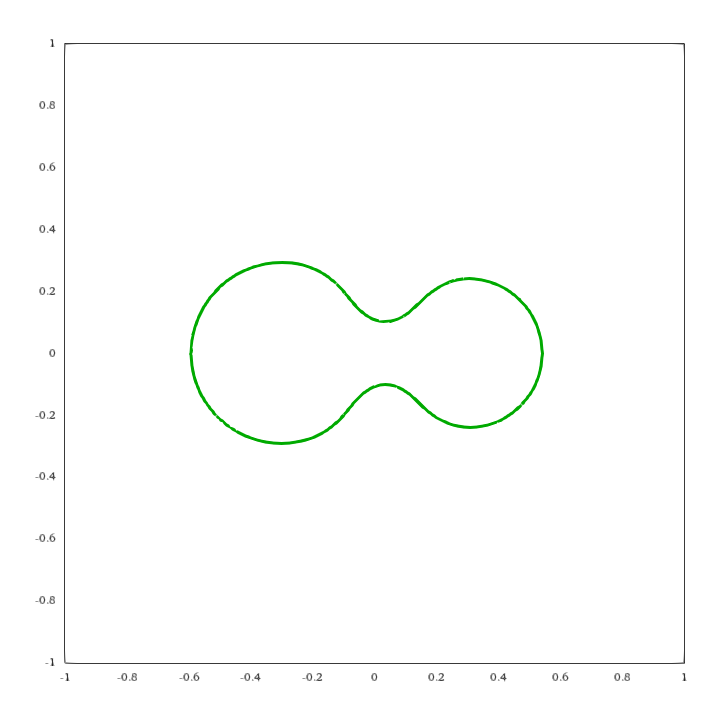}
    }
    \\
    \subfloat[$t=0.048$]{
        \includegraphics[width=0.275\textwidth]{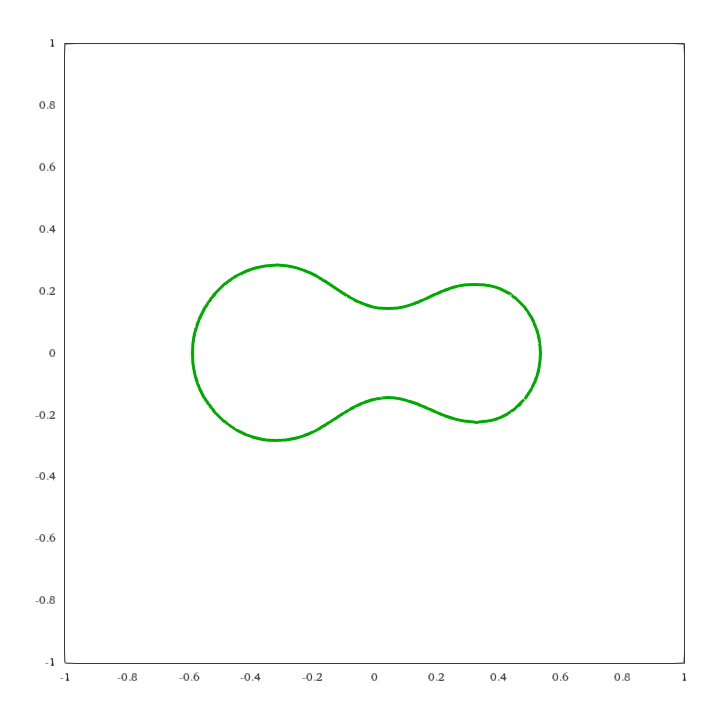}
    }
    \subfloat[$t=0.144$]{
        \includegraphics[width=0.275\textwidth]{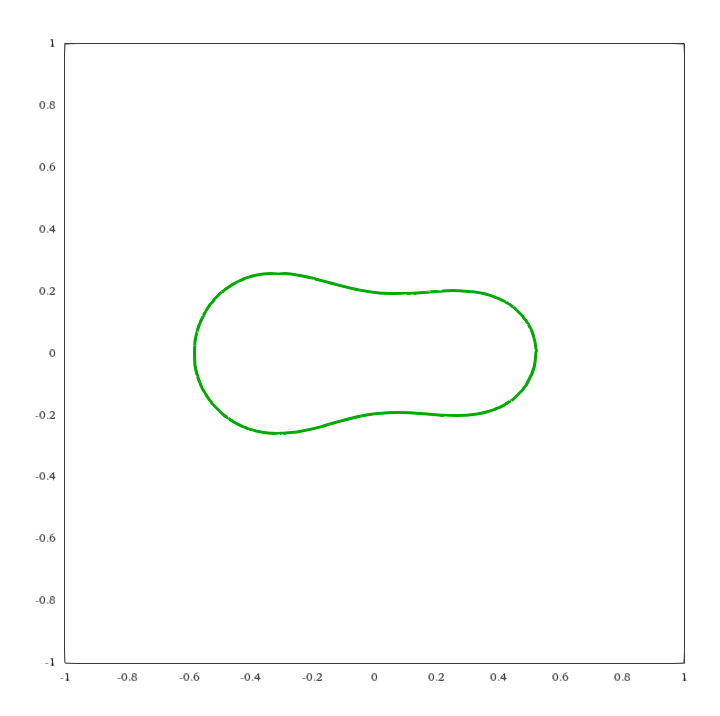}
    }
    \subfloat[$t=0.3$]{
        \includegraphics[width=0.275\textwidth]{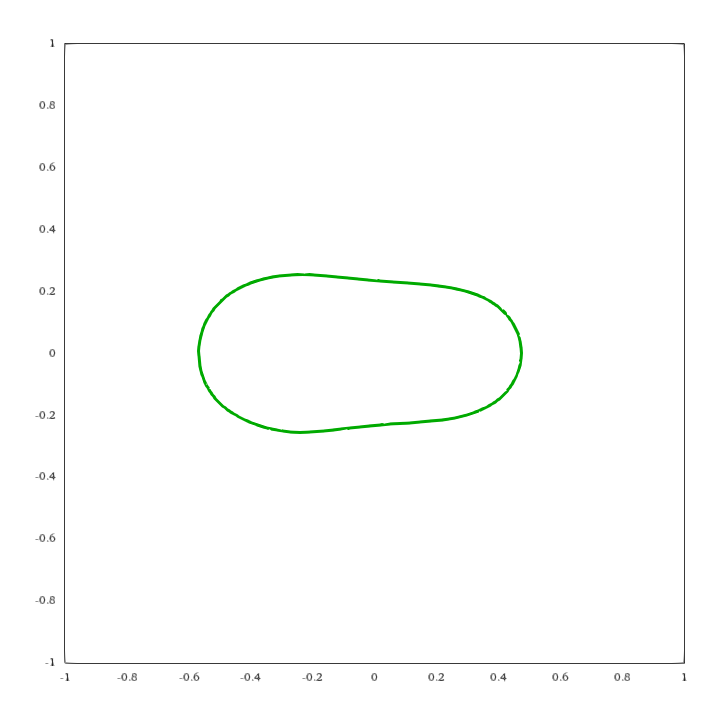}
    }
\end{center}
    \caption{
        Test 2: Two interacting bubbles. Screenshots of the zero-level sets at times $t=0,0.004,0.016,0.048,0.144,0.3$ with $\eps=3/100$ are displayed on the $25 \times 25$ Voronoi polygonal grid.
        }
\label{fig: zero level sets}
\end{figure}

\subsection{Test 2: Two interacting bubbles}\label{subsec: test 2}
\corrections{This test is taken from} \cite{feng2008posteriori,wu2020analysis} \corrections{and is dedicated to the evolution of two ellipses.}
A similar test is investigated in \cite{liu2020fully}.
\corrections{In particular, we take initial data as follows}
\begin{align*}
    u_0(x,y) = {\rm tanh} \left( ((x-0.3)^2 + y^2 - 0.25^2 ) / \eps \right){\rm tanh} \left( ((x+0.3)^2+y^2-0.3^2 )/ \eps\right) 
\end{align*}
where ${\rm tanh}(t) = (e^t - e^{-t})/(e^t+e^{-t})$. 
We monitor the evolution of two ellipses on the domain $\Omega=(-1,1)^2$ and fix $\polOrder=4$, $\eps=3/100$ \corrections{for this experiment}.
\corrections{We use the CSRK-2 \eqref{eqn: IMEX2 Butcher tables} time stepping method with $\tau=10^{-3}$. 
We show results on the $25 \times 25$ Voronoi mesh.
The screenshots of the numerical interface at 6 different fixed time frames is shown in Figure~\ref{fig: zero level sets}.
Here we see that the initial ellipses evolve and
coalesce over time to one ellipse. Note that the observed behaviour and evolution of
the interface is in line with the results displayed in \cite{wu2020analysis} for the Morley element.}

\FloatBarrier

\subsection{Test 3: Evolution of a cross}\label{sec: evo cross}
As considered in e.g. \cite{antonietti_$c^1$_2016,chave2016hybrid,liu2020fully}, for this experiment we monitor the evolution of initial data relating to a cross-shaped interface between phases. The initial data is described as follows.
\begin{align*}
    u_0(x,y) = \begin{cases}
        0.95 \quad &\text{ if }\quad  |(y-\half) - \frac{2}{5}(x-\half)| + | \frac{2}{5}(x-\half) + (y-\half) | < \frac{1}{5},
        \\
        0.95 \quad &\text{ if }\quad  |(x-\half) - \frac{2}{5}(y-\half)| + | \frac{2}{5}(y-\half) + (x-\half) | < \frac{1}{5},
        \\
        -0.95 \quad &\text{ otherwise.}
    \end{cases}
\end{align*}

We carry out this experiment on the unit square using \corrections{the same two mesh types as in Test 1 (as described in Section~\ref{subsec: test 1}) for three different grid sizes.
The grid data for these choices is detailed in Table~\ref{tab: grid size data} alongside the total number of dofs for each of these grid sizes.}
We also take multiple values for the interface parameter: ${\eps = 1/100, 1/50,}$ and $1/25$ and investigate the behaviour of the method for each \emph{fixed} value of epsilon. 
We also fix $\tau = 10^{-3}$, use the CSRK-2 time stepping method \eqref{eqn: IMEX2 Butcher tables} for these simulations and run the test to time $T=0.8$. 

\begin{table}[h]
        \caption{Combinations of $N \times N$ grids showing the size (total number of polygons), grid size $h$, and number of dofs for each of the grid choices.}
        \label{tab: grid size data}
        {\small
        \begin{tabular*}{\textwidth}{@{\extracolsep{\fill}}c|rcr|rcr}
        \toprule
        & \multicolumn{3}{c|}{structured simplex mesh} & \multicolumn{3}{c}{Voronoi mesh}
        \\
        $N \times N$ &  size & $h$ &  dofs &  size & $h$ &  dofs \\
        \midrule
        15 $\times$ 15 &             450 &        0.0943 &            4231 &             225 &        0.1008 &            4057 \\
        25 $\times$ 25 &            1250 &        0.0566 &           11551 &             625 &        0.0676 &           11257 \\
        45 $\times$ 45 &            4050 &        0.0314 &           36991 &            2025 &        0.0373 &           36457 \\
        \bottomrule
        \end{tabular*}}
\end{table}

\corrections{We only show the evolution of the method for this test on the $25 \times 25$ grids for two of the values of interface parameter $\eps=1/100$ (Figure~\ref{fig: evolutionScreenshots criss}) and $\eps=1/25$ (Figure~\ref{fig: evolutionScreenshots voronoi}) at three different fixed time frames.
We overlay the grids in the first images in Figures~\ref{fig: evolutionScreenshots criss} and \ref{fig: evolutionScreenshots voronoi}.}

\corrections{Figures~\ref{fig: end times for Test 2 screenshots simplex} and \ref{fig: end times for Test 2 screenshots voronoi} show the evolution at the end time frame ($T=0.8$) for $\eps=1/100$ and $\eps=1/25$, respectively for both the simplex criss mesh and Voronoi mesh.
We show the grids overlaid in the first figure, Figure~\ref{fig: end times for Test 2 screenshots simplex}.
Each figure contains the end time frame for all grid sizes from left to right $15 \times 15, 25 \times 25$, and $45 \times 45$.
We can see from Figures~\ref{fig: end times for Test 2 screenshots simplex} and \ref{fig: end times for Test 2 screenshots voronoi} that in all cases the initial data evolves to a circular interface even for the coarse $15 \times 15$ grid. 
Again, for the sake of brevity, we do not show the end evolution for $\eps=1/50$.}

\corrections{The energy decay for this problem is shown in Figure \ref{fig: energy plots}.
At each time step we compute the energy $E(u_h)$ \eqref{eqn: energy functional} of the discrete solution $u_h$.
As expected, the energy decreases in nearly all cases.
There is a slight increase in the left figure of Figure~\ref{fig: energy plots eps=1/100} which corresponds to the interface parameter $\eps=1/100$ on the coarse $15 \times 15$ criss grid, indicating that the interface is unresolved even with the higher order scheme on this grid.
We also see in Figures~\ref{fig: end times for Test 2 screenshots simplex} and \ref{fig: end times for Test 2 screenshots voronoi} a slightly larger circular interface on the coarsest grids.} 


\subsection{Test 4: Spinodal decomposition}
For this experiment we turn our attention to the spinodal decomposition of a binary mixture. 
\corrections{As} in \cite{antonietti_$c^1$_2016,chave2016hybrid}, to model this phenomenon we choose the initial data $u_0$ to be a random perturbation between $-1$ and $1$ located in a circle of diameter $0.3$ in the centre of the domain and $0$ elsewhere. 
We take the interface parameter to be $\eps=1/100$, with time step $\tau = 10^{-2}$, and we use the CSRK-2 time stepping method \eqref{eqn: IMEX2 Butcher tables}. 
Snapshots of the results on both the structured simplex
grid and the Voronoi polygonal mesh are shown in Figure~\ref{fig: Test 3 results for p=4 on both grids}.
Note that the random initial conditions used for the two grids are different and result in the difference in the end configurations seen in Figure \ref{fig: Test 3 results for p=4 on both grids}.

\begin{figure}[!ht]
    \begin{center}
        \subfloat[Results on the criss grid with $\eps=1/100$.]{
            \label{fig: evolution simplex gamma=1/100}
            \includegraphics[width=0.155\textwidth]{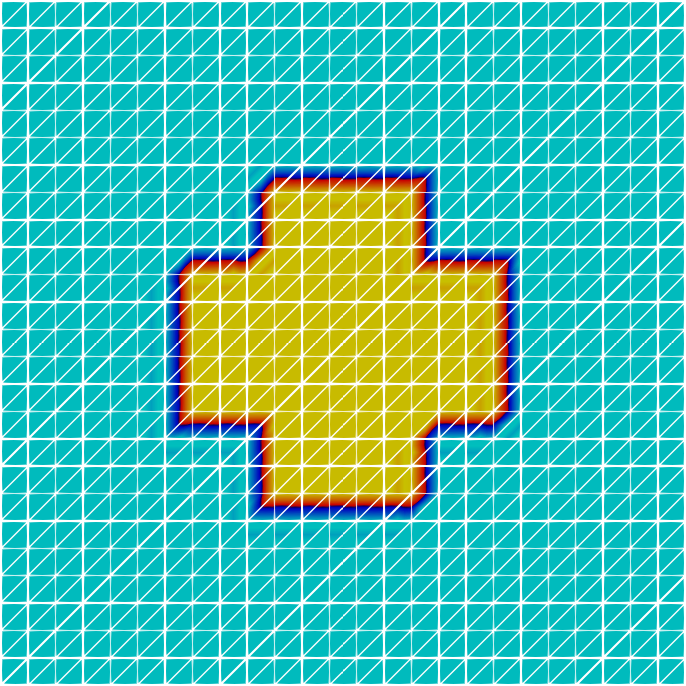}
            \includegraphics[width=0.155\textwidth]{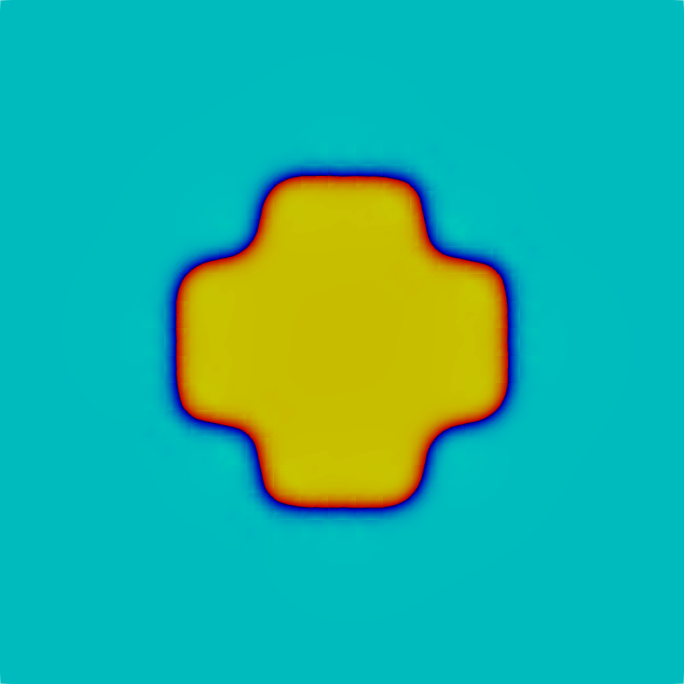}
            \includegraphics[width=0.155\textwidth]{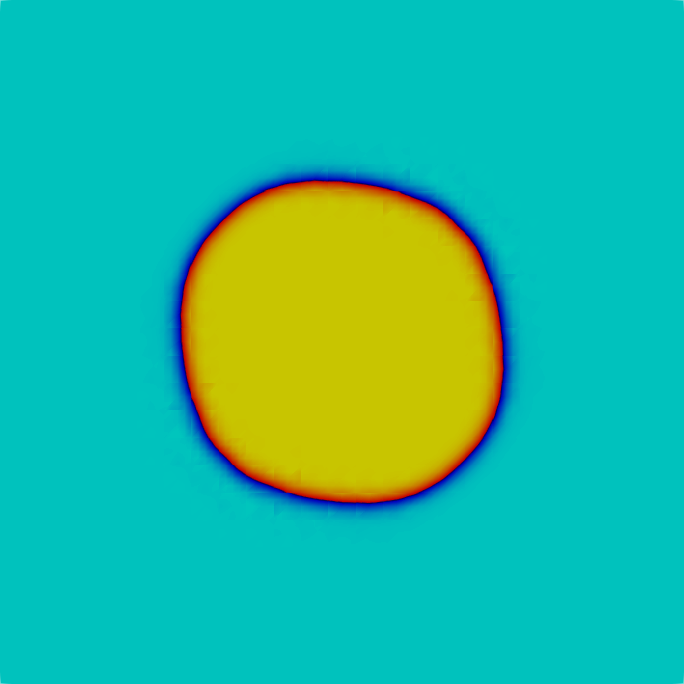}
        }
        \subfloat[Results on the Voronoi grid with $\eps=1/100$.]{
            \label{fig: evolution voronoi gamma=1/100}
            \includegraphics[width=0.155\textwidth]{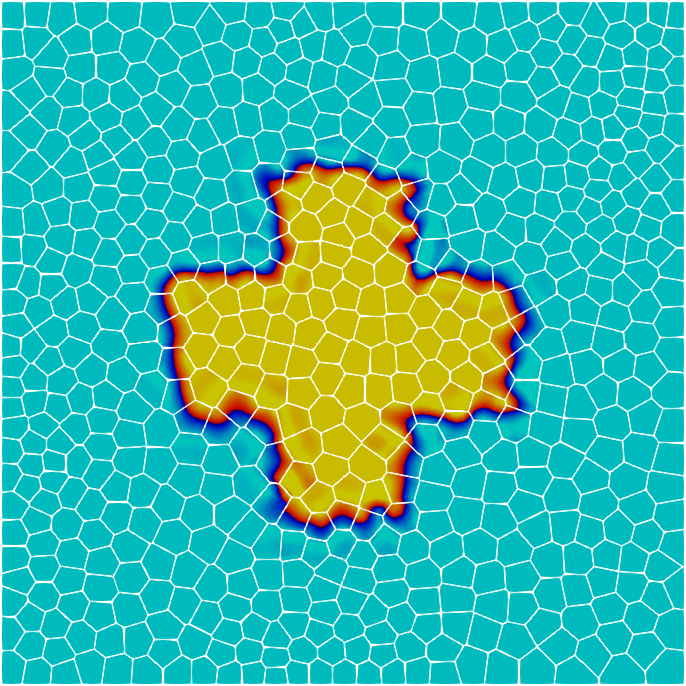}
            \includegraphics[width=0.155\textwidth]{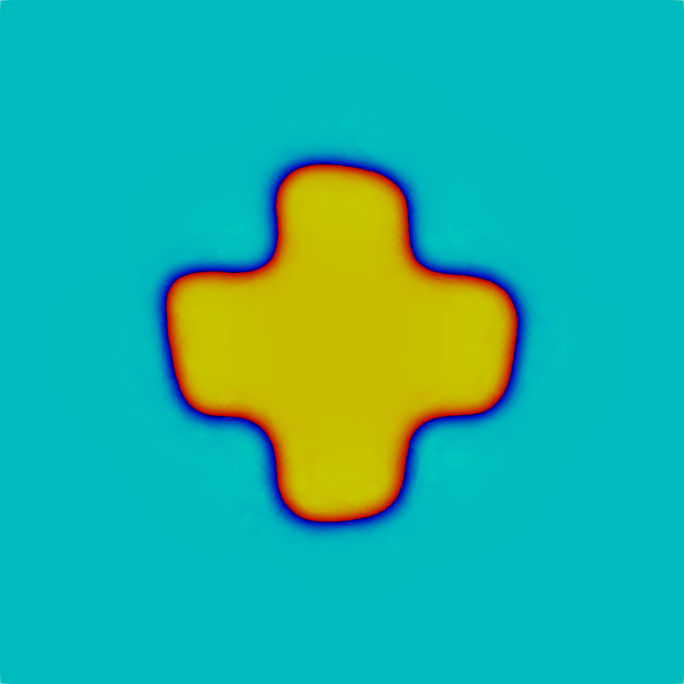}
            \includegraphics[width=0.155\textwidth]{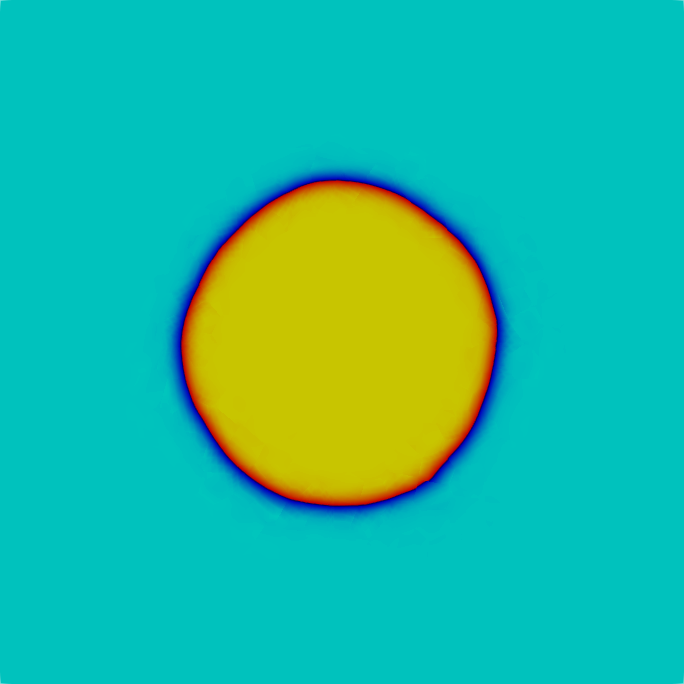}
        }
    \end{center}
    \caption{Test 3: evolution of a cross on the $25 \times 25$ grids displayed at three different time frames from left to right $(t=0,0.004,0.8)$ with $\eps=1/100$. 
    }
    \label{fig: evolutionScreenshots criss}
\end{figure}        

\begin{figure}[!ht]
    \begin{center}
        \subfloat[Results on the criss grid with $\eps=1/25$.]{
            \label{fig: evolution simplex gamma=1/25}
            \includegraphics[width=0.155\textwidth]{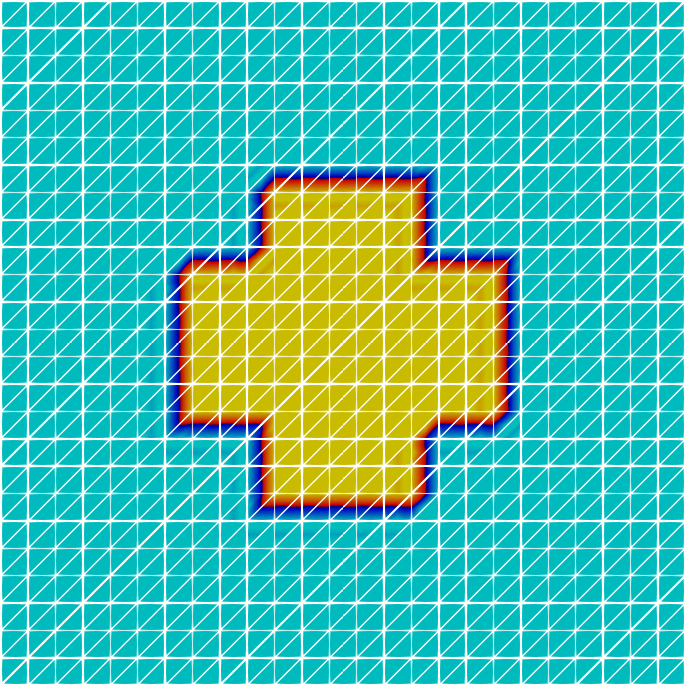}
            \includegraphics[width=0.155\textwidth]{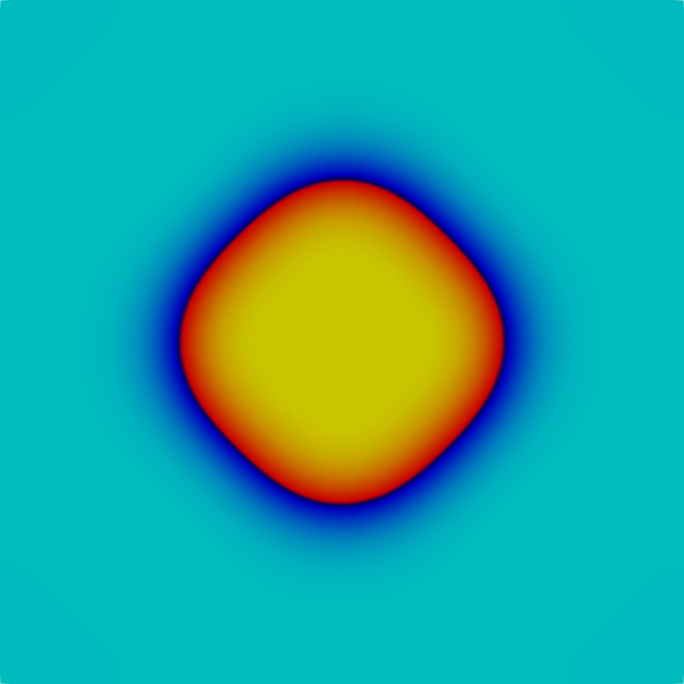}
            \includegraphics[width=0.155\textwidth]{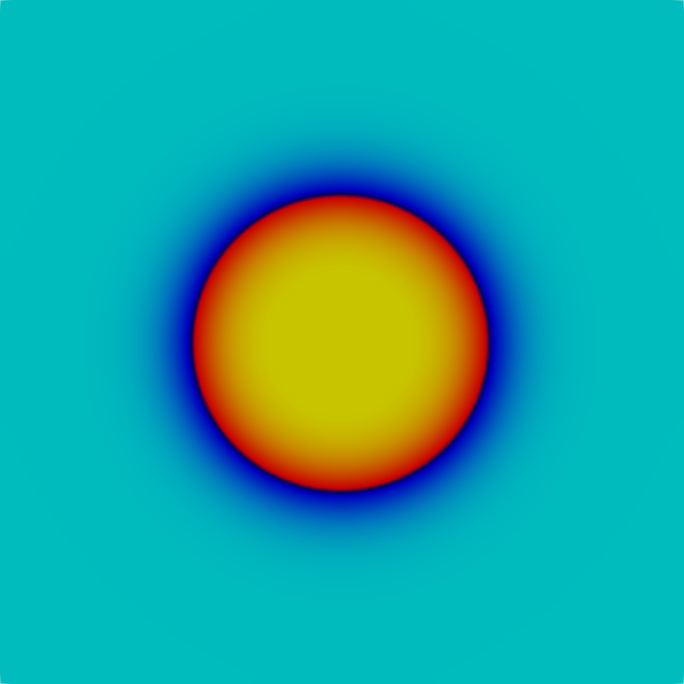}
        }
        \subfloat[Results on the Voronoi grid with $\eps=1/25$.]{
            \label{fig: evolution voronoi gamma=1/25}
            \includegraphics[width=0.155\textwidth]{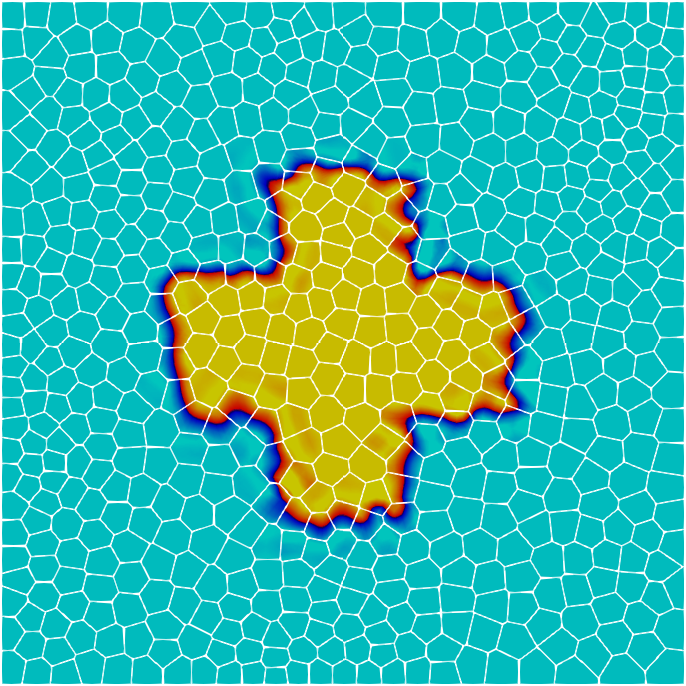}
            \includegraphics[width=0.155\textwidth]{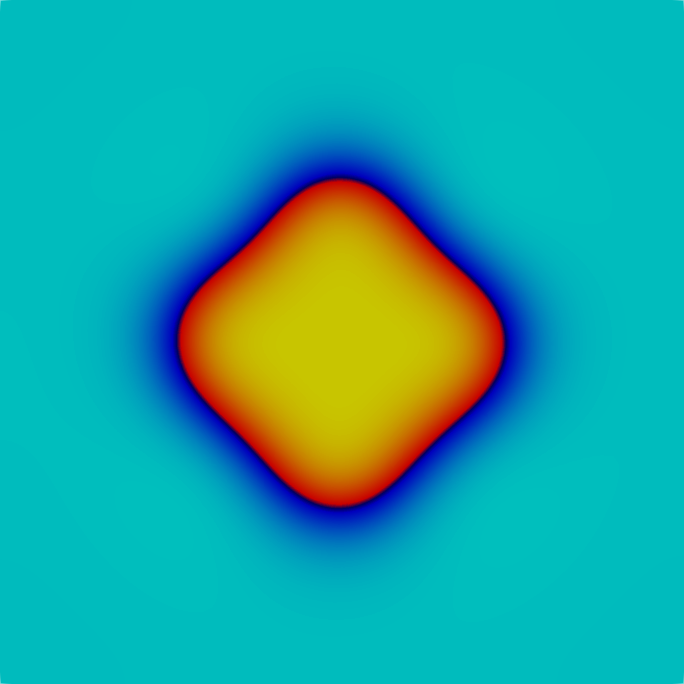}
            \includegraphics[width=0.155\textwidth]{Results/FiguresRevised/chTest_crossevo_SecondOrder1_voronoi_epsFac25_p4_N25_dt10_00199.png}
        }
    \end{center}    
    \caption{
        Test 3: evolution of a cross on the $25 \times 25$ grids displayed at three different time frames from left to right $(t=0,0.004,0.8)$ with $\eps=1/25$. 
        }
    \label{fig: evolutionScreenshots voronoi}
\end{figure}
\begin{figure}[!h]
    \begin{center}  
    \subfloat[Results on the criss grids with $\eps=1/100$. ]{
        \includegraphics[width=0.155\textwidth]{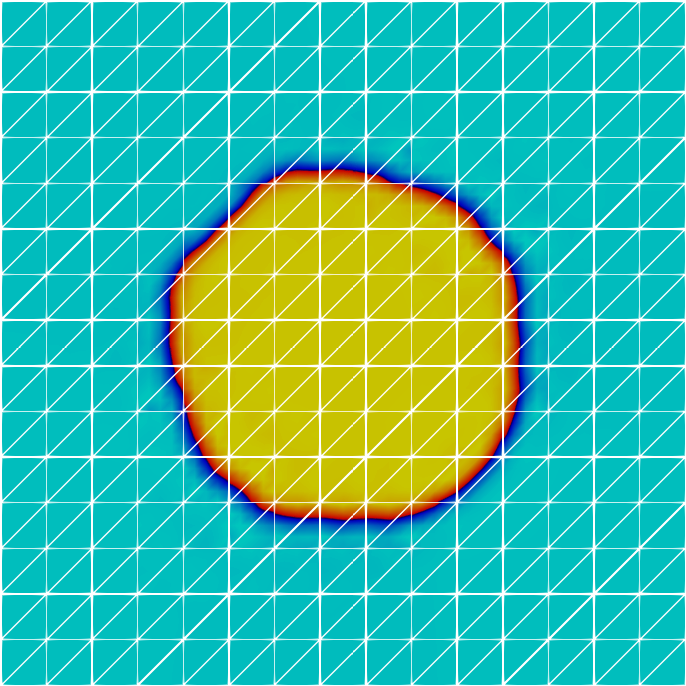}
        \includegraphics[width=0.155\textwidth]{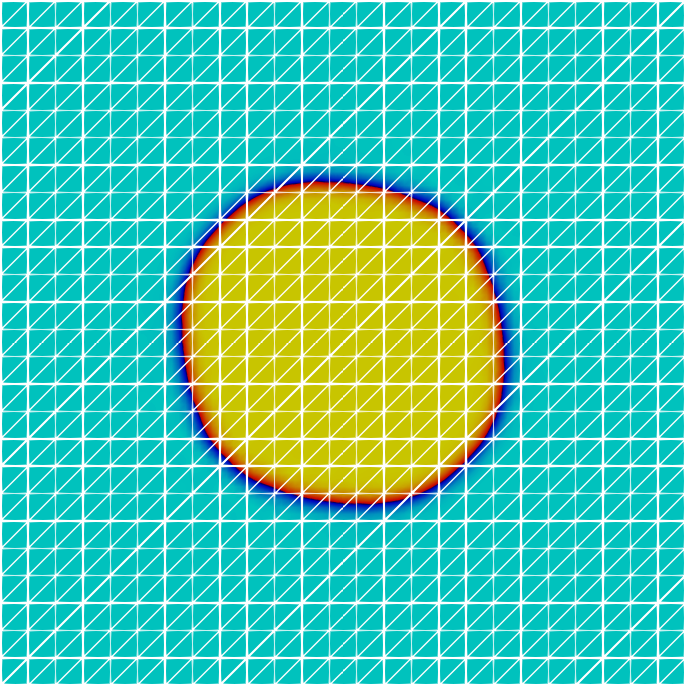}
        \includegraphics[width=0.155\textwidth]{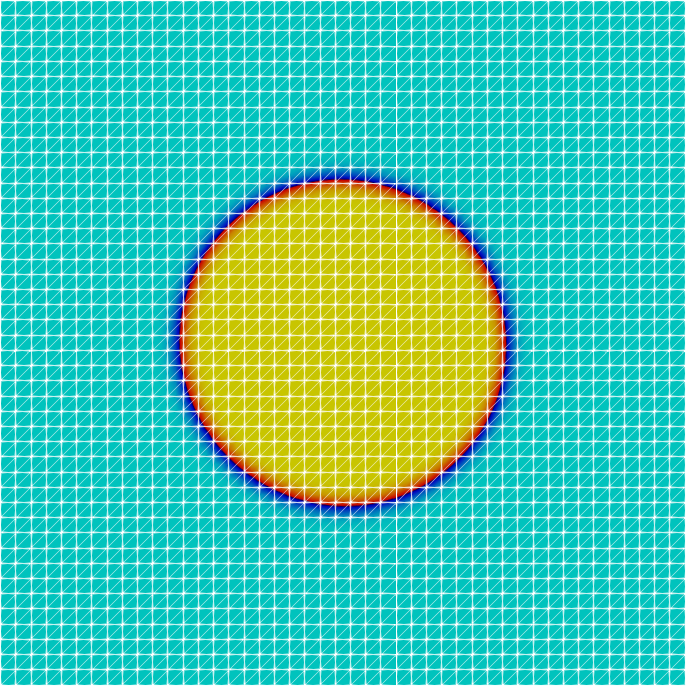}
    }
    \subfloat[Results on the Voronoi grids with $\eps=1/100$.]{
        \includegraphics[width=0.155\textwidth]{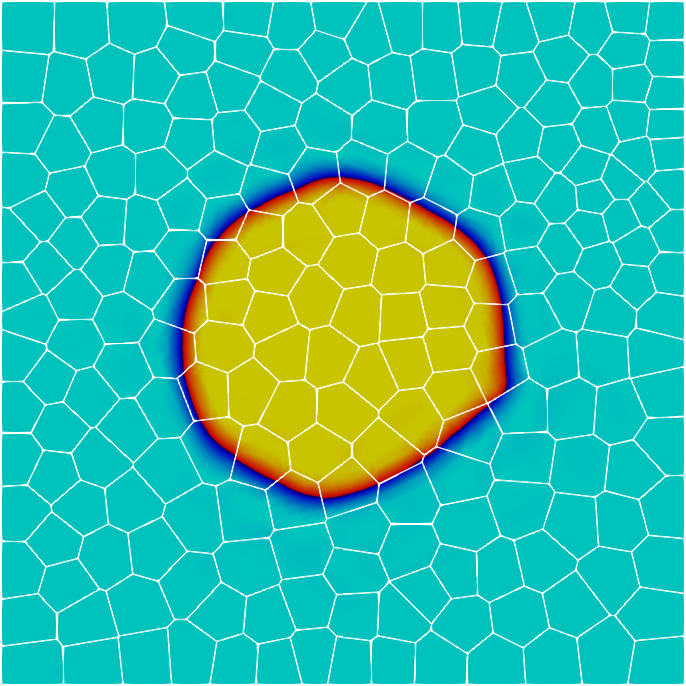}
        \includegraphics[width=0.155\textwidth]{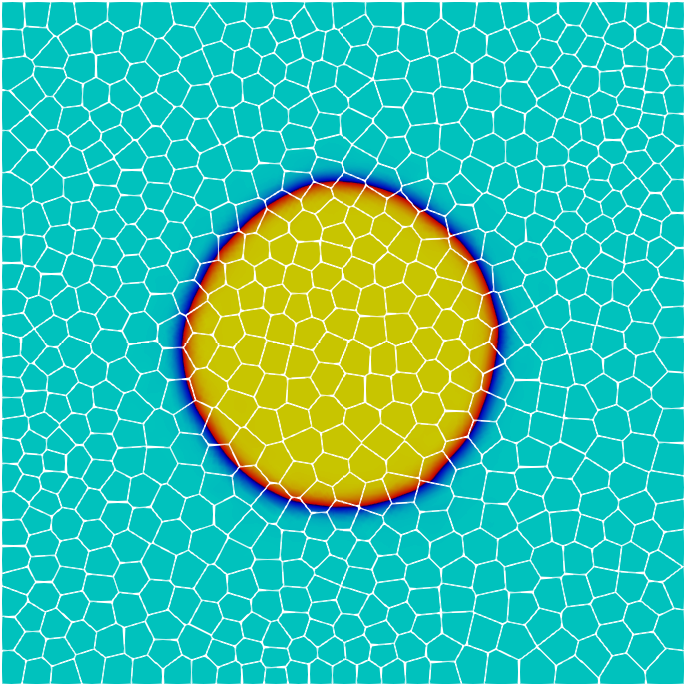}
        \includegraphics[width=0.155\textwidth]{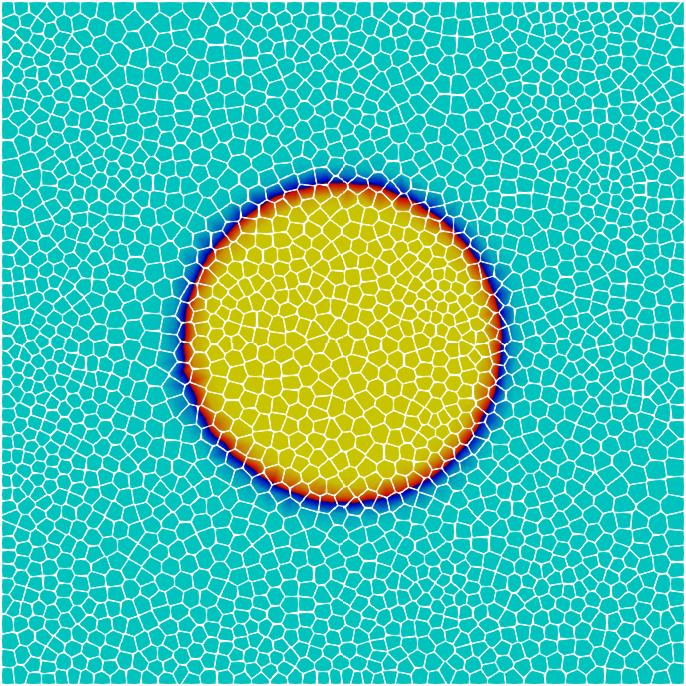}
    }
\end{center}
    \caption{
        Test 3: evolution of a cross displayed at the end time frame $t=0.8$ on the grid sizes from left to right $15 \times 15$, $25 \times 25$, and $45 \times 45$ with $\eps=1/100$.
    }
    \label{fig: end times for Test 2 screenshots simplex}
\end{figure}
\begin{figure}[!h]
    \begin{center}
    \subfloat[Results on the criss grids with $\eps=1/25$.]{
        \includegraphics[width=0.155\textwidth]{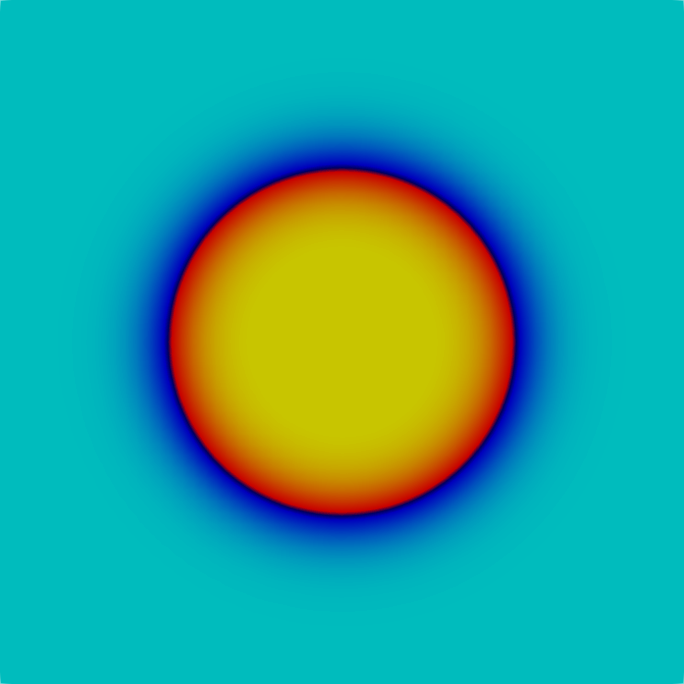}
        \includegraphics[width=0.155\textwidth]{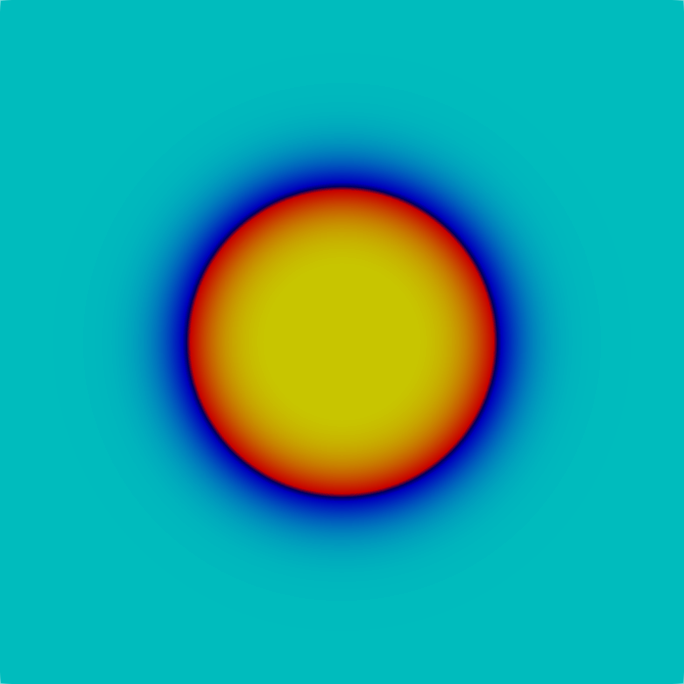}
        \includegraphics[width=0.155\textwidth]{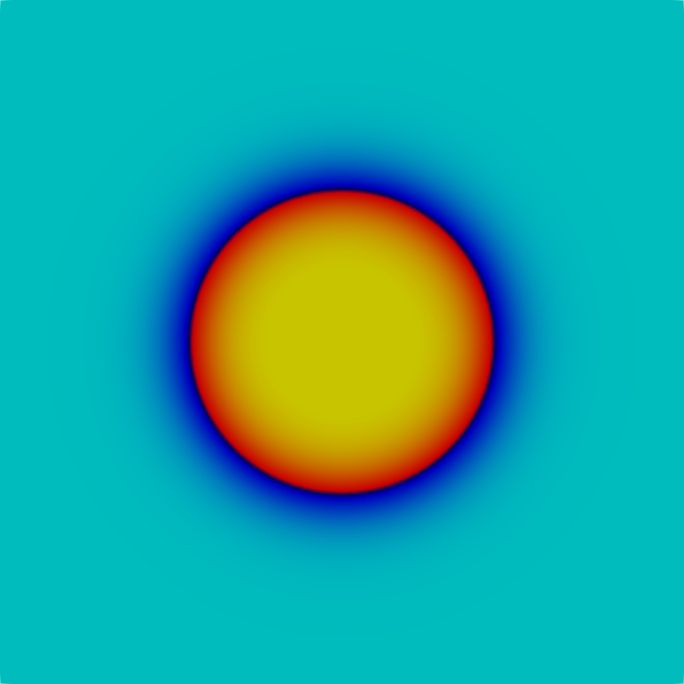}
    }
    \subfloat[Results on the Voronoi grids with $\eps=1/25$.]{
        \includegraphics[width=0.155\textwidth]{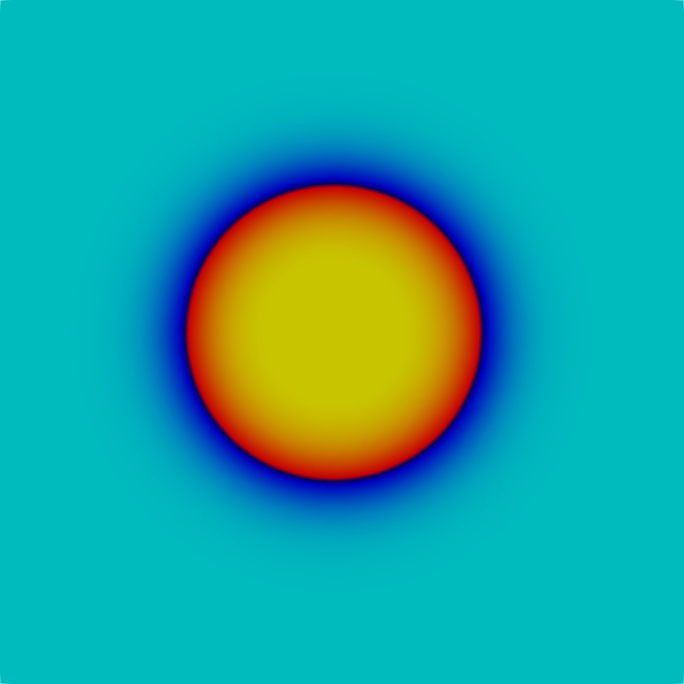}
        \includegraphics[width=0.155\textwidth]{Results/FiguresRevised/chTest_crossevo_SecondOrder1_voronoi_epsFac25_p4_N25_dt10_00199.png}
        \includegraphics[width=0.155\textwidth]{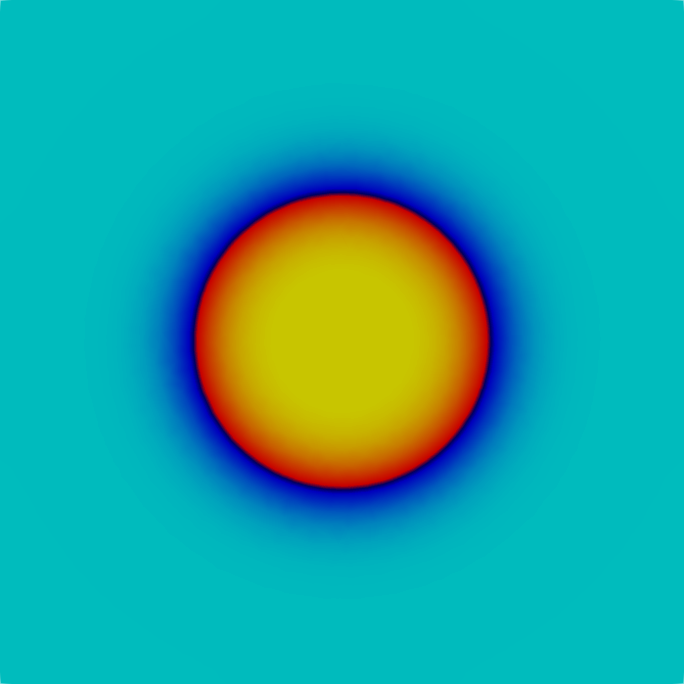}
    }
    \end{center}
    \caption{
        Test 3: evolution of a cross displayed at the end time frame $t=0.8$ on the grid sizes from left to right $15 \times 15$, $25 \times 25$, and $45 \times 45$ with $\eps=1/25$.
        \corrections{The grids used can be seen in Figure~\ref{fig: end times for Test 2 screenshots simplex}.}
    }
    \label{fig: end times for Test 2 screenshots voronoi}
\end{figure}
\pgfplotscreateplotcyclelist{test}{
cyan!60!black,densely dashed,every mark/.append style={fill=cyan!80!black},mark=otimes*\\
lime!80!black,densely dashed,every mark/.append style={fill=lime},mark=diamond*\\
red,densely dashed,every mark/.append style={solid,fill=red!80!black},mark=star\\
yellow!60!black,densely dashed,
every mark/.append style={solid,fill=yellow!80!black},mark=square*\\
black,every mark/.append style={solid,fill=gray},mark=otimes*\\
blue,densely dashed,mark=star,every mark/.append style=solid\\
red,densely dashed,every mark/.append style={solid,fill=red!80!black},mark=diamond*\\
}
\begin{figure}[p]
  \begin{center}    
  \subfloat[Energy plots for $\eps=1/25$ on the structured simplex criss mesh (left) and Voronoi polygonal mesh (right).]{
    \begin{tikzpicture}
        \begin{axis}[
            width=0.45\textwidth, 
            grid=major, 
            grid style={dashed,gray!20}, 
            xlabel=time $t$, 
            ylabel=energy $E(u_h)$,
            legend style={nodes={scale=0.5, transform shape}},
            mark repeat=100,
            mark phase=0,
            mark size=1.5,
            cycle list name=test,
            label style={font=\footnotesize},
          ]
          \addplot+[]
          table[x=column 1,y=column 2,col sep=comma] {Results/Data/chTest_crossevo_SecondOrder1_criss_epsFac25_p4_N15_dt10_.csv}; 
          \addplot+[]
          table[x=column 1,y=column 2,col sep=comma] {Results/Data/chTest_crossevo_SecondOrder1_criss_epsFac25_p4_N25_dt10_.csv}; 
          \addplot+[]
          table[x=column 1,y=column 2,col sep=comma] {Results/Data/chTest_crossevo_SecondOrder1_criss_epsFac25_p4_N45_dt10_.csv}; 
          \legend{($\polOrder=4$, $N=15$),($\polOrder=4$, $N=25$),($\polOrder=4$, $N=45$)}
        \end{axis}
      \end{tikzpicture}
      \quad
      \begin{tikzpicture}
        \begin{axis}[
            width=0.45\textwidth, 
            grid=major, 
            grid style={dashed,gray!20}, 
            xlabel=time $t$, 
            ylabel=energy $E(u_h)$,
            legend style={nodes={scale=0.5, transform shape}},
            mark repeat=100,
            mark phase=0,
            mark size=1.5,
            cycle list name=test,
            label style={font=\footnotesize},
          ]
          \addplot+[]
          table[x=column 1,y=column 2,col sep=comma] {Results/Data/chTest_crossevo_SecondOrder1_voronoi_epsFac25_p4_N15_dt10_.csv}; 
          \addplot+[]
          table[x=column 1,y=column 2,col sep=comma] {Results/Data/chTest_crossevo_SecondOrder1_voronoi_epsFac25_p4_N25_dt10_.csv}; 
          \addplot+[]
          table[x=column 1,y=column 2,col sep=comma] {Results/Data/chTest_crossevo_SecondOrder1_voronoi_epsFac25_p4_N45_dt10_.csv}; 
          \legend{($\polOrder=4$, $N=15$),($\polOrder=4$, $N=25$),($\polOrder=4$, $N=45$)}
        \end{axis}
      \end{tikzpicture}}
    \begin{center}
          \subfloat[Energy plots for $\eps=1/50$ on the structured simplex criss mesh (left) and Voronoi polygonal mesh (right).]{
    \begin{tikzpicture}
      \begin{axis}[
          width=0.45\textwidth, 
          grid=major, 
          grid style={dashed,gray!20}, 
          xlabel=time $t$, 
          ylabel=energy $E(u_h)$,
          legend style={nodes={scale=0.5, transform shape}},
          mark repeat=100,
          mark phase=0,
          mark size=1.5,
          cycle list name=test,
          label style={font=\footnotesize},
        ]
        \addplot+[]
        table[x=column 1,y=column 2,col sep=comma] {Results/Data/chTest_crossevo_SecondOrder1_criss_epsFac50_p4_N15_dt10_.csv}; 
        \addplot+[]
        table[x=column 1,y=column 2,col sep=comma] {Results/Data/chTest_crossevo_SecondOrder1_criss_epsFac50_p4_N25_dt10_.csv}; 
        \addplot+[]
        table[x=column 1,y=column 2,col sep=comma] {Results/Data/chTest_crossevo_SecondOrder1_criss_epsFac50_p4_N45_dt10_.csv}; 
        \legend{($\polOrder=4$, $N=15$),($\polOrder=4$, $N=25$),($\polOrder=4$, $N=45$)}
      \end{axis}
    \end{tikzpicture}
    \quad
    \begin{tikzpicture}
      \begin{axis}[
          width=0.45\textwidth, 
          grid=major, 
          grid style={dashed,gray!20}, 
          xlabel=time $t$, 
          ylabel=energy $E(u_h)$,
          legend style={nodes={scale=0.5, transform shape}},
          mark repeat=100,
          mark phase=0,
          mark size=1.5,
          cycle list name=test,
          label style={font=\footnotesize},
        ]
        \addplot+[]
        table[x=column 1,y=column 2,col sep=comma] {Results/Data/chTest_crossevo_SecondOrder1_voronoi_epsFac50_p4_N15_dt10_.csv}; 
        \addplot+[]
        table[x=column 1,y=column 2,col sep=comma] {Results/Data/chTest_crossevo_SecondOrder1_voronoi_epsFac50_p4_N25_dt10_.csv}; 
        \addplot+[]
        table[x=column 1,y=column 2,col sep=comma] {Results/Data/chTest_crossevo_SecondOrder1_voronoi_epsFac50_p4_N45_dt10_.csv}; 
        \legend{($\polOrder=4$, $N=15$),($\polOrder=4$, $N=25$),($\polOrder=4$, $N=45$)}
      \end{axis}
    \end{tikzpicture}}
  \end{center}
    \subfloat[Energy plots for $\eps=1/100$ on the structured simplex criss mesh (left) and Voronoi polygonal mesh (right).]{\label{fig: energy plots eps=1/100}
    \begin{tikzpicture}
      \begin{axis}[
          width=0.45\textwidth, 
          grid=major, 
          grid style={dashed,gray!20}, 
          xlabel=time $t$, 
          ylabel=energy $E(u_h)$,
          legend style={nodes={scale=0.5, transform shape}},
          mark repeat=100,
          mark phase=0,
          mark size=1.5,
          cycle list name=test,
          label style={font=\footnotesize},
        ]
        \addplot+[]
        table[x=column 1,y=column 2,col sep=comma] {Results/Data/chTest_crossevo_SecondOrder1_criss_epsFac100_p4_N15_dt10_.csv}; 
        \addplot+[]
        table[x=column 1,y=column 2,col sep=comma] {Results/Data/chTest_crossevo_SecondOrder1_criss_epsFac100_p4_N25_dt10_.csv}; 
        \addplot+[]
        table[x=column 1,y=column 2,col sep=comma] {Results/Data/chTest_crossevo_SecondOrder1_criss_epsFac100_p4_N45_dt10_.csv}; 
        \legend{($\polOrder=4$, $N=15$),($\polOrder=4$, $N=25$),($\polOrder=4$, $N=45$)}
      \end{axis}
    \end{tikzpicture}
    \quad
    \begin{tikzpicture}
      \begin{axis}[
          width=0.45\textwidth, 
          grid=major, 
          grid style={dashed,gray!20}, 
          xlabel=time $t$, 
          ylabel=energy $E(u_h)$,
          legend style={nodes={scale=0.5, transform shape}},
          mark repeat=100,
          mark phase=0,
          mark size=1.5,
          cycle list name=test,
          label style={font=\footnotesize},
        ]
        \addplot+[]
        table[x=column 1,y=column 2,col sep=comma] {Results/Data/chTest_crossevo_SecondOrder1_voronoi_epsFac100_p4_N15_dt10_.csv}; 
        \addplot+[]
        table[x=column 1,y=column 2,col sep=comma] {Results/Data/chTest_crossevo_SecondOrder1_voronoi_epsFac100_p4_N25_dt10_.csv}; 
        \addplot+[]
        table[x=column 1,y=column 2,col sep=comma] {Results/Data/chTest_crossevo_SecondOrder1_voronoi_epsFac100_p4_N45_dt10_.csv}; 
        \legend{($\polOrder=4$, $N=15$),($\polOrder=4$, $N=25$),($\polOrder=4$, $N=45$)}
      \end{axis}
    \end{tikzpicture}}
  \end{center}
\caption{Test 3: energy decay plots (energy $E(u_h)$ vs time $t$) for the cross evolution problem.}
\label{fig: energy plots}
\end{figure} 
\FloatBarrier
\begin{figure}[!ht]
    \begin{center}
        \subfloat[Structured simplex mesh consisting of $2048$ elements ($18817$ dof).]{\label{fig: test 3 on criss grid}
        \includegraphics[width=0.2\textwidth]{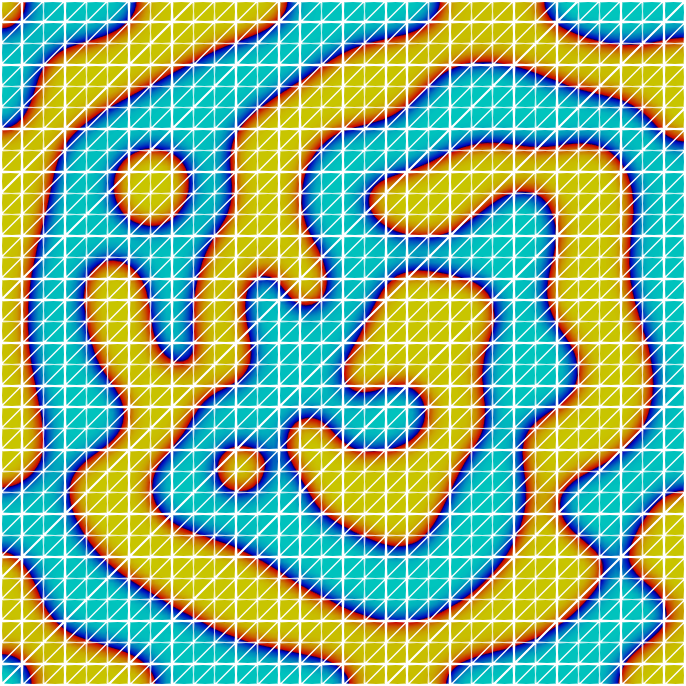}
        \,
        \includegraphics[width=0.2\textwidth]{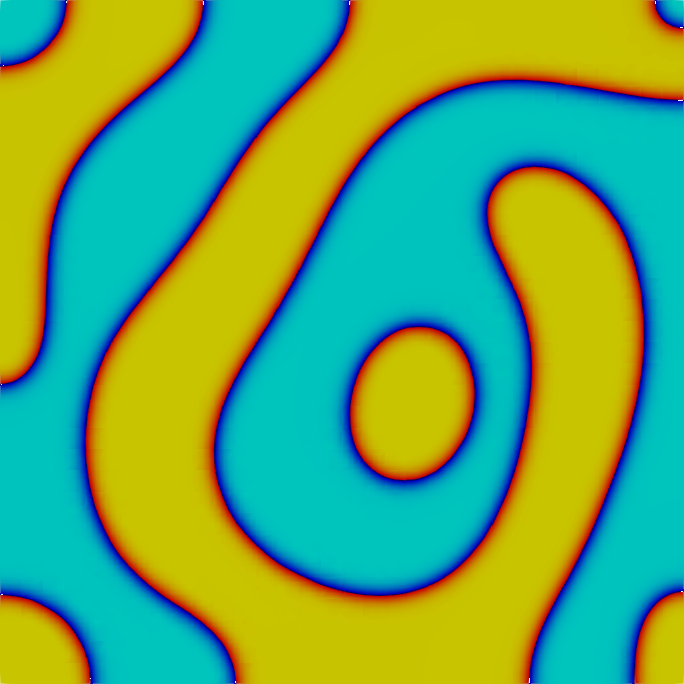}
        \,
        \includegraphics[width=0.2\textwidth]{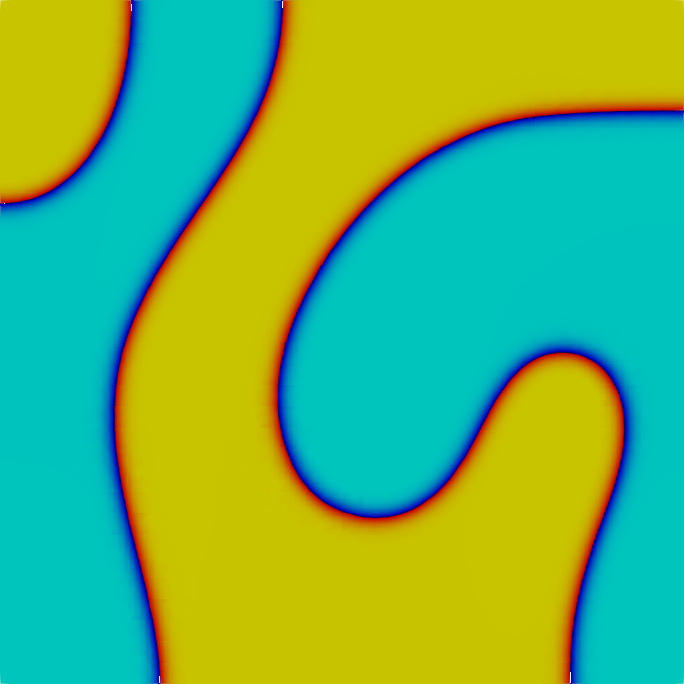}
        \,
        \includegraphics[width=0.2\textwidth]{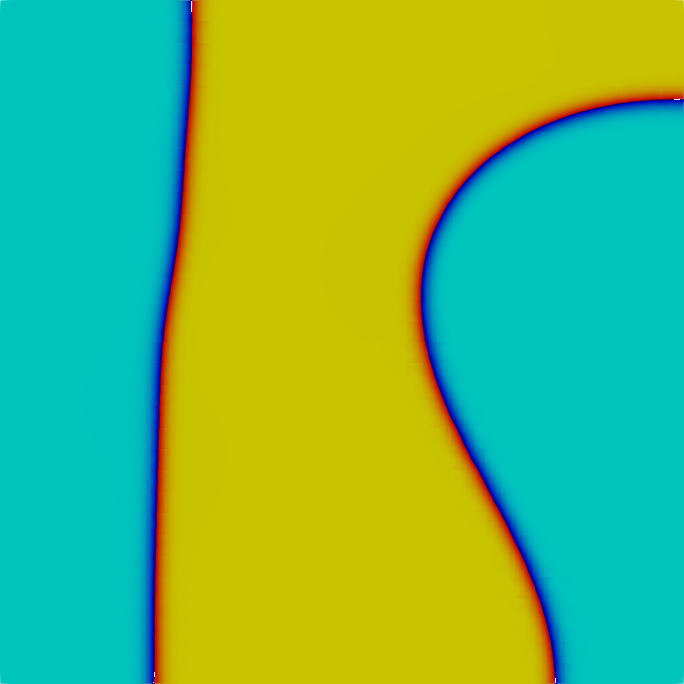}}
    \end{center}
    \begin{center}
        \subfloat[Voronoi polygonal mesh consisting of $1024$ elements ($18439$ dof).]{\label{fig: test 3 on voronoi}
        \includegraphics[width=0.2\textwidth]{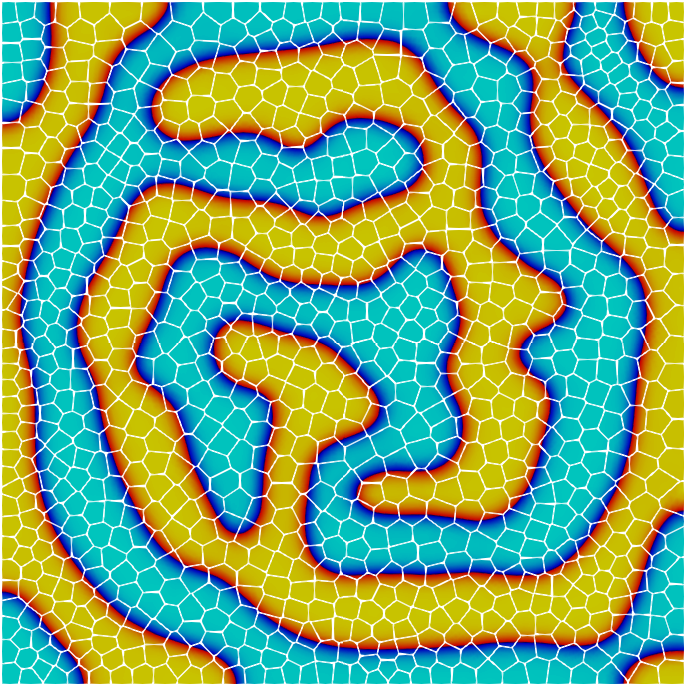}
        \,
        \includegraphics[width=0.2\textwidth]{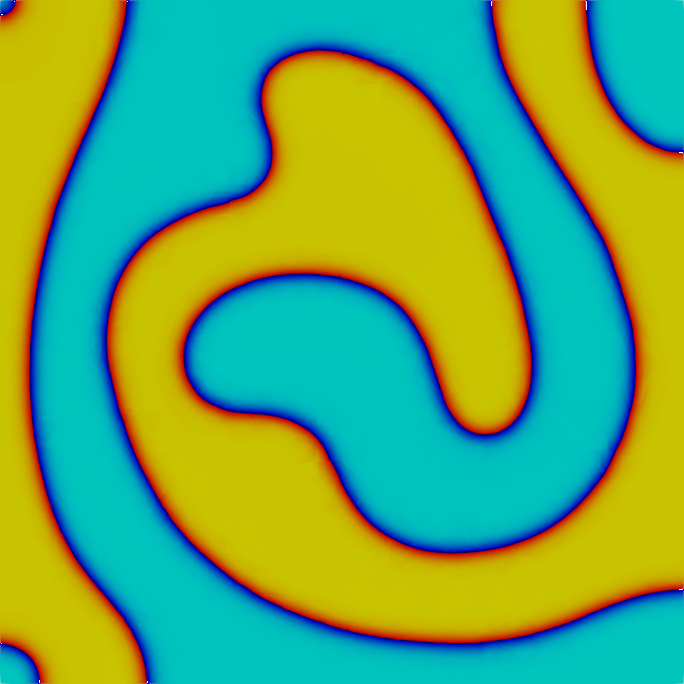}
        \,
        \includegraphics[width=0.2\textwidth]{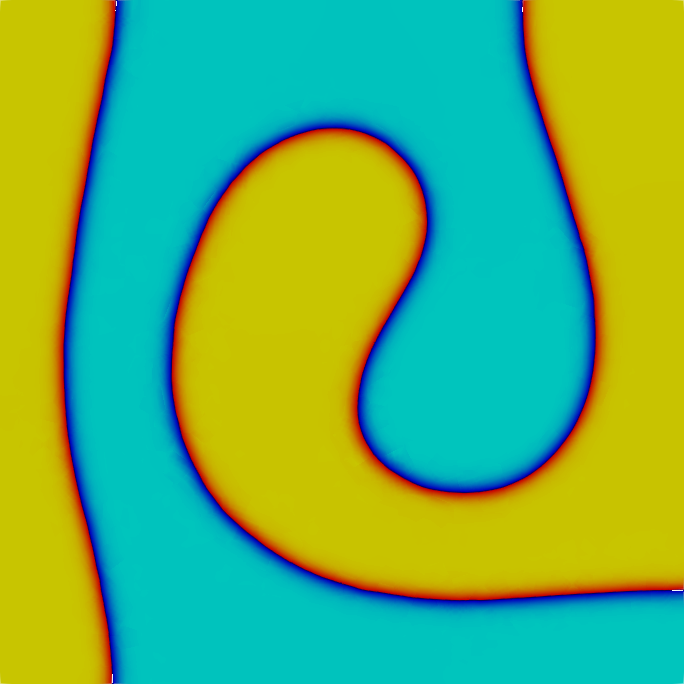}
        \,
        \includegraphics[width=0.2\textwidth]{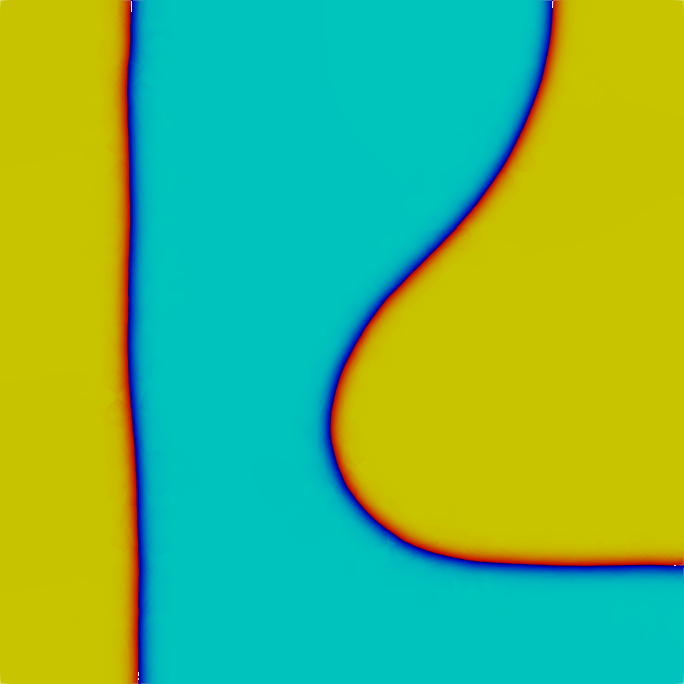}}
    \end{center}
\caption{Test 4: spinodal decomposition with order $\polOrder=4$ displayed at four time frames from left to right $(t=0.04,0.4,1.6,5).$ 
}
\label{fig: Test 3 results for p=4 on both grids}
\end{figure}

\FloatBarrier

\subsection{Comparison with conforming VEM}
\jsccorrections{In this section we briefly investigate numerically the behaviour of the $C^1$-conforming virtual element method, see e.g. \cite{antonietti_conforming_2018,brezzi_virtual_2013}.
A full set of numerical experiments for the lowest order $C^1$ method can be found in \cite{antonietti_$c^1$_2016}.
However, note that the treatment of the nonlinearity in \cite{antonietti_$c^1$_2016} differs from the approach taken in this paper.
Whilst the $C^1$ VEM has the advantage of being conforming in $H^2$, the implementation of the nonconforming VEM presented in this paper is simpler.
For example, the treatment of boundary conditions is more complex for the conforming space due to the tangential derivative degrees of freedom at the vertices.
The lowest order $C^1$ VEM ($\polOrder=2$) has only 3 degrees of freedom per vertex so on structured triangular grids (such as the ``criss'' grid considered in this section) it will generally have fewer dofs than the nonconforming space; however, this is not the case on other polygonal grids such as either structured quadrilateral grids or Voronoi grids.
Additionally, due to only having 9 dofs on triangles, extra care is required when implementing the value projection for the $C^1$-conforming VEM.
Further discussion on this as well as implementation details for both spaces can be found in \cite{dedner2022framework}. 
}

\jsccorrections{
    In this work, we have focused on developing higher order extensions of Morley which is often the only space available in finite element packages for solving fourth-order problems.
    Consequently, the tools and infrastructure necessary to implement the higher order nonconforming VEMs in this paper is likely to be more readily available.
    Importantly, for all grid resolutions considered in the tests so far, the advantage of using the higher order method is apparent.
    On top of this, we see in the subsequent comparison that both the higher order conforming and nonconforming VEM produce identical results.}

\jsccorrections{
We now turn our attention to repeating some of the tests already presented but with the $C^1$-conforming VEM.
Firstly, we repeat the non-physical test, Test 1 from section~\ref{subsec: test 1}.
We select the exact same parameters as in section~\ref{subsec: test 1} and the results on the structured simplex ``criss'' grid with the CSRK-2 \eqref{eqn: IMEX2 Butcher tables} time stepping method are shown in Figure~\ref{fig: test1 plots} for polynomial orders $\polOrder=2,3,4$.
We plot the $H^2$-error in the left figure for both the $C^1$-conforming and the nonconforming spaces and the order of convergence is shown in the right figure.
Since we use a second order time stepping method, the eocs for $\polOrder=3,4$ are the same.
For the lowest order spaces ($\polOrder=2$), we see that the $C^1$-conforming VEM outperforms the nonconforming space however, the results for $\polOrder=3,4$ are indistinguishable.
We point out that the same is true when looking at grid size $h$ vs error as well as dofs vs error.}

\jsccorrections{
Secondly, we investigate Test 2 with the $C^1$-conforming space. 
We use the exact same parameters as detailed in section~\ref{subsec: test 2} and the results are shown in Figure~\ref{fig: zero level sets C1conf version}.
As for the nonconforming space, the initial ellipses evolve and coalesce over time to one ellipse.
Despite the lowest order $C^1$-conforming VEM producing better errors for the non-physical test of Test 1, we point out that the results from this test are essentially identical to those shown in Figure~\ref{fig: zero level sets}.}

\pgfplotscreateplotcyclelist{test1}{
orange,densely dashed,every mark/.append style={solid,fill=orange},mark=*\\
blue,densely dashed,every mark/.append style={solid,fill=blue},mark=*\\
red,densely dashed,every mark/.append style={solid,fill=red},mark=*\\
orange,solid,mark=star,every mark/.append style=solid\\
blue,solid,mark=star,every mark/.append style=solid\\
red,solid,mark=star,every mark/.append style=solid\\
}
\pgfplotscreateplotcyclelist{test2}{
orange,densely dashed,every mark/.append style={solid,fill=orange},mark=*\\
orange,solid,mark=star,every mark/.append style=solid\\
blue,densely dashed,every mark/.append style={solid,fill=blue},mark=*\\
blue,solid,mark=star,every mark/.append style=solid\\
red,densely dashed,every mark/.append style={solid,fill=red},mark=*\\
red,solid,mark=star,every mark/.append style=solid\\
}
\begin{figure}[h!]
  \begin{center}    
    \begin{tikzpicture}
        \begin{loglogaxis}[
            width=0.45\linewidth,
            grid=major, 
            grid style={dashed,gray!20}, 
            xlabel=dofs, 
            ylabel=$H^2$-error,
            legend columns=2,
            legend style={/tikz/column 2/.style={
                column sep=5pt,
            },at={(0.5,1.05)},anchor=south,nodes={scale=0.75}},
            mark size=1.5,
            cycle list name=test2,
            label style={font=\footnotesize},
          ]
          \addplot+[]
          table[x=dofs,y=h2err_max,col sep=comma] {Results/C1conf/results1/ch_convergencetemporal1_SecondOrder1_criss_conf_p2_16.csv};
          \addplot+[]
          table[x=dofs,y=h2err_max,col sep=comma] {Results/C1conf/results1/ch_convergencetemporal1_SecondOrder1_criss_nconf_p2_16.csv};
          \addplot+[]
          table[x=dofs,y=h2err_max,col sep=comma] {Results/C1conf/results1/ch_convergencetemporal1_SecondOrder1_criss_conf_p3_16.csv}; 
          \addplot+[]
          table[x=dofs,y=h2err_max,col sep=comma] {Results/C1conf/results1/ch_convergencetemporal1_SecondOrder1_criss_nconf_p3_16.csv}; 
          \addplot+[]
          table[x=dofs,y=h2err_max,col sep=comma] {Results/C1conf/results1/ch_convergencetemporal1_SecondOrder1_criss_conf_p4_16.csv};  
          \addplot+[]
          table[x=dofs,y=h2err_max,col sep=comma] {Results/C1conf/results1/ch_convergencetemporal1_SecondOrder1_criss_nconf_p4_16.csv};  
          \legend{C1-c: $\polOrder=2$,C1-nc: $\polOrder=2$,C1-c: $\polOrder=3$,C1-nc: $\polOrder=3$,C1-c: $\polOrder=4$,C1-nc: $\polOrder=4$}
        \end{loglogaxis}
      \end{tikzpicture}
      \quad
      \begin{tikzpicture}
        \begin{semilogxaxis}[
            width=0.45\linewidth, 
            grid=major, 
            grid style={dashed,gray!20}, 
            ylabel=$H^2$-eoc, 
            xmin=200,
            ymin=0,
            ymax=3,
            xlabel=dofs,
            legend columns=2,
            legend style={/tikz/column 2/.style={
                column sep=5pt,
            },at={(0.5,1.05)},anchor=south,nodes={scale=0.75}},
            mark size=1.5,
            cycle list name=test2,
            label style={font=\footnotesize},
          ]
          \addplot+[]
          table[x=dofs,y=h2eoc,col sep=comma] {Results/C1conf/results1/ch_convergencetemporal1_SecondOrder1_criss_conf_p2_16.csv};
          \addplot+[]
          table[x=dofs,y=h2eoc,col sep=comma] {Results/C1conf/results1/ch_convergencetemporal1_SecondOrder1_criss_nconf_p2_16.csv};
          \addplot+[]
          table[x=dofs,y=h2eoc,col sep=comma] {Results/C1conf/results1/ch_convergencetemporal1_SecondOrder1_criss_conf_p3_16.csv}; 
          \addplot+[]
          table[x=dofs,y=h2eoc,col sep=comma] {Results/C1conf/results1/ch_convergencetemporal1_SecondOrder1_criss_nconf_p3_16.csv}; 
          \addplot+[]
          table[x=dofs,y=h2eoc,col sep=comma] {Results/C1conf/results1/ch_convergencetemporal1_SecondOrder1_criss_conf_p4_16.csv};  
          \addplot+[]
          table[x=dofs,y=h2eoc,col sep=comma] {Results/C1conf/results1/ch_convergencetemporal1_SecondOrder1_criss_nconf_p4_16.csv};  
          \legend{C1-c: $\polOrder=2$,C1-nc: $\polOrder=2$,C1-c: $\polOrder=3$,C1-nc: $\polOrder=3$,C1-c: $\polOrder=4$,C1-nc: $\polOrder=4$}
        \end{semilogxaxis}
      \end{tikzpicture}
    \end{center}
\caption{\jsccorrections{Test 1 (section~\ref{subsec: test 1}) results for both the $C^1$-conforming (C1-c) and nonconforming (C1-nc) VEM spaces. We show the $H^2$-error (left) for polynomial orders $\polOrder=2,3,4$ and the convergence rates (right) for these errors on a structured simplex grid with the CSRK-2 time stepping method.
Note that the $\polOrder=4$ results start on a coarser grid level.}}
\label{fig: test1 plots}
\end{figure} 
\ifthenelse{\boolean{arxiv}}
{
    \begin{figure}
        \begin{center}        
        \subfloat[$t=0$]{
            \includegraphics[trim={2cm 2cm 2cm 2cm},clip,width=0.275\textwidth]{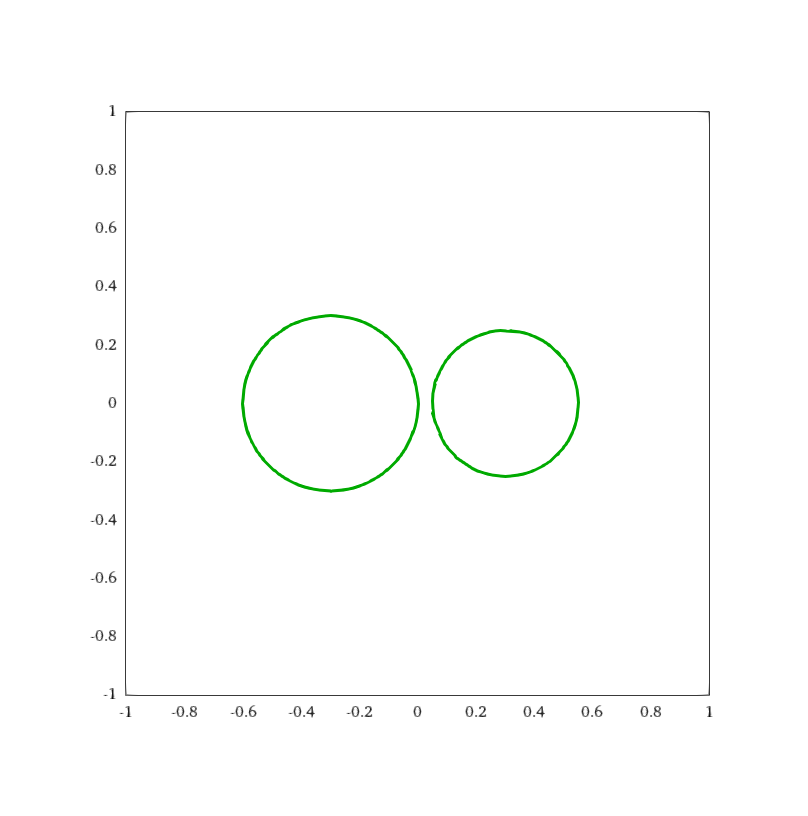}
        }
        \subfloat[$t=0.004$]{
            \includegraphics[trim={2cm 2cm 2cm 2cm},clip,width=0.275\textwidth]{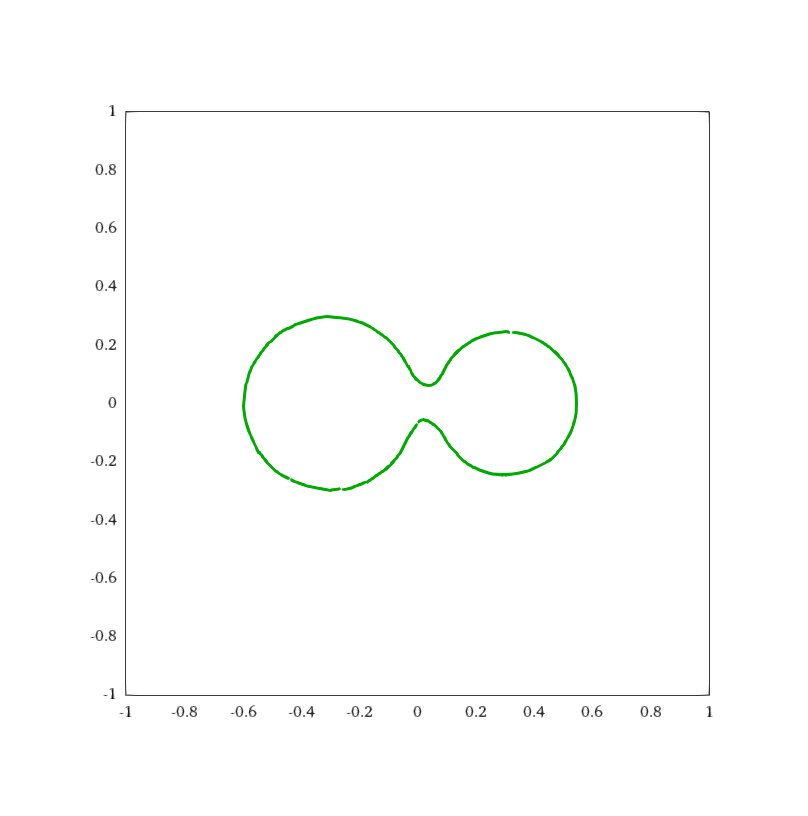}
        }
        \subfloat[$t=0.016$]{
            \includegraphics[trim={2cm 2cm 2cm 2cm},clip,width=0.275\textwidth]{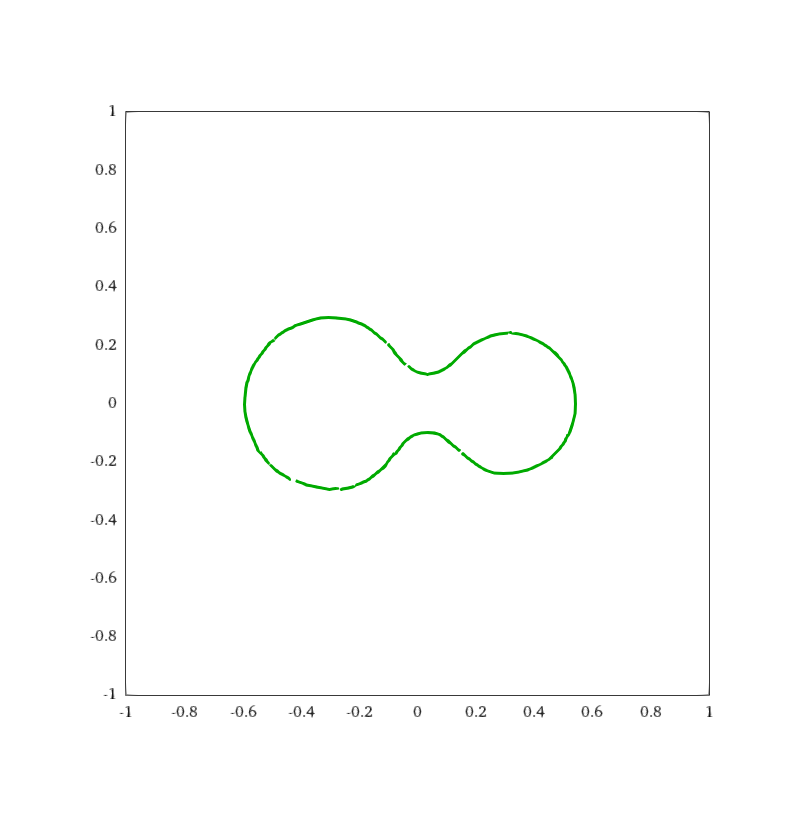}
        }
        \\
        \subfloat[$t=0.048$]{
            \includegraphics[trim={2cm 2cm 2cm 2cm},clip,width=0.275\textwidth]{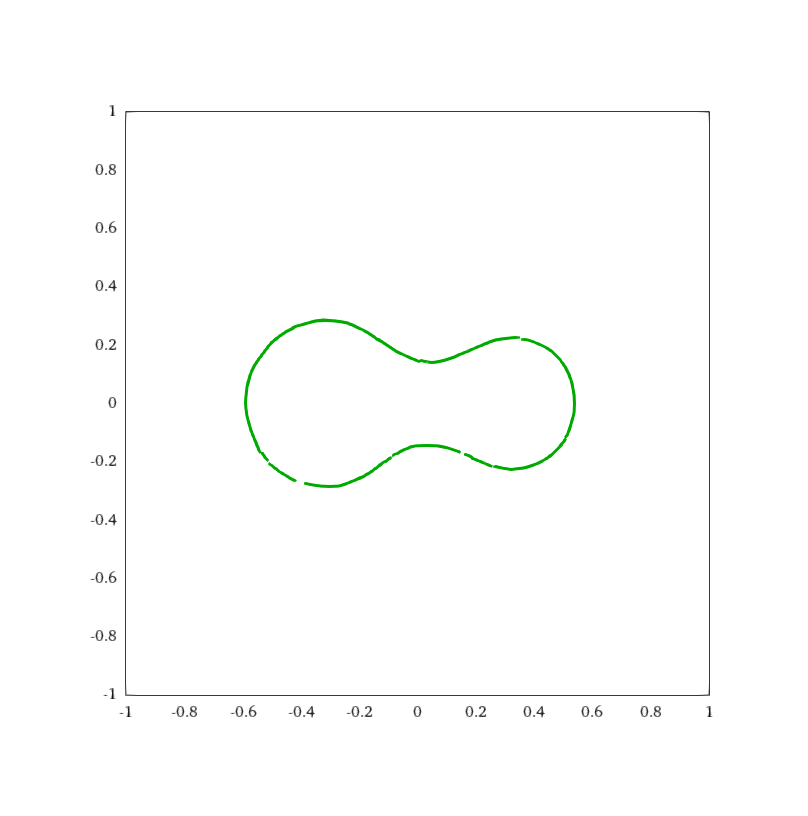}
        }
        \subfloat[$t=0.144$]{
            \includegraphics[trim={2cm 2cm 2cm 2cm},clip,width=0.275\textwidth]{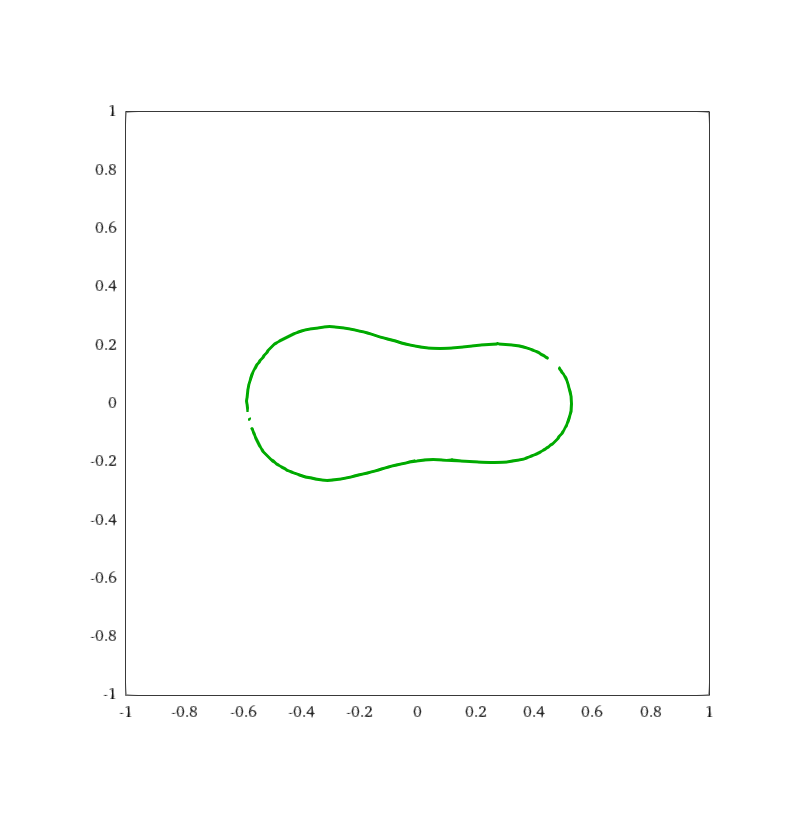}
        }
        \subfloat[$t=0.3$]{
            \includegraphics[trim={2cm 2cm 2cm 2cm},clip,width=0.275\textwidth]{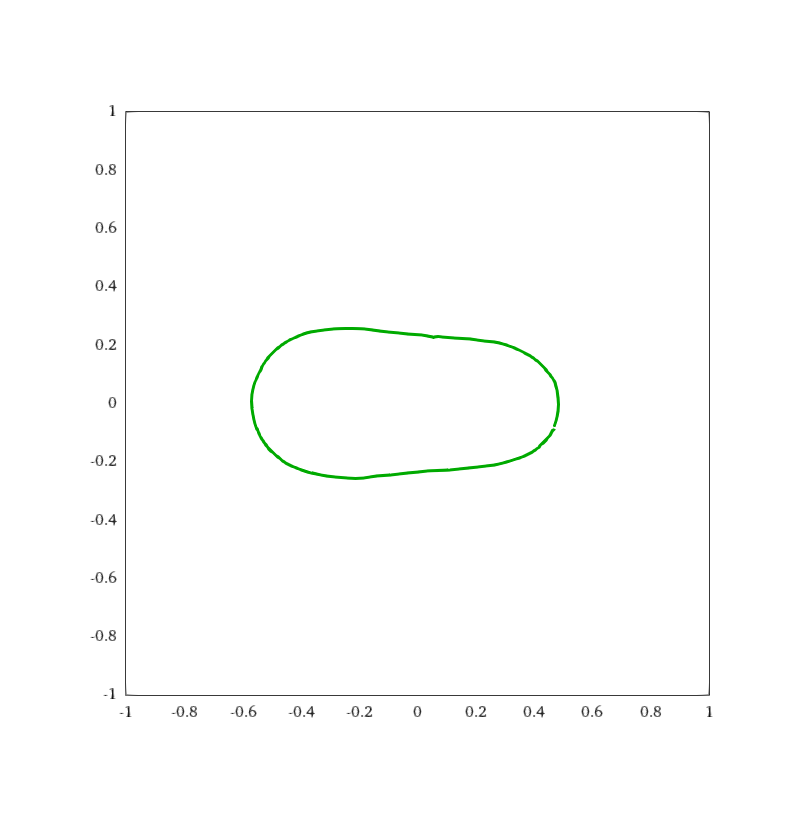}
        }
    \end{center}
        \caption{
            \jsccorrections{
            Test 2 (section~\ref{subsec: test 2}) with the $C^1$-conforming VEM. Screenshots of the zero-level sets at times $t=0,0.004,0.016,0.048,0.144,0.3$ with $\eps=3/100$ are displayed on the $25 \times 25$ Voronoi polygonal grid.}
            }
    \label{fig: zero level sets C1conf version}
    \end{figure}
}
{
    \begin{figure}
        \begin{center}        
        \subfloat[$t=0$]{
            \includegraphics[width=0.275\textwidth]{Results/C1conf/TwoEllipse/C1conf_twoellipse_0.png}
        }
        \subfloat[$t=0.004$]{
            \includegraphics[width=0.275\textwidth]{Results/C1conf/TwoEllipse/C1conf_twoellipse_1.png}
        }
        \subfloat[$t=0.016$]{
            \includegraphics[width=0.275\textwidth]{Results/C1conf/TwoEllipse/C1conf_twoellipse_4.png}
        }
        \\
        \subfloat[$t=0.048$]{
            \includegraphics[width=0.275\textwidth]{Results/C1conf/TwoEllipse/C1conf_twoellipse_12.png}
        }
        \subfloat[$t=0.144$]{
            \includegraphics[width=0.275\textwidth]{Results/C1conf/TwoEllipse/C1conf_twoellipse_36.png}
        }
        \subfloat[$t=0.3$]{
            \includegraphics[width=0.275\textwidth]{Results/C1conf/TwoEllipse/C1conf_twoellipse_74.png}
        }
    \end{center}
        \caption{
            \jsccorrections{
            Test 2 (section~\ref{subsec: test 2}) with the $C^1$-conforming VEM. Screenshots of the zero-level sets at times $t=0,0.004,0.016,0.048,0.144,0.3$ with $\eps=3/100$ are displayed on the $25 \times 25$ Voronoi polygonal grid.}
            }
    \label{fig: zero level sets C1conf version}
    \end{figure}
}

\FloatBarrier
\section{Conclusion}\label{sec: conclusion}
In this paper we have developed a fully nonconforming virtual element method of arbitrary approximation order for the discretization of the two dimensional Cahn-Hilliard equation. 
We applied the projection approach taken in \cite{10.1093/imanum/drab003} to the fourth-order nonlinear Cahn-Hilliard problem and were able to define the discrete forms directly without using an averaging technique for the nonlinear term, as seen in \cite{antonietti_$c^1$_2016}.
This approach enabled us to prove optimal order error estimates in \corrections{the} $L^2$\corrections{-norm} for the semidiscrete (continuous-in-time) scheme. 
We coupled the VEM spatial discretization with a convex splitting Runge-Kutta (CSRK) method to create a fully discrete scheme and the behaviour was investigated with numerical experiments. 
The theoretical convergence result was verified numerically and standard benchmark tests from the literature were carried out, \jsccorrections{as well as a numerical comparison with a conforming VEM scheme.}
\appendix
\section{Proof of technical results}\label{sec: appendix}
We dedicate this section to the proof of some technical lemmas necessary for the error analysis. Firstly, we give the proof of Lemma \ref{lemma: term 2}.
\begin{proof}[Proof of Lemma \ref{lemma: term 2}]
    We begin by introducing the interpolation $\globalInterpolation z$ of $z$ into our nonconforming VEM space $\vemSpace$,
\begin{align*}
    \left| b(u-\ellipticProj u,z) \right| = \left| b(u-\ellipticProj u,z-\globalInterpolation z) + b(u-\ellipticProj u,\globalInterpolation z) \right|.
\end{align*} 
The first term can be bounded easily using the continuity of the bilinear form $b(\cdot,\cdot)$, \eqref{eqn: elliptic proj in 2norm}, interpolation properties of $\globalInterpolation$ \eqref{eqn: approximation properties of interpolation operator}, and \eqref{eqn: elliptic regularity}. 
Hence, we can show that 
\begin{align*}
    \left| b( u-\ellipticProj u,z - \globalInterpolation z) \right| 
    \leq 
    \| u - \ellipticProj u \|_{2,h} 
    \| z - \globalInterpolation z \|_{2,h} 
    \lesssim 
    h^{\polOrder} \| u - \ellipticProj u \|_{1,h}.
\end{align*}

We rewrite the remaining term \corrections{introducing} $z_h := \globalInterpolation z - z$, and see that 
\begin{align*}
    b(u-\ellipticProj u,\globalInterpolation z) = b(u,z_h) + b(u,z) - b(\ellipticProj u,\globalInterpolation z). 
\end{align*}

We note that the continuous solution $u$ satisfies
\begin{align*}
    b(u,z) = \eps^2 a(u,z) + r(u;u,z) + \alpha (u,z) = (\eps^2 \Delta^2 u - \nabla \cdot(\phi^{\prime}(u)\nabla u )+ \alpha u,z).
\end{align*}
Therefore, using the definition of the elliptic projection in \eqref{eqn: elliptic proj}, it follows that 
\begin{align*}
    b(u,z_h)+ b(u,z) 
    = \ & \eps^2 a(u,z_h) + r(u;u,z_h) + \alpha (u,z_h) 
    + 
    b(u,z)
    \\
    = \ & 
    ( \eps^2 \Delta^2 u - \nabla \cdot (\phi^{\prime}(u) \nabla u ) + \alpha u, z_h + z) 
    +
    \nonconformity(u,z_h)
    \\
    = \ &
    b_h(\ellipticProj u,\globalInterpolation z) + \nonconformity (u, z_h),
\end{align*}
where $\nonconformity (u,z_h)$ is defined in \eqref{eqn: nonconformity defn}. 
Noting that, by definition,
\begin{align*}
    b_h(\ellipticProj u,\globalInterpolation z) = \ &b(\ellipticProj u,\globalInterpolation z)
    + 
    \eps^2 (a_h(\ellipticProj u,\globalInterpolation z) - a(\ellipticProj u,  \globalInterpolation z )  ) 
    + r_h (u;\ellipticProj u,\globalInterpolation z) - r(u;\ellipticProj u,\globalInterpolation z),
\end{align*}
we observe that 
\begin{align*}
    \left| b(u-\ellipticProj u,\globalInterpolation z) \right|
    = \ &
    \left|
    b(u,z_h) + b(u,z) - b(\ellipticProj u,\globalInterpolation z) - b_h(\ellipticProj u,\globalInterpolation z) + b_h(\ellipticProj u,\globalInterpolation z)
    \right|
    \\
    \leq \ &
    \left| \nonconformity (u,z_h) \right|
    +
    \eps^2 \left| a_h(\ellipticProj u,\globalInterpolation z) - a(\ellipticProj u,  \globalInterpolation z )  \right| 
    + \left|  r_h (u;\ellipticProj u,\globalInterpolation z) - r(u;\ellipticProj u,\globalInterpolation z ) \right|
    \\
    = \ &
    T_1 + T_2 + T_3.
\end{align*}

We now look at each term in turn.
For $T_1$ we apply Lemma \ref{lemma: nonconformity error} and \eqref{eqn: elliptic regularity} to see that 
\begin{align}\label{eqn: term T_1}
    T_1 \lesssim h^{\polOrder-1} |z-\globalInterpolation z|_{2,h} \lesssim h^{\polOrder} \| z \|_{3,\Omega} \lesssim h^{\polOrder} \| u-\ellipticProj u\|_{1,h}.
\end{align}

For the second term $T_2$, we introduce the $L^2$ projection of $z$, as well as the $L^2$ projection of $u$, and use polynomial consistency from Lemma \ref{lemma: consistency}. Therefore
    \begin{align}\label{eqn: term T2 bound}
        T_2 &= \eps^2 \left| a_h(\ellipticProj u,\globalInterpolation z) - a(\ellipticProj u,\globalInterpolation z) \right|
        \nonumber
        \\
        &=
        \eps^2 \left| \sum_{K \in \mesh} a_h^K (\globalInterpolation z - \ltwo_{\polOrder} z,\ellipticProj u - \ltwo_{\polOrder} u) 
        - 
        a^K( \globalInterpolation z - \ltwo_{\polOrder} z  , \ellipticProj u - \ltwo_{\polOrder} u )
        \right|
        \nonumber
        \\
        &\lesssim
        \eps^2 | \globalInterpolation z - \ltwo_{\polOrder} z |_{2,h} | \ellipticProj u - \ltwo_{\polOrder} u |_{2,h}
        \lesssim
        h \| z \|_{3,\Omega} h^{\polOrder-1}
        \lesssim h^{\polOrder} \| u - \ellipticProj u \|_{1,h}.
    \end{align}
    
    \ifthenelse{\boolean{thesis}}{
    In the last step we have applied stability of the discrete form from Lemma \ref{lemma: stability}. Then, using the triangle inequality, approximation properties of both the interpolation and the $L^2$ projection, it holds that 
    \begin{align*}
        | \globalInterpolation z - \ltwo_{\polOrder} z |_{2,h} 
        &\leq   
        | \globalInterpolation z - z |_{2,h} + | z - \ltwo_{\polOrder} z |_{2,h} 
        \lesssim
        h \| z \|_{3,\Omega} 
        \lesssim
        h \| u - \ellipticProj u \|_{1,h}
        \intertext{and}
        | \ellipticProj u - \ltwo_{\polOrder} u |_{2,h} 
        &\leq
        | \ellipticProj u - u |_{2,h} + |u - \ltwo_{\polOrder} u |_{2,h} 
        \lesssim h^{\polOrder-1},
    \end{align*}
    therefore 
    \begin{align}
        T_2 \lesssim h^{\polOrder} \| u - \ellipticProj u \|_{1,h}.
    \end{align}
    }
    {}

    For the final term, $T_3$, we note that due to the fact that the gradient projection is exact for polynomials, it holds that $r_h(u;\ltwo_{\polOrder}u,\ltwo_{\polOrder}z) =r(u;\ltwo_{\polOrder}u,\ltwo_{\polOrder}z)$. Therefore,
    \begin{align}
        T_3 = \ & \left| r_h(u;\ellipticProj u,\globalInterpolation z) - r(u;\ellipticProj u,\globalInterpolation z) \right|
        \nonumber
        \\
        = \ & 
        | r_h(u;\ellipticProj u - \ltwo_{\polOrder} u,\globalInterpolation z - \ltwo_{\polOrder} z) 
        + r_h(u;\ltwo_{\polOrder} u,\globalInterpolation z) 
        + r_h(u;\ellipticProj u, \ltwo_{\polOrder} z)
        \nonumber
        \\
        &- r_h (u; \ltwo_{\polOrder} u, \ltwo_{\polOrder} z)
        +
        r(u;\ltwo_{\polOrder}u,\ltwo_{\polOrder} z)
        \nonumber
        \\ 
        &- r(u;\ellipticProj u - \ltwo_{\polOrder}u, \globalInterpolation z - \ltwo_{\polOrder} z) - r(u;\ltwo_{\polOrder} u,\globalInterpolation z) - r(u;\ellipticProj u, \ltwo_{\polOrder} z) 
        |
        \nonumber
        \\
        \leq \ &
        \left| r_h(u;\ellipticProj u - \ltwo_{\polOrder} u,\globalInterpolation z - \ltwo_{\polOrder} z)
        - r(u;\ellipticProj u - \ltwo_{\polOrder}u, \globalInterpolation z - \ltwo_{\polOrder} z) \right|
        \label{eqn: first T_3}
        \\
        &+ \left| r_h(u;\ltwo_{\polOrder} u,\globalInterpolation z) - r(u;\ltwo_{\polOrder} u,\globalInterpolation z) \right|
        \label{eqn: second T_3}
        \\
        &
        + \left| r_h(u;\ellipticProj u, \ltwo_{\polOrder} z) - r(u;\ellipticProj u, \ltwo_{\polOrder} z) \right|. 
        \label{eqn: third T_3}
    \end{align}
    The first term \eqref{eqn: first T_3} can be treated in the same way as $T_2$ and we can show that 
    \begin{align}\label{eqn: t3,1}
        | r_h(u;\ellipticProj u &- \ltwo_{\polOrder} u,\globalInterpolation z - \ltwo_{\polOrder} z)
        - 
        r(u;\ellipticProj u - \ltwo_{\polOrder}u, \globalInterpolation z - \ltwo_{\polOrder} z) |
        \nonumber
        \\
        &\lesssim
        \| \phi^{\prime} (u) \|_{L^{\infty}} | \ellipticProj u - \ltwo_{\polOrder} u |_{1,h} |\globalInterpolation z - \ltwo_{\polOrder} z|_{1,h}
        \lesssim
        h^{\polOrder} \| u- \ellipticProj u\|_{1,h}.
    \end{align}

    For the next term \eqref{eqn: second T_3}, since $\ltwo_{\polOrder} u \in \prob_{\polOrder}(K)$, the stabilization part of $r_h$ vanishes and so we have the following 
    \begin{align}\label{eqn: t3,2}
        \left| r_h(u;\ltwo_{\polOrder} u,\globalInterpolation z) \right. - \left. r(u;\ltwo_{\polOrder} u,\globalInterpolation z) 
        \right|
        = \ &
        \left|
        \sum_{K \in \mesh} \int_{K} \phi^{\prime} (u) \, \big( \ltwo_{\polOrder-1} ( \nabla \ltwo_{\polOrder} u) \cdot \ltwo_{\polOrder-1} \nabla \globalInterpolation z 
        - \nabla \ltwo_{\polOrder} u \cdot \nabla \globalInterpolation z 
        \big)
        \, \dx 
        \right|
        \nonumber
        \\
        = \ &
        \left|
        \sum_{K \in \mesh} \int_{K} \phi^{\prime} (u) \,  (\nabla \ltwo_{\polOrder} u) \cdot ( \ltwo_{\polOrder-1} - I ) \nabla \globalInterpolation z 
        \, \dx
        \right|
        \nonumber
        \\  
        = \ &
        \left|
        \sum_{K \in \mesh} \int_{K} ( \ltwo_{\polOrder-1} - I )  (\phi^{\prime} (u) \nabla \ltwo_{\polOrder} u ) \cdot ( \nabla \globalInterpolation z - \ltwo_{0} \nabla  z)
        \, \dx
        \right|
        \nonumber
        \\
        \leq \ &
        \sum_{K \in \mesh} 
        \|  (\ltwo_{\polOrder-1} - I )  \phi^{\prime} (u)  \nabla \ltwo_{\polOrder} u  \|_{0,K} \| \nabla \globalInterpolation z - \ltwo_{0} \nabla z \|_{0,K}
        \nonumber
        \\
        \lesssim \ &
        h^{\polOrder-1} h \| z \|_{3,\Omega}
        \lesssim h^{\polOrder} \| u - \ellipticProj u \|_{1,h}.
    \end{align}

    \ifthenelse{\boolean{thesis}}{
    Note that 
    \begin{align*}
        \| (I- \ltwo_{\polOrder-1}) ( \phi^{\prime}(u) \nabla \ltwo_{\polOrder} u ) \|_{0,K} 
        \leq 
        h^{\polOrder-1} | \phi^{\prime}(u) \nabla \ltwo_{\polOrder} u |_{\polOrder-1,K}
        &\lesssim
        h^{\polOrder-1}
    \end{align*}
    and therefore combined with \eqref{eqn: h from z} and \eqref{eqn: elliptic regularity}, it follows that 
    \begin{align}
        \left|
        r_h(u;\ltwo_{\polOrder} u,\globalInterpolation z) 
        \right. 
        &- \left. r(u;\ltwo_{\polOrder} u,\globalInterpolation z)  \right| 
        \lesssim 
        h^{\polOrder} \| u - \ellipticProj u \|_{1,h}.
    \end{align}
    }
    {}

    For the final term in $T_3$, \eqref{eqn: third T_3}, we again note that since $\ltwo_{\polOrder} z \in \prob_{\polOrder}(K)$, the stabilization part of $r_h$ vanishes, and so 
    \begin{align}\label{eqn: t3 modified}
        | r_h(u;\ellipticProj u, \ltwo_{\polOrder} z) &- r(u;\ellipticProj u, \ltwo_{\polOrder} z) |
        =
        \left|
        \sum_{K \in \mesh} \int_K \phi^{\prime}(u) \left( 
            \ltwo_{\polOrder-1} \nabla \ellipticProj u \cdot \ltwo_{\polOrder-1} \nabla \ltwo_{\polOrder} z 
            - 
            \nabla \ellipticProj u \cdot \ltwo_{\polOrder} z \right)
            \, \dx 
        \right|
        \nonumber
        \\
        = \ &
        \left|
        \sum_{K \in \mesh} \int_K \phi^{\prime}(u) 
            ( \ltwo_{\polOrder-1} - I) \nabla \ellipticProj u 
            \cdot (\nabla \ltwo_{\polOrder} z )
            \, \dx 
        \right|
        \nonumber
        \\
        = \, & 
        \left|
        \sum_{K \in \mesh} \int_K 
        \phi^{\prime}(u) \big( 
            (I - \ltwo_{\polOrder-1} ) ( \nabla u - \nabla \ellipticProj u ) - (I-\ltwo_{\polOrder-1} ) \nabla u 
            \big) \cdot (\nabla \ltwo_{\polOrder} z)
        \, \dx
        \right|
        \nonumber
        \\
        \leq \, & 
        \sum_{K \in \mesh}  \| \phi^{\prime} (u) \|_{L^{\infty}} h^{1} | \nabla u - \nabla \ellipticProj u |_{1,K}  \| \nabla \ltwo_{\polOrder} z \|_{0,K}
        \nonumber
        \\
        &+
        \sum_{K \in \mesh} 
        \| \phi^{\prime}(u) \|_{L^{\infty}} h^{\polOrder-1+1} | \nabla u |_{\polOrder,K} \| \nabla \ltwo_{\polOrder} z \|_{0,K} 
        \nonumber
        \\
        \lesssim \ &
        h^{\polOrder} \| u - \ellipticProj u \|_{1,h},
    \end{align}
    where we have used \eqref{eqn: elliptic proj in 2norm}, \eqref{eqn: elliptic regularity}, and stability of the $L^2$ projection in the last step. Hence, by combining the estimates from \eqref{eqn: term T_1}, \eqref{eqn: term T2 bound}, \eqref{eqn: t3,1}, \eqref{eqn: t3,2}, and \eqref{eqn: t3 modified}, it holds that
    \begin{align*}
        | b(u-\ellipticProj u,\globalInterpolation z) | \leq T_1 + T_2 + T_3 \lesssim h^{\polOrder} \| u-\ellipticProj u \|_{1,h}.
    \end{align*}
    This concludes the proof.
\end{proof}

Next, we give the proof of Lemma \ref{lemma: extra terms}, necessary for the proof of the estimates \eqref{eqn: elliptic proj time in 2 norm}-\eqref{eqn: elliptic proj time in 1 norm} in Lemma \ref{lemma: elliptic proj bounds}.
\begin{proof}[Proof of Lemma \ref{lemma: extra terms}]
    Using Lemma \ref{eqn: projections and l2 projection result}, since $\ellipticProj u, \eta_h \in \vemSpace$, we can show the following
    \begin{align*}
        | (\phi^{\prime \prime}(u) u_t \, &\gradProj \ellipticProj u,\gradProj \eta_h )_{0,h} 
        - 
        (\phi^{\prime \prime}(u) u_t \nabla u,\nabla \eta_h)_{0,h} |
        \\
        = \ &
        \left|
        \sum_{K \in \mesh} \int_{K} \phi^{\prime \prime} (u) u_t \big( \ltwo_{\polOrder-1} \nabla \ellipticProj u \cdot \ltwo_{\polOrder-1} \nabla \eta_h
        -
        \nabla u \cdot \nabla \eta_h
              \big)    
              \, \dx 
        \right|
        \\
        = \ &
        \left| 
        \sum_{K \in \mesh} \int_{K} 
        \Big( \ltwo_{\polOrder-1} 
        \big( \phi^{\prime \prime} (u) u_t \ltwo_{\polOrder-1} \nabla \ellipticProj u \big) 
        -
        \phi^{\prime \prime} (u) u_t
        \nabla u \Big)
        \cdot \nabla \eta_h 
        \, \dx 
        \right|.
    \intertext{We now introduce the constant projection of the gradient of $\eta_h$, $\ltwo_0 (\nabla \eta_h)$ and see that}
    | (\phi^{\prime \prime}(u) u_t \, &\gradProj \ellipticProj u,\gradProj \eta_h )_{0,h} 
        - 
        (\phi^{\prime \prime}(u) u_t \nabla u,\nabla \eta_h)_{0,h} |
        \\
        \leq \ & 
        \left|
        \sum_{K \in \mesh} \int_{K} 
        \Big( \ltwo_{\polOrder-1} 
        \big\{ \phi^{\prime \prime} (u) u_t \ltwo_{\polOrder-1} \nabla \ellipticProj u 
        -
        \phi^{\prime \prime} (u) u_t
        \ltwo_{\polOrder-1} \nabla u \big\} \Big)
        \cdot \nabla \eta_h 
        \, \dx 
        \right|
        \\
        &+
        \left|
        \sum_{K \in \mesh} \int_{K} 
        \Big( ( \ltwo_{\polOrder-1} - I) \big( \phi^{\prime \prime} (u) u_t
        \ltwo_{\polOrder-1} \nabla u \big) 
        \Big)
        \cdot \big( \nabla \eta_h - \ltwo_0 (\nabla \eta_h) \big)
        \, \dx 
        \right|
        \\
        &+
        \left|
        \sum_{K \in \mesh} \int_{K} 
        \Big( 
        \phi^{\prime \prime}(u) u_t \ltwo_{\polOrder-1} \nabla u
        -
        \phi^{\prime \prime}(u) u_t \nabla u
        \Big)
        \cdot \nabla \eta_h 
        \, \dx 
        \right|
        \\
        \leq \ & 
        \sum_{K \in \mesh} 
        \| \phi^{\prime \prime} (u) u_t 
        \ltwo_{\polOrder-1} \big( \nabla \ellipticProj u 
        -
        \nabla u \big)
        \|_{0,K} \| \nabla \eta_h \|_{0,K}
        \\
        &+ 
        \sum_{K \in \mesh} h^{\polOrder-1} |\phi^{\prime \prime} (u) u_t
        \ltwo_{\polOrder-1} \nabla u|_{\polOrder-1} \| \nabla \eta_h - \ltwo_{0} (\nabla \eta_h) \|_{0,K} 
        \\
        &+ 
        \sum_{K \in \mesh} 
        \| \phi^{\prime \prime} (u) u_t \|_{L^{\infty}} 
        \| \ltwo_{\polOrder-1} \nabla u - \nabla u \|_{0,K} \| \nabla \eta_h \|_{0,K}
        \\
        \lesssim \ & 
        h^{\polOrder} \| \eta_h \|_{2,h},
    \end{align*}
where we have used stability of the $L^2$ projection and \eqref{eqn: elliptic proj in 1norm}.
\end{proof}

Lastly, we give the proof of Lemma \ref{lemma: boundary term bound}, which is necessary for the proof of $L^2$ convergence in Theorem \ref{thm: L2 convergence}.
\begin{proof}[Proof of Lemma \ref{lemma: boundary term bound}]
    Recall that for any $w_h \in \vemSpace \subset \HTwoNCSpace$, and for any edge $e \in \mathcal{E}_h$, the following properties hold,
    \begin{alignat}{3}
        \int_e [ w_h ] p \ \ds &= 0 \quad &&\forall p \in \prob_{\polOrder-3}(e),&&
        \label{eqn: jump w_h}
        \\
        \int_e [ \partial_n w_h ] p \ \ds &= 0 \quad &&\forall p \in \prob_{\polOrder-2}(e).&&
        \label{eqn: jump normal}
    \end{alignat}
    Observe that the following holds
    \begin{align*}
        \left| \sum_{K \in \cT_h} \int_{\partial K} (\partial_n z_h) w_h \, \ds \right|
        &= \left| \sum_{e \in \mathcal{E}_h} \int_e \big( \{ w_h \}[ \partial_n z_h ] + \{\partial_n z_h \} [w_h] \big) \, \ds
        \right|
        \leq
        A_{I} + A_{II}
    \end{align*}
    where we use $\{ \cdot \}$ to denote the average of a function $v$, $ \{ v \} := \half (v|_{K^+} + v|_{K^-})$ for any interior edge $e \subset \partial K^{+} \cap \partial K^{-}$. For a boundary edge $e \in \mathcal{E}_h^{\text{bdry}}$ we define $\{ v \} := v|_e$.

    Using \eqref{eqn: jump normal}, for the first term $A_{I}$, it holds that
    \begin{align*}
        A_{I} = \left|  \sum_{e \in \mathcal{E}_h} \int_{e}  \{ w_h \}  [ \partial_n z_h ]\, \ds \right| 
        &\leq 
        \Big| \sum_{e \in \mathcal{E}_h} \int_{e} ( \{ w_h \} - \cP_0^e \{ w_h \} ) [  \partial_n z_h ] \, \ds \Big|
        \\
        &\leq 
        \Big| \sum_{e \in \mathcal{E}_h } \int_{e} ( \{ w_h \} - \cP_0^e \{ w_h \} ) ( [  \partial_n z_h ] - \cP_0^e [\partial_n z_h] ) \, \ds \Big| 
        \\
        &\lesssim
        h^{1-\half} |w_h|_{1,h}
        h^{1-\half} | z_h |_{2,h}
        \\
        &\lesssim
        h|w_h|_{1,h} |z_h|_{2,h}.
    \end{align*}

    For the second term $A_{II}$, we follow the method used in \cite{antonietti_fully_2018}. In view of Remark~\ref{assump: star shaped wrt a ball}, for each edge we define the linear Lagrange interpolant $I^1_{T(e)}$ of $z_h$ on the sub-triangle $T(e)$, made from connecting the interior point $x_K$ to the endpoints of the edge $e$. 
    Since $z_h \in \vemSpace$, it also satisfies $z_h \in H^2(K)$, and so we can build the interpolant by using the values at the vertices of $T(e)$. 
    Further, $z_h$ is continuous at the endpoints of $e$ and so 
    \begin{align*}
        [ I^1_{T(e)} (w_h) ] |_{e} = 0.
    \end{align*}

    Therefore, by using standard interpolation estimates and a trace inequality, we see that
    \begin{align*}
        A_{II} = \left| \sum_{ e \in \mathcal{E}_h} \int_{e} \{ \partial_n z_h \} [ w_h ] \, \ds  
        \right|
        &= 
        \left|
        \sum_{e \in \mathcal{E}_h} \int_{e} \{ \partial_n z_h \} \big( [w_h] - [I^1_{T(e)} (w_h)] \big) \, \ds \right|
        \lesssim h | z_h |_{2,h} |w_h|_{2,h},
    \end{align*}
    hence \eqref{eqn: bdry lemma} holds, as required.
\end{proof}

\section*{Data Availability}
Enquiries about data availability should be directed to the authors.

\section*{Acknowledgements}
The authors would like to acknowledge the University of Warwick Scientific Computing Research Technology Platform for assistance in the research described in this paper.

\section*{Declarations}
The authors have no competing interests to declare that are relevant to the content of this article.

\bibliographystyle{acm}
\bibliography{VEMCahnHilliard}

\end{document}